\newtheorem{tma}{Theorem}[section]
\newtheorem{prop}[tma]{Proposition}
\newtheorem{lema}[tma]{Lemma}
\newtheorem{cor}[tma]{Corollary}
\newtheorem{obs}[tma]{Remark}
\newenvironment{proof}{\text{\bf Proof: }}{}
\newcommand{\R}{\mathbb{R}}
\newcommand{\C}{\mathbb{C}}
\newcommand{\B}{\mathcal{B}}
\newcommand{\p}{\partial}
\newcommand{\E}{\mathbb{E}}
\newcommand{\w}{\omega}
\renewcommand{\P}{\mathbb{P}}
\newcommand{\beq}{\begin{equation}}
\newcommand{\eeq}{\end{equation}}
\newcommand{\beqn}{\begin{equation*}}
\newcommand{\eeqn}{\end{equation*}}
\begin{document}
%%%%%%%

\begin{titlepage}
\vskip 1cm

\begin{center}
{\Large\bf Systems of stochastic Poisson equations:}\\
{\Large\bf hitting probabilities}
\medskip

by\\
\vspace{14mm}

\begin{tabular}{l@{\hspace{10mm}}l@{\hspace{10mm}}l}
{\sc Marta Sanz-Sol\'e}$\,^{(a) (\ast)}$  &and&{\sc No\`elia Viles}$\,^{(\star)}$\\
{\small Facultat de Matem\`atiques i Inform\`atica}          &&{\small @DiMedia, Actividades Digital Media}\\
{\small Universitat de Barcelona }  &&{\small Avinguda Diagonal, 477}\\
{\small Gran Via de les Corts Catalanes, 585}                                      &&{\small 08036 Barcelona, Spain}\\
{\small 08007 Barcelona, Spain}        &&  {\small e-mail:noelia.viles@gmail.com } \\           
{\small e-mail: marta.sanz@ub.edu}       &&\\
{\small http://www.ub.edu/plie/Sanz-Sole} &&  \null 
\end{tabular}

\vskip 1cm
%\today.

\end{center}

\vskip 1cm

\noindent{\em Abstract.} We consider a $d$-dimensional random field $u=(u(x), x\in D)$ that solves a system of elliptic stochastic equations on a bounded domain $D\subset \R^k$, with additive white noise and spatial dimension $k=1,2,3$. Properties of $u$ and its probability law are proved.
For Gaussian solutions, using results from \cite{DSS2010}, we establish upper and lower bounds on hitting probabilities in terms of the Hausdorff measure and Bessel-Riesz capacity, respectively. This relies on precise estimates on the canonical distance of the process or, equivalently, on $L^2$ estimates of increments of the Green function of the Laplace equation.
 \medskip

\noindent{\em Keywords:} Systems of stochastic Poisson equations; hitting probabilities, capacity; Hausdorff measure.
\smallskip

\noindent{\em AMS Subject Classification.}
{\sl Primary:} 60H15, 60G15, 60J45; {\sl Secondary:} 60G60, 60H07.
\vfill

\noindent
\footnotesize
\begin{itemize}
\item[(a)] Corresponding author.
\item[$^{(\ast)}$] Supported by the grants MTM 2012-31192, MTM 2015- 65092-P, from the \textit{Direcci\'on Gene-\break ral de
Investigaci\'on, Ministerio de Educaci\'on y Ciencia}, Spain.
\item[$^{(\star)}$] Supported by the grant MTM 2012-31192  from the \textit{Direcci\'on General de Investigaci\'on, Mi-\break nisterio de Econom\'{\i}a y Competitividad}, Spain. This work was partially done during a postdoctoral period at the University of Barcelona, supported by the {\em Juan de la Cierva} contract JCI-2011- 08840.
\end{itemize}

\end{titlepage}
\newpage
%%%%%%%%%

% ------------------------------------------------------------------------
% ------------------------------------------------------------------------
%\author{M. Sanz-Sol\'{e} \thanks{Facultat de Matem\`{a}tiques, Universitat de Barcelona Gran Via 585, 08007 Barcelona, Spain; marta.sanz@ub.edu}\\ \and
%N. Viles \thanks{Facultat de Matem\`{a}tiques, Universitat de
%Barcelona Gran Via 585, 08007 Barcelona, Spain; nviles@ub.edu}\\}
%\date{\today}
%\title{Systems of stochastic Poisson equations: hitting probabilities}
%\maketitle

\section{Introduction}
\label{si}
Let $D$ be a bounded domain of $\R^k$, $k=1,2,3$, for which the divergence theorem holds.
Consider the following system of elliptic stochastic partial differential equations,
\begin{align}
\label{Poisseq}
&-\Delta u^i(x)+f^i(u(x))=g^i(x)+\sum_{j=1}^d\sigma_{ij}\dot{W}^j(x),\  x\in D ,\ i=1,\dots, d,\notag\\
 &u_{|\p D} (x)=0,
  \end{align}
  where $\dot W = (\dot{W}^j, j=1,\ldots,d)$ denotes a $d$-dimensional white noise indexed by $x\in\R^k$, $f: \R^d \rightarrow \R^d$, $g: D\rightarrow \R^d$, and $\sigma = (\sigma_{ij})_ {1\le i,j\le d}$ is a non-singular matrix with real-valued entries. 
  
 The main motivation of this paper has been to find upper and lower bounds for the hitting probabilities $\P\{u(I)\cap A\ne \emptyset\}$, $I\subset D$, $A\subset \R^d$, in terms of the Hausdorff measure and the capacity of the set $A$, respectively. This is a fundamental problem in probabilistic potential theory that, in the context of stochastic partial differential equations, has been extensively studied for the stochastic heat and wave equations. We refer the reader to  \cite{DN2004}, \cite{DKN2009}, \cite{DSS2012}, and references herein, for a representative sample of results.
 
 For $d=1$, equations like \eqref{Poisseq} have been first considered in \cite{BP1990} and then in \cite{DM1992}, in relation with the study of the Markov field property of the solution, and in \cite{GM2006}, \cite{MSS2006}, \cite{SST2009}, for numerical approximations, among others.  
 We observe that in \eqref{Poisseq}, the stochastic forcing is an {\em additive} noise. Therefore, in the integral formulation of the system given in \eqref{mildPoisseq}, the stochastic integral term contains a deterministic integrand and defines a Gaussian process. Since there is no time parameter in \eqref{Poisseq}, considering a {\em multiplicative} noise would require a choice of anticipating stochastic integral in \eqref{mildPoisseq}. For example, one could take the Skorohod integral. This would make the objective of this article difficult and rather speculative. 
 
The content of the paper is as follows. In Section \ref{s1}, we prove the existence and uniqueness of a solution to \eqref{Poisseq}, when the function $f$ satisfies a monotonicity condition (see Theorem \ref{thm31}). This is a $d$-dimensional stochastic process indexed by $\bar D$, the closure of the domain $D$, with continuous sample paths and vanishing at the boundary of $D$, a.s. The proof applies standard methods of the theory of nonlinear monotone operators. In order to make the article self-contained, we include the details of the proof. In Section \ref{s2}, we prove some properties of the solution to \eqref{Poisseq}. With the a priori bound proved in Proposition \ref{up}, we prove that the solution lies in $L^p(\Omega;\R^d)$, uniformly in $x\in D$.
Moreover, by using estimates of increments of the $L^2$-norms of the Green function, we prove that the sample paths of the solution are H\"older continuous. Section 3 is devoted to study some aspects of the law of the solution. The integral formulation \eqref{mildPoisseq} suggests that the
law of $u$ is obtained from a Gaussian process by a non adapted shift. By applying Kusuoka's anticipating extension of Girsanov's theorem (see \cite{K1982}) we obtain the absolute continuity of the law of $u$ with respect to a Gaussian measure. As a trivial by-product, for any $x\in D$, the law of $u(x)$ is absolutely continuous with respect to the Lebesgue measure in $\R^d$. For $d=1$, this result has been proved in \cite{DM1992}.

In the remaining of the article, it is assumed that $f\equiv 0$, therefore focusing on Gaussian solutions. For the sake of simplicity, we also assume $g\equiv 0$. Probabilistic potential theory for Gaussian processes has been the object of extensive work. The more recent developments are on anisotropic random fields with the paradigmatic example of fractional Brownian sheets. Solutions to SPDEs, like the stochastic heat equation, belong to this class. In \cite{X2009}, different results relative to sample paths of anisotropic Gaussian random fields are presented, in particular on hitting probabilities, and a exhaustive number of references are given. The Gaussian process obtained from \eqref{mildPoisseq} with $f=g=0$, provides yet another example of random field for which, to the best of our acknowledge, results on hitting probabilities have not been obtained before.

Criteria for hitting probabilities of general random fields have been established in \cite{DSS2010}. When implemented on Gaussian processes, two fundamental ingredients are needed. The first one concerns the canonical distance $\delta$ on values of the Gaussian process at two different points $x, z$, which is required to commensurate with a pseudo-distance on the parameter set of the process (see \eqref{metric} for the definition of $\delta$). The second ingredient is the property of two-point local non determinism (see conditions (C1), (C2) in \cite[Section 2]{BLX2009}, \cite[p. 158]{X2009}). For random fields obtained from solutions to SPDEs, both conditions are intimately connected with upper and lower bounds of increments of $L^2$--norms of the Green function or the fundamental solution (see \cite{DKN2007}, \cite[Secction 4]{DSS2010} for the stochastic heat and wave equations, respectively).
For the Green function of the Laplace's equation, Section \ref{sa} provides the necessary results. We prove that in spatial dimension $k=1, 3$, those norms of increments commesurates with $|x-z|$ and $|x-z|^{1/2}$, respectively, while for $k=2$ there is a gap (see Lemmas \ref{l-sa-2}, \ref{l-sa-4}, \ref{l-sa-3}, respectively). By applying these results, we establish in Section \ref{ss3-3} upper and lower bounds for the hitting probabilities of the Gaussian process defined in \eqref{s3-11} in dimension $k=1,3$, and upper bounds in dimension $k=2$. Our investigations led to the conclusion that the gap in the estimates of Lemma \ref{l-sa-3} implies that for $k=2$ the two-point local non determinism fails to be true, although we do not have a proof of this statement. 

We end this section with some remarks on a possible extension of our results to $f\neq 0$. A natural approach, inspired by \cite{DKN2007}, consists in applying Proposition \ref{p2-s2-00}. By doing so, from hitting probabilities estimates on the Gaussian solution, and moment estimates of
 the random density \eqref{s2-0-10}, upper and lower bounds on hitting probabilities for $u$ could be obtained. Assume that $f(x) = ax+b$. Then, the $\det_2$ factor on the right-hand side of \eqref{s2-0-10} is constant, and the exponential factor involves the random variable $\delta(f(\omega))$ that belongs to a second order Wiener chaos. With a constraint on the size of the constant $a$, moments up to a certain order of the random density do exist, and the above strategy works well. However, we believe that going beyond that particular case would need new ideas.

%%%%%%%%%Section 1
%%%%%%%%%
%%%%%%%%%

\section{Existence and uniqueness of solution to the system of elliptic equations}
\label{s1}

This section is devoted to establish the existence and uniqueness of solution to the system of elliptic equations \eqref{Poisseq}.

We begin with recalling the expression of the Green function associated to the Laplace equation in dimensions $k=1,2,3$, that we denote by $G_D^k$.
%\beq\label{Poisseq}
% \left\{
 % \begin{array}{l }
%-\Delta u^i(x)+f^i(u(x))=g^i(x)+\sum_{j=1}^d\sigma_{ij}\dot{W}^j(x),\  x\in D ,\ i=1,\dots, d,\\
 %   u_{|\p D} (x)=0.
 % \end{array} \right.
 %\eeq
%Here $\dot W = (\dot{W}^j, j=1,\ldots,d)$ denotes a $d$-dimensional white noise indexed by $x\in\R^k$, $f: \R^d \rightarrow \R^d$, $g: D\rightarrow \R^d$, and $\sigma = (\sigma_{ij})_ {1\le i,j\le d}$ is a non-singular matrix with real-valued entries.

For $k=1$, we consider $D=(0,b)$, with $b>0$. In this case,
 \beq
 \label{GD1}
G_D^1(x,y)= x\wedge y -\frac{xy}{b}.
 \eeq
 For $k=2,3$, $D$ is an arbitrary domain with regular boundary, and then,
 \beqn
 G^k_D(x,y)=\Gamma^k(|x-y|)-\E_x(\Gamma^k(|B_{\tau}-y|), \; x,\, y\in D.
 \eeqn
 In this formula, $B_\tau$ is the random variable defined by a Brownian motion $B$ that starts from $x$ at time $t=0$, at the first time (denoted by $\tau$) it hits $\p D$, and
\beq
\label{sa-310}
%\label{genPoisseq}
\Gamma^k(z)=C_k \left\{
  \begin{array}{l }
\log|z|,\, k=2,\\
   |z|^{-1},\; k=3,
  \end{array} \right.
 \eeq
with $C_2=  \frac{1}{2\pi}$ and $C_3 = \frac{1}{4\pi}$ (see \cite{GT2001}). In dimensions $k=2,3$, and for $D= B_1(0)$, the unit ball centered at zero, we give in Section \ref{sa} an alternate formula for $G_D^k(x,y)$ (see \eqref{sa-31}.

The inner product in ${L^2(D;\R^d)}$ and its corresponding norm will be denoted by $\langle\cdot, \cdot \rangle$ and $\|\cdot\|$, respectively. 

For its further and frequent use, we quote a well-known result of the $L^2$--norm of $G_D^k$.

\begin{lema}\label{l-sa-1}
For $k=1$, we consider $D=(0,b)$, $b>0$, and for $k=2,3$, $D$ is an arbitrary bounded domain of $\R^k$ with regular boundary.
We have
\beq
\label{sa-1}
\sup_{x\in D}\|G_D^k(x,\cdot)\|< +\infty.
\eeq
\end{lema}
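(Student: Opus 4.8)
The plan is to estimate the $L^2(D)$-norm of $G_D^k(x,\cdot)$ uniformly in $x$ by treating the three dimensions separately, using in each case the explicit form of the Green function recalled above together with elementary integral bounds.

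For $k=1$ I would simply use the explicit formula \eqref{GD1}: for fixed $x\in(0,b)$ the function $y\mapsto G_D^1(x,y)$ is continuous, piecewise linear, nonnegative, and bounded by $x(1-x/b)\le b/4$; hence $\|G_D^1(x,\cdot)\|^2\le \int_0^b (b/4)^2\,dy = b^3/16$, which is finite and independent of $x$. (One can also compute the integral exactly, but the crude bound suffices.)

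For $k=2,3$ I would start from the decomposition $G_D^k(x,y)=\Gamma^k(|x-y|)-\E_x\big(\Gamma^k(|B_\tau-y|)\big)$ and use the maximum principle, which gives $0\le G_D^k(x,y)\le \Gamma^k(|x-y|)$ for $x,y\in D$ when $k=3$ (where $\Gamma^3>0$); for $k=2$ one instead uses that $G_D^2\ge 0$ together with the bound $G_D^2(x,y)\le C_2\big(\log(\operatorname{diam} D)-\log|x-y|\big)_+ + C$ coming from boundedness of $D$, i.e. $G_D^2(x,y)\le C\big(1+|\log|x-y||\big)$ on the bounded set $D$. It then remains to bound $\int_D |\Gamma^k(|x-y|)|^2\,dy$ uniformly in $x$. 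Enlarging $D$ to a ball $B_R(x)$ of fixed radius $R$ (possible since $D$ is bounded) and passing to polar coordinates, for $k=3$ one gets $\int_{B_R(x)} |x-y|^{-2}\,dy = c_3\int_0^R r^{-2}r^2\,dr = c_3 R<\infty$, and for $k=2$ one gets $\int_{B_R(x)}(1+|\log|x-y||)^2\,dy = c_2\int_0^R (1+|\log r|)^2 r\,dr<\infty$ since $r(\log r)^2$ is integrable near $0$; both bounds are clearly independent of $x$.

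The only mildly delicate point is justifying the pointwise bound $0\le G_D^k(x,y)\le |\Gamma^k(|x-y|)|$ (up to an additive constant in the case $k=2$) rigorously from the probabilistic representation — this is the standard fact that the harmonic-measure correction term $\E_x\Gamma^k(|B_\tau-y|)$ is the harmonic extension of the boundary values of $\Gamma^k(|\cdot-y|)$, so by the maximum principle $0\le G_D^k\le$ (singular part), and for bounded $D$ the boundary values, hence the correction term, are bounded; I would quote this from \cite{GT2001}. Everything else is a routine polar-coordinates computation, and there is no genuine obstacle: the singularity of $\Gamma^k$ at the origin is square-integrable in dimensions $k\le 3$, which is exactly why the statement holds in this range.
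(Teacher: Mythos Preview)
Your argument is correct and essentially coincides with the paper's approach for $k=1$: the paper computes the integral exactly as $\int_0^b G_D^1(x,y)^2\,dy=\frac{x^2(x-b)^2}{3b}$, whereas you use the crude pointwise bound $G_D^1(x,y)\le b/4$; either way one uses only the explicit formula \eqref{GD1}.

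For $k=2,3$ the paper gives no argument at all and simply cites \cite[Lemma 3.3]{DM1992}. Your proof is therefore more self-contained: you supply the standard maximum-principle bound $0\le G_D^k(x,y)\le |\Gamma^k(|x-y|)|$ (plus a harmless additive constant when $k=2$) and then observe that the singularity of $\Gamma^k$ is square-integrable in dimensions $k\le 3$, a polar-coordinates computation. This is presumably exactly what the cited lemma in \cite{DM1992} does, so the two proofs are aligned in substance; yours simply spells out what the paper outsources. The only caveat is the sign convention for $\Gamma^2$ in \eqref{sa-310}, which makes the inequality $G_D^2\le \Gamma^2$ read the wrong way as stated; but you already flag this by allowing an additive constant and quoting the boundedness of the harmonic correction on bounded $D$, which is the right fix.
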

\begin{proof}
Let $k=1$. Explicit computations based on the expression \eqref{GD1} yield
\beq
\label{sa-10}
\int_0^bG_D^1(x,y)^2dy=\frac{x^2(x-b)^2}{3b}.
\eeq
From this, one trivially gets \eqref{sa-1}.

For $k=2, 3$, the result is proved in \cite[Lemma 3.3]{DM1992}.
\hfill{$\square$}
\end{proof}

Following \cite{BP1990} (see also \cite{DM1992}, \cite{GM2006}, \cite{SST2009}), a stochastic process $u=\{u(x),\; x\in D\}$ satisfying
 \begin{align}
 \label{mildPoisseq}
u^i(x) + \int_DG_D^k(x,y)f^i(u(y))dy  &= \int_DG_D^k(x,y)g^i(y)dy\\
& +\sum_{j=1}^d\sigma_{ij}\int_DG_D^k(x,y) {W}^j(dy),\ i=1,\dots, d,
\end{align} 
a.s. for all $x\in D$, is called a {\sl mild} solution of \eqref{Poisseq}.

We define the (Nemytskii type) operator $\operatorname{F}:L^2(D;\R^d) \longrightarrow L^2(D;\R^d)$ by $F(v)(y)= f(v(y))$, for any $v\in L^2(D;\R^d)$, $y\in \R^d$, and introduce the following assumptions.
\begin{enumerate}
\item[\textbf{(C)}]  $\operatorname{F}$ is strongly continuous and bounded.
\item [\textbf{(M)}] {\bf Monotonicity.} There exists $L>0$ such that for any $u,v\in L^2(D;\R^d)$,
\begin{equation*}
\langle u-v, \operatorname{F}(u)-\operatorname{F}(v)\rangle\geq -L\|u-v\|^2.
\end{equation*}
\end{enumerate}

For its further use, we recall a consequence of Poincar\'e's inequality:\\
\textbf{(P)} There exists a constant $a>0$ such that for any $\varphi \in L^2(D;\R^d)$ , 
\beqn
\left\langle \int_D G_D^k(\cdot,y)\varphi(y)dy,\varphi\right\rangle\ge a \left\| \int_D G_D^k(\cdot,y)\varphi(y) dy\right\|^2.
\eeqn

In the sequel, we denote by $\mathcal{C}(\overline{D};\R^d)$ the space of continuous functions on $\overline{D}$ and set 
$\mathcal{S}=\{\w:\; \w\in \mathcal{C}(\overline{D};\R^d),\; \w|_{\p D}=0\}$. The result on existence and uniqueness of solution for \eqref{mildPoisseq} reads as follows.

\begin{tma}\label{thm31}
Assume that
\begin{enumerate}[(i)]
\item $g\in L^2(D;\R^d)$;
\item $\operatorname{F}$ satisfies the properties \textbf{(C)} and (\textbf{M});
\item the constants $L$ and $a$ in (\textbf{M}) and (\textbf{P}) respectively, satisfy $0<L<a$. 
\end{enumerate}
Then, the system of equations \eqref{mildPoisseq} has a unique solution $\{u(x), x\in D\}$ in $\mathcal{S}$.
\end{tma}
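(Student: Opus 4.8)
The plan is to treat the mild equation \eqref{mildPoisseq} as a fixed-point problem for a nonlinear monotone operator on a Hilbert space, solve it first in $L^2(D;\R^d)$ via the standard Browder–Minty machinery, and then bootstrap the $L^2$ solution into $\mathcal{S}$ using regularity properties of the Green operator. Writing $v^i = u^i$ and abbreviating by $\mathcal{G}\varphi(x) = \int_D G_D^k(x,y)\varphi(y)\,dy$ the Green operator (applied componentwise), the equation \eqref{mildPoisseq} reads $u + \mathcal{G} F(u) = \mathcal{G}g + \mathcal{G}(\sigma \dot W)$, where the last term is, for a.e.\ fixed $\omega$, a well-defined element of $L^2(D;\R^d)$ by Lemma \ref{l-sa-1} (the stochastic integral $\int_D G_D^k(x,y)W^j(dy)$ is a Gaussian field whose $L^2(D)$-norm has finite expectation, hence is a.s.\ in $L^2(D)$). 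So for a.e.\ $\omega$ fixed it suffices to solve the deterministic equation $u + \mathcal{G}F(u) = h$ with $h = \mathcal{G}(g + \sigma\dot W(\omega)) \in L^2(D;\R^d)$.

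First I would fix $\omega$ and set up the operator $T: L^2(D;\R^d) \to L^2(D;\R^d)$, $T(u) = u + \mathcal{G}F(u)$, and show $T$ is continuous, bounded, coercive, and strictly monotone, so that $T$ is onto by the Browder–Minty theorem. Continuity and boundedness follow from hypothesis \textbf{(C)} together with the continuity of $\mathcal{G}: L^2 \to L^2$ (which again uses Lemma \ref{l-sa-1}, or just Poincaré). For monotonicity, for $u_1, u_2 \in L^2$ I compute
\begin{align*}
\langle T(u_1) - T(u_2), u_1 - u_2\rangle
&= \|u_1 - u_2\|^2 + \langle \mathcal{G}(F(u_1) - F(u_2)), u_1 - u_2\rangle.
\end{align*}
Here the self-adjointness and positivity of $\mathcal{G}$ let me write $\langle \mathcal{G}\psi, \psi\rangle \ge a\|\mathcal{G}\psi\|^2$ via property \textbf{(P)}; combining this with the monotonicity assumption \textbf{(M)} applied to $F$, and a Cauchy–Schwarz/Young inequality to absorb the cross term, the condition $0 < L < a$ in hypothesis (iii) is exactly what makes the whole expression bounded below by $c\|u_1 - u_2\|^2$ for some $c>0$, giving strict monotonicity and coercivity simultaneously. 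Strict monotonicity gives uniqueness. This yields, for a.e.\ $\omega$, a unique $u(\omega,\cdot) \in L^2(D;\R^d)$ solving \eqref{mildPoisseq}.

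Next I would upgrade from $L^2$ to $\mathcal{S}$. Given the $L^2$ solution $u$, the right-hand side $h - \mathcal{G}F(u)$ is of the form $\mathcal{G}\varphi$ with $\varphi \in L^2(D;\R^d)$ (here $F(u)$ is bounded in $L^2$ by \textbf{(C)}, and the stochastic term $x \mapsto \int_D G_D^k(x,y)W^j(dy)$ has a continuous modification vanishing on $\partial D$ — this follows from the $L^2$-increment estimates of the Green function quoted later in Section \ref{sa}, giving Hölder continuity via Kolmogorov's criterion, plus $G_D^k(x,\cdot)\to 0$ as $x\to\partial D$). Since $u = \mathcal{G}\varphi$ pointwise a.e., elliptic regularity / properties of the Newtonian potential in dimension $k \le 3$ show $\mathcal{G}\varphi \in \mathcal{C}(\overline D;\R^d)$ with $\mathcal{G}\varphi|_{\partial D} = 0$, so $u$ has a version in $\mathcal{S}$; uniqueness in $\mathcal{S}$ follows from uniqueness in $L^2$. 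Finally one checks measurability in $\omega$ of the solution, which follows from the iterative/approximation construction in Browder–Minty (e.g.\ Galerkin approximations are measurable and converge).

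\textbf{The main obstacle} I expect is not the abstract surjectivity argument — that is textbook monotone operator theory — but the interplay of the two "signs" in the monotonicity estimate: the operator $F$ need only be monotone up to the defect $-L\|\cdot\|^2$, while the smoothing operator $\mathcal{G}$ contributes a *good* term $a\|\mathcal{G}\psi\|^2$ of a different homogeneity. Making the cross term $\langle \mathcal{G}(F(u_1)-F(u_2)), u_1-u_2\rangle$ genuinely controlled — rather than merely formally bounded — requires using that $\mathcal{G}$ is a positive self-adjoint operator with a square root, rewriting everything in terms of $\mathcal{G}^{1/2}$, and then the condition $L < a$ is seen to be sharp. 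A secondary technical point is the passage to continuous sample paths for the Gaussian (stochastic-integral) part, which genuinely relies on the Green-function increment bounds developed only in the appendix, so there is a mild forward reference to reconcile.
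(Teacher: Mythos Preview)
Your plan has a real gap at the heart of the Browder--Minty step. You want to show that $T=I+\mathcal{G}F$ is (strictly) monotone on $L^2(D;\R^d)$, and you compute
\[
\langle T(u_1)-T(u_2),\,u_1-u_2\rangle
=\|w\|^2+\langle \mathcal{G}\psi,\,w\rangle,
\qquad w=u_1-u_2,\ \psi=F(u_1)-F(u_2).
\]
You then invoke property \textbf{(P)}, but \textbf{(P)} gives a lower bound for $\langle\mathcal{G}\psi,\psi\rangle$, not for the genuine cross term $\langle\mathcal{G}\psi,w\rangle$; likewise \textbf{(M)} controls $\langle\psi,w\rangle$, not $\langle\mathcal{G}\psi,w\rangle$. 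The vague reference to $\mathcal{G}^{1/2}$ and Cauchy--Schwarz/Young does not close this: after writing $\langle\mathcal{G}\psi,w\rangle=\langle\mathcal{G}^{1/2}\psi,\mathcal{G}^{1/2}w\rangle$ you have no control whatsoever on $\|\mathcal{G}^{1/2}\psi\|$, since $F$ is only bounded, not Lipschitz. In fact, for $d\ge 2$ one can produce (at the level of linearizations) a pointwise multiplier $M(y)$ with antisymmetric values---hence satisfying the infinitesimal form of \textbf{(M)} with $L$ arbitrarily small---for which $\|w\|^2+\langle\mathcal{G}(Mw),w\rangle<0$; so monotonicity of $T$ on $L^2$ cannot be expected to follow from \textbf{(M)}, \textbf{(P)}, and $L<a$ alone.

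The paper avoids exactly this trap in two ways. For injectivity it does \emph{not} prove monotonicity of $T$: assuming $Tu=Tv$ one has $w=-\mathcal{G}\psi$, and pairing with $\psi$ (not with $w$) gives $\langle w,\psi\rangle=-\langle\mathcal{G}\psi,\psi\rangle\le -a\|\mathcal{G}\psi\|^2=-a\|w\|^2$, which together with \textbf{(M)} forces $w=0$. For surjectivity, Browder--Minty is applied not to $T$ on $L^2$ but to $A=-\Delta+F$ on $W_0^{1,2}(D;\R^d)$: there the pairing is
\[
\langle Au-Av,\,u-v\rangle=\|\nabla(u-v)\|^2+\langle F(u)-F(v),\,u-v\rangle\ge (a-L)\|u-v\|^2,
\]
so Poincar\'e and \textbf{(M)} combine with no cross term to absorb. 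One then approximates the right-hand side $b\in\mathcal{S}$ by smooth $b_n$, solves $Au^n=b_n$, shows $(u^n)$ is Cauchy in $L^2$, and passes to the limit. Your bootstrap from $L^2$ to $\mathcal{S}$ and the treatment of the stochastic term are fine and match the paper; the issue is solely that the monotone-operator argument must be run on $-\Delta+F$ in $W_0^{1,2}$, not on $I+\mathcal{G}F$ in $L^2$.
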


\begin{proof}
For any $\w\in\mathcal{S}$, we set 
\begin{equation}\label{defTop}
\operatorname{T}(\w):= \w+\int_DG_D^k(\cdot, y) f(\w(y))dy.
\end{equation}
This defines an operator $\operatorname{T}:\mathcal{S} \longrightarrow \mathcal{S}$. Indeed, the function $x\longrightarrow \int_DG_D^k(x, y) f(\w(y))dy$ vanishes if $x\in \p D$. Fix $x,\bar x\in D$. 
Cauchy-Schwarz inequality yields
\begin{align*}
\left|\int_D[G_D^k(x,y)-G_D^k(\bar{x},y)]f(\w(y))dy\right| &\leq \|G_D^k(x,\cdot)-G_D^k(\bar{x},\cdot)\| \|f(\w)\|\\
&\leq C  \|G_D^k(x,\cdot)-G_D^k(\bar{x},\cdot)\|.
\end{align*}
Appealing to lemmas \ref{l-sa-2}, \ref{l-sa-3}, \ref{l-sa-4} we see that $x\longrightarrow \int_DG_D^k(\cdot, y) f(\w(y))dy$ belongs to $\mathcal{C}(\overline{D};\R^d)$ and in fact, it is H\"older continuous.

Let 
\beqn
b^i(x)=\int_D G_D^k(x,y)g^i(y)dy+\sum_{j=1}^d\sigma_{i,j}\int_DG_D^k(x,y) W^j(dy), \  i=1,\dots, d.
\eeqn
Clearly, $b^i(x)=0$ if $x\in\p D$. 
%The fact that $g\in L^2(D; \R^d)$ implies that $x\mapsto \int_D G(\cdot, y)g(y)dy \in \mathcal{S}$.  
The process $\left\{\int_DG_D^k(x,y) W^j(dy), x\in D\right\}$ has continuous sample paths, a.s. Indeed this follows from lemmas \ref{l-sa-2}, \ref{l-sa-3}, \ref{l-sa-4}, the hypercontractivity property and Kolmogorov's continuity criterion. The term $\int_D G_D^k(x,y)g^i(y)dy$ defines also a continuous function in $x$. This follows from Cauchy-Schwarz inequality and again, by applying lemmas \ref{l-sa-2}, \ref{l-sa-3}, \ref{l-sa-4}. Consequently, $b(x) = (b^i(x), i=1,\ldots,d)\in \mathcal{S}$.

We will prove that the operator equation $\operatorname{T}\w =b$ has a unique solution for any $b\in \mathcal{S}$, or equivalently that $\operatorname{T}$ is a bijective operator on $\mathcal{S}$.

For this, we start by checking that $\operatorname{T}$ is one-to-one. Fix $u,\, v\in \mathcal{S}$ and assume that $\operatorname{T} u =\operatorname{T} v$. Then 
\beq
\label{equvdiff}
u(x)-v(x)= - \int_D G_D^k(x,y) [f(u(y))-f(v(y))]dy.
\eeq
By taking the inner product in $L^2(D;\R^d)$ with $F(u)-F(v)$ on both sides of this equality, and applying (\textbf{P}), we obtain
\begin{align*}
\langle u-v, F(u)-F(v)\rangle &= - \left\langle \int_D G_D^k(\cdot, y)[f(u(y))-f(v(y))]dy, F(u)-F(v)\right\rangle
\\&\leq -a\left\|\int_D G_D^k(\cdot, y)[f(u(y))-f(v(y))]dy\right\|^2
\\&=-a\|u-v\|^2.
\end{align*}
 On the other hand, using the property $(\textbf{M})$, we have 
\beqn
\langle u-v, F(u)-F(v)\rangle\geq -L\|u-v\|^2.
\eeqn
Thus, 
\begin{equation*}
-L \|u-v\|^2\leq -a \|u-v\|^2,
\end{equation*}
or equivalently, 
\beqn
(a-L)\|u-v\|^2\leq 0.
\eeqn
Since $L<a$, this implies that $u=v$ in $\mathcal{S}$.

%%%%% T is onto
\medskip

Next, we prove that $\operatorname{T}$ is onto, proceeding in a way similar as in \cite{MSS2006}.

\noindent{\sl Step 1: A solution for a regular problem}. For a fixed $b\in \mathcal{S}$, we consider a sequence $(b_n)_{n\geq 1}\in \mathcal{C}_0^{\infty}(D;\R^d)$, such that $b_n\rightarrow b$ in $L^2(D;\R^d)$. Define $\operatorname{A} = -\Delta + F$ restricted to $W_0^{1,2}(D;\R^d)$. We will prove that 
for each $n\ge 1$, there exists $u_n\in\mathcal{S}$ such that $\operatorname{A} u^n=b^n$. 

We remind the classical result on solutions of nonlinear monotone operator equations (see, e.g. \cite[Theorem 26.A, page 557]{Z1990}): 

\noindent{\sl Let $X$ be a reflexive Banach space; denote by $X^*$ its topological dual. Let $B: X\rightarrow X^*$ be a strictly monotone, coercive, hemicontinuous operator. Then, for any $k\in X^*$, the equation $Bw=k$ has a unique solution on $X$.}

This theorem will be applied to $B:= A$ and $X:= W_0^{1,2}(D;\R^d)$. Notice that $\operatorname{A}=-\Delta + F$ coincides with the operator $\operatorname{T}$ on the space $W_0^{1,2}(D;\R^d)\cap\mathcal{S}$. Moreover, for any  $u, v\in W_0^{1,2}(D)$, 
\beqn
\langle \operatorname{A}u, v \rangle =\int_D\nabla u(x)\nabla v(x) dx +\int_D f(u(y))v(y)dy,
\eeqn
or coordinatewise,
\beqn
\langle \operatorname{A}u^i, v^i \rangle =\int_D \nabla u^i(x) \nabla v^i(x) dx +  \int_D f^i(u(y)) v^i(y) dy.
\eeqn
%%%%% A is monotone
From Poincar\'e's inequality we deduce that for any $u\in W_0^{1,2}(D;\R^d)$, $\Vert \nabla u\Vert_{L^2(D; \R^d)}^2 \ge a \Vert u\Vert^2_{L^2(D; \R^d)}$ (see e.g. \cite[Theorem 7.10, page 155]{GT2001}).
From this inequality and (\textbf{M}) it follows that $\operatorname{A}$ is strictly monotone. Indeed, for any $u,v\in W_0^{1,2}(D;\R^d)$, $u\ne v$, we have
\begin{align*}
\langle \operatorname{A}u-\operatorname{A}v, u-v \rangle &=\|\nabla (u-v)\|^2+\langle F(u)-F(v),u-v\rangle
\\&\geq \|\nabla (u-v)\|^2-L\|u-v\|^2
\\&\geq a\|u-v\|^2-L\|u-v\|^2
\\&=(a-L)\|u-v\|^2>0.
\end{align*}
%%%%% A is coercive
Using again Poincare's inequality, we have that 
\begin{align*}
\langle \operatorname{A}u, u\rangle &=\|\nabla u\|^2+\langle F(u), u \rangle
\\&= \|\nabla u \|^2+\langle F(u)-F(0), u \rangle+\langle F(0), u \rangle
\\&\geq  \|\nabla u \|^2-L\|u\|^2+\langle F(0), u \rangle
\\&\geq  (a-L)\|u\|^2+\langle F(0), u \rangle.
\end{align*}
Then, since $\left|\frac{\langle F(0), u\rangle}{\|u\|}\right|\leq |F(0)|_{\R^d}$, we see that
\begin{equation*}
\lim_{{\|u\|_{_{W_0^{1,2}(D;\R^d)}}\to +\infty}}\frac{\langle Au, u\rangle}{\|u\|}=+\infty,
\end{equation*}
proving coercivity.
%%%%% A is hemicontinuous

For any $u,v,w \in W_0^{1,2}(D;\R^d)$, we have 
\begin{align*}
\langle A(u+tv), w\rangle &=\int_D\nabla u(x)\nabla w(x) dx+t\int_D \nabla v(x)\nabla w(x) dx
\\&+ \int_D f(u(x)+t v(x))w(x) dx.
\end{align*}
This yields the continuity of the mapping $t\mapsto \langle \operatorname{A}(u+tv), w\rangle$ on $[0,1]$. Thus, $\operatorname{A}$ is hemicontinuous.

Therefore the equation $\operatorname{A} \w = b_n$ has a unique solution on $W_0^{1,2}(D;\R^d)$ that we denote by $u^n$, and the sequence $(u^n)_{n\ge 1}$ satisfies
\beq
\label{unexpr}
u^{n}(x) + \int_D G_D^k(x,y)f(u^{n}(y))dy = b_n(x), 
\eeq
for $x\in D$, and $u^n|_{\p D}=0$.
\smallskip

\noindent{\sl Step 2: Passing to the limit}. We prove that $(u_n)_{n\ge1}$ is a Cauchy sequence in $L^2(D;\R^d)$. Indeed, fix $n,\, m\geq 1$. Starting with the identity
\beqn
u^{n}(x)-u^{m}(x) + \int_D G_D^k(x,y)[f(u^n(y))-f(u^m(y))]dy = b_n-b_m, 
\eeqn
and taking the inner product with $F(u^n)-F(u^m)$ in $L^2(D;\R^d)$, yields
\begin{align*}
&\langle u^n-u^m, F(u^n)-F(u^m)\rangle\\ 
&\quad+ \left\langle \int_D  G_D^k(\cdot, y) (f(u^n(y))-f(u^m(y))) dy, F(u^n)-F(u^m)\right\rangle\\
& \quad= \left\langle F(u^n)-F(u^m), b_n-b_m\right\rangle.
\end{align*}
The assumption (\textbf{M}) and the property (\textbf{P}) implies
\begin{align}
\label{eqdifunum}
-L\|u^n-u^m\|^2&+ a\left\|\int_D  G_D^k(\cdot, y) (f(u^n(y))-f(u^m(y))) dy\right\|^2\nonumber
\\& \le \left\langle F(u^n)-F(u^m), b_n-b_m\right\rangle.
\end{align}
By substracting the expresion (\ref{unexpr}) for $n$ and $m$, respectively, we obtain,
\begin{align*}
\left\|\int_D  G(\cdot, y) (f(u^n(y))-f(u^m(y))) dy\right\|^2
&=\left\|u^n-u^m\right\|^2+\left\|b_n-b_m\right\|^2\\
& -2\langle u^n-u^m,b_n-b_m\rangle.
\end{align*}
Multiplying this identity by $a$ and using (\ref{eqdifunum}), we have 
\begin{align*}
&(a-L)\|u^n-u^m\|^2+ a\left\|b_n-b_m\right\|^2 \\
&\qquad \le \left\langle F(u^n)-F(u^m)+2a(u^n-u^m), b_n-b_m\right\rangle\\
&\qquad \leq \|b_n-b_m\|(\| F(u^n)-F(u^m)\|+2a\|u^n-u^m\|).
\end{align*}
We are assuming $0<L<a$. Hence we conclude that $\lim_{n,m\to\infty}\Vert u^n-u^m\Vert=0$.

Let $u$ be the $L^2(D;\R^d)$-limit of the sequence $(u^n)_{n\ge 1}$. 
Applying first H\"older's inequality with respect to the measure $G(\cdot, y)dy$ and then Fubini's Theorem, we obtain:
\begin{align*}
&\left\|\int_D G_D^k(\cdot, y) (f(u^n(y))-f(u(y)))dy\right\|^2\\
& \qquad=\int_D \left(\int_D G_D^k(x,y)(f(u^n(y))-f(u(y)))dy\right)^2dx 
\\&\qquad\leq C \int_D\int_D G_D^k(x,y)(f(u^n(y))-f(u(y)))^2dydx 
\\&\qquad = C \int_D(f(u^n(y))-f(u(y)))^2\left(\int_D G_D^k(x,y)dx\right) dy
\\& \qquad\leq C\int_D(f(u^n(y))-f(u(y)))^2 dy.
\end{align*}
Since the operator $\operatorname{F}$ is strongly continuous, this yields
\beqn
\left\|\int_D G_D^k(\cdot, y) (f(u^n(y))-f(u(y)))dy\right\|^2\underset{n\to\infty}{\longrightarrow} 0.
\eeqn

Let $b\in \mathcal{S}$ be given by
\beqn
b(x)=\int_D G_D^k(x,y) g^i(y)dy+\sigma \int_DG_D^k(x,y) W(dy).
\eeqn
Consider a sequence $(b_n)_{n\geq 1}\in \mathcal{C}_0^{\infty}(D;\R^d)$, such that $b_n\rightarrow b$
in $L^2(D;\R^d)$. By taking limits in the $L^2(D;\R^d)$-norm in \eqref{unexpr}, we obtain 
that a.s., $u$ satisfies the system of equations defined in \eqref{mildPoisseq} on $L^2(D; \R^d)$. By continuity, for almost all $\omega\in \Omega$, this is also an identity for any $x\in D$.
\hfill{$\square$}

\end{proof}

%%%%%%%%NEW SECTION PROPERTIES OF THE SOLUTION

\section{Properties of the solution}
\label{s2}

In this section we study the existence of moments of the solution to \eqref{mildPoisseq} and also the H\"older continuity of its sample paths. For this, we need a slightly stronger assumption than {\textbf{M}}, as follows.

\noindent $(\bar{\textbf{M}})$  The operator $F : L^2(D; \R^d)\rightarrow L^2(D; \R^d)$ defined in Section \ref{s1} admits a decomposition $F= F_1 + F_2$ which satisfies:
 \begin{enumerate}[(a)]
\item  $F_1 : L^2(D; \R^d)\rightarrow L^2(D; \R^d)$ is bounded. Moreover, for all $v,\, w\in L^2(D;\R^d)$,
\beqn
\langle v-w, F_1(v)-F_1(w)\rangle\geq 0;
\eeqn
\item  $F_2 : L^2(D; \R^d) \rightarrow L^2(D; \R^d)$ is such that there exists $L>0$ and for any $v\in L^2(D; \R^d)$,
\beqn
\vert F_2(v)(z_1)-F_2(v)(z_2)\vert \le L|v(z_1)-v(z_2)|, \ \forall z_1, z_2 \in \R^d.
\eeqn
\end{enumerate}
If $d=1$, the assumptions {\textbf{M}} and $(\bar{\textbf{M}})$ are equivalent. In general, $(\bar{\textbf{M}})$ implies {\textbf{M}} (with the same constant $L$).

\begin{prop}\label{up} 
The hypotheses are 
\begin{enumerate}[(i)]
\item $g\in L^2(D;\R^d)$;
\item The operator $\operatorname{F}$ satisfies the conditions  {\textbf{C}} and $(\bar{\textbf{M}})$;
\item Let $K=\sup_{x\in D}\|G_D^k(x,\cdot)\|_{L^2(D)}$. The constants $L$ and $a$ in $(\bar{\textbf{M}})$ and (\textbf{P}) respectively, satisfy $0<L<a\wedge (K|D|^{\frac{1}{2}})^{-1}$, where
$|D|$ denotes the Lebesgue measure of $D$.
\end{enumerate}
Then for all $p\geq 2$, 
\beqn
\|u\|_{L^p(\Omega;L^2(D;\R^d))} < \infty.% =\E[\|u\|_{L^2(D;\R^d)}^p]<+\infty.
\eeqn
\end{prop}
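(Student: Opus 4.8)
The plan is to obtain an a priori bound in $L^p(\Omega; L^2(D;\R^d))$ directly from the integral equation \eqref{mildPoisseq}, viewing it as a fixed-point identity $u = \operatorname{T}^{-1}(b)$ and then estimating $\|u\|$ by testing the equation against a suitable element of $L^2(D;\R^d)$. Concretely, I would start from the identity
\beqn
u(x) + \int_D G_D^k(x,y) f(u(y))\,dy = b(x),
\eeqn
where $b(x) = \int_D G_D^k(x,y) g(y)\,dy + \sigma\int_D G_D^k(x,y)\,W(dy)$ is a Gaussian field lying in $\mathcal{S}$. The key structural device is the same one used in Theorem \ref{thm31}: take the $L^2(D;\R^d)$-inner product of the equation $u - b = -\int_D G_D^k(\cdot,y)f(u(y))\,dy$ with $F(u) - F(b)$ (or a closely related combination), and use the Poincar\'e-type property (\textbf{P}) together with the monotonicity/Lipschitz decomposition $(\bar{\textbf{M}})$. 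The decomposition $F = F_1 + F_2$ is what makes this work: the monotone part $F_1$ contributes a nonnegative term and can be discarded, while the Lipschitz part $F_2$ is controlled by $L\|u - b\|$ after using the bound $\|\int_D G_D^k(\cdot,y)\varphi(y)\,dy\|_{L^2} \le K|D|^{1/2}\|\varphi\|_{L^2}$ coming from Lemma \ref{l-sa-1} and Cauchy--Schwarz (this is exactly where the hypothesis $L < (K|D|^{1/2})^{-1}$ enters, to absorb the $F_2$-term on the left).

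The second step is to convert the resulting deterministic inequality of the schematic form $\|u\| \le C\bigl(1 + \|b\|\bigr)$ — with $C$ depending only on $a$, $L$, $K$, $|D|$ and $\|g\|$, but not on $\omega$ — into a moment bound. For this I would raise both sides to the power $p$, take expectations, and invoke $\|b\|^p \in L^1(\Omega)$, which holds because $b$ is a Gaussian random element of $L^2(D;\R^d)$: indeed $\E\|b\|^2 = \int_D \E|b(x)|^2\,dx$ is finite by Lemma \ref{l-sa-1} applied to the stochastic integral (its $L^2$-norm is $\sum_j \sigma_{ij}^2 \|G_D^k(x,\cdot)\|^2$, uniformly bounded in $x$), and all higher moments of a Gaussian are finite, so $\|b\|_{L^2(D;\R^d)} \in L^p(\Omega)$ for every $p \ge 2$ by the hypercontractivity of Gaussian chaos (or simply because $\|b\|$ is a norm of a Gaussian vector and Fernique's theorem applies). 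Combining, $\|u\|_{L^p(\Omega;L^2(D;\R^d))} \le C(1 + \|b\|_{L^p(\Omega;L^2(D;\R^d))}) < \infty$.

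The main obstacle — and the step requiring the most care — is the inner-product manipulation that yields the clean deterministic estimate. One must choose the test function correctly (pairing against $F(u) - F(b)$ versus against $u - b$ directly gives different and not obviously useful inequalities), handle the cross term $\langle \int_D G_D^k(\cdot,y)f(u(y))\,dy, \cdot\rangle$ so that (\textbf{P}) can be applied with the right sign, and carefully split off $F_1$ using its monotonicity and $F_2$ using its pointwise Lipschitz bound, which after an application of Fubini (as in the last display of the proof of Theorem \ref{thm31}) gives $\|\int_D G_D^k(\cdot,y)(F_2(u)(y) - F_2(b)(y))\,dy\| \le L K |D|^{1/2}\|u - b\|$. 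Keeping track of all constants so that the coefficient of $\|u\|^2$ (equivalently $\|u-b\|^2$) on the left remains strictly positive under the stated smallness condition on $L$ is the crux; everything downstream is routine.
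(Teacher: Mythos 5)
Your overall strategy is genuinely different from the paper's. The paper works directly with the $L^p(\Omega;L^2(D;\R^d))$-norm: it splits $u$ into the three terms of \eqref{mildPoisseq}, bounds each one using Lemma \ref{l-sa-1}, boundedness of $F_1$, Lipschitz continuity of $F_2$ (splitting $f_2(u)=f_2(0)+(f_2(u)-f_2(0))$), hypercontractivity for the stochastic integral, and closes with the bootstrap inequality $\|u\|_{L^p(\Omega;L^2(D))}\le C_1 + LK|D|^{1/2}\|u\|_{L^p(\Omega;L^2(D))}$. Notably, the Poincar\'e-type property $(\mathbf{P})$ is never invoked there; only $L< (K|D|^{1/2})^{-1}$ is used. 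You instead propose to establish a \emph{pathwise} deterministic inequality $\|u(\omega)\|\le C(1+\|b(\omega)\|)$ and then take $L^p(\Omega)$-moments using Gaussianity of $b$. This has a small but genuine advantage: because $u(\omega)\in\mathcal{S}\subset L^2(D;\R^d)$ a.s., the pathwise absorption step is a priori legitimate, whereas the paper's bootstrap on $\|u\|_{L^p(\Omega;L^2(D))}$ tacitly assumes the quantity it is trying to bound is already finite (one would need an approximation or truncation argument to make it fully rigorous).

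There is, however, a gap in the inner-product manipulation you outline, which you yourself flag as the ``main obstacle.'' Pairing $u-b=-\int_D G_D^k(\cdot,y)f(u(y))\,dy$ against $F(u)-F(b)$ produces the term $-\bigl\langle \int_D G_D^k(\cdot,y)f(u(y))\,dy,\ F(u)-F(b)\bigr\rangle$, to which $(\mathbf{P})$ does not apply because the integrand inside the Green operator is $F(u)$ while the outer slot carries $F(u)-F(b)$; these must agree for $(\mathbf{P})$ to bite. The fix is to test against $F(u)$ alone: then $(\mathbf{P})$ gives $\langle u-b, F(u)\rangle\le -a\|u-b\|^2$, while $(\bar{\mathbf{M}})$ gives $\langle u-b, F(u)\rangle\ge \langle u-b,F(b)\rangle - L\|u-b\|^2$, yielding $(a-L)\|u-b\|\le\|F(b)\|\le M+|D|^{1/2}|f_2(0)|+L\|b\|$. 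This variant uses only $L<a$. Alternatively one can avoid $(\mathbf{P})$ altogether and run a pathwise version of the paper's own estimate: $\|u\|\le\|b\|+K|D|^{1/2}M+K|D|^{1/2}|D|^{1/2}|f_2(0)|+LK|D|^{1/2}\|u\|$, absorbed using $L<(K|D|^{1/2})^{-1}$. Your write-up conflates these two mechanisms — the $a$-based $(\mathbf{P})$ argument and the $K|D|^{1/2}$-based operator-norm argument — and invokes both constants at once; in fact either one alone closes the argument under the stated hypothesis $L<a\wedge(K|D|^{1/2})^{-1}$. The second step of your plan (moments of $\|b\|_{L^2(D;\R^d)}$ via Gaussianity and Fernique or hypercontractivity) is correct and unproblematic.
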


\begin{proof}
From \eqref{mildPoisseq} and the triangular inequality, we have
\beqn
\|u\|_{L^p(\Omega;L^2(D))}\leq S_1+S_2+S_3,
\eeqn 
with 
\begin{align*}
S_1&=\left\|\int_D G_D^k(\cdot, y)f(u(y)) dy\right\|_{L^p(\Omega;L^2(D;\R^d))},
\\S_2&=\left\|\int_D G_D^k(\cdot, y)g(y) dy\right\|_{L^2(D;\R^d)},
\\S_3&=\left\|\int_D G_D^k(\cdot, y)W(dy)\right\|_{L^p(\Omega;L^2(D;\R^d))}.
\end{align*}

%%%%%%S_1
Let $f_j: \R^d \longrightarrow \R^d$ be defined as $f_j(v(y))= F_j(v)(y)$, for any $v\in L^2(D; \R^d)$, $y\in \R^d$, $j=1,2$. Then 
\begin{align*}
S_1&\leq\left \|\int_D G_D^k(\cdot, y)f_1(u(y))dy\right\|_{L^p(\Omega;L^2(D;\R^d))}\\
&+\left \|\int_D G_D^k(\cdot, y)f_2(u(y))dy\right\|_{L^p(\Omega;L^2(D;\R^d))}.
\end{align*}
For the first term on the right-hand side of this inequality we have,
\begin{align*}
&\left \|\int_D G_D^k(\cdot, y)f_1(u(y))dy\right\|_{L^p(\Omega;L^2(D;\R^d))}\\
&\qquad=\left(\E\left(\int_D dx \left|\int_D G_D^k(x,y)f_1(u(y))dy\right|^2\right)^{p/2}\right)^{1/p}
\\&\qquad\leq \left(\E\left(\int_D dx \left(\int_D G_D^k(x,y)^2dy\right)\int_D |f_1(u(y))|^2dy\right)^{p/2}\right)^{1/p}
\\&\qquad\leq M|D|^{1/2} \sup_{x\in D}\|G_D^k(x,\cdot)\|.
\end{align*}
For the second one, we use property (b) of $(\bar{\textbf{M}})$ to obtain
\begin{align*}
&\left \|\int_D G_D^k(\cdot, y)f_2(u(y))dy\right\|_{L^p(\Omega;L^2(D;\R^d))}\\
&\qquad\leq \left \|\int_D G_D^k(\cdot, y)f_2(0)dy\right\|_{L^p(\Omega;L^2(D;\R^d))}\\
&\qquad+\left \|\int_D G_D^k(\cdot, y)(f_2(u(y))-f_2(0))dy\right\|_{L^p(\Omega;L^2(D;\R^d))}\\
&\qquad\leq |f_2(0)|\sup_{x\in D}\|G_D^k(x,\cdot)\||D|\\
&\qquad+L\sup_{x\in D}\|G_D^k(x,\cdot)\||D|^{1/2}\|u\|_{L^p(\Omega;L^2(D;\R^d))}.
\end{align*}
%%%%% write here the total for S_1

%{\color{blue}Applying Lemma 3.3 \cite{GM2006} (or result in Section \ref{sa}, depending on $D$ ), the second can be bounded as follows:}

%%%%% S_2
By applying Schwarz's inequality, we have
\begin{align}
\label {S_2}
&S_2=\left(\int_D dx \left|\int_D G_D^k(x,y) g(y)dy\right|^2\right)^{1/2}\notag\\
&\leq \sup_{x\in D}\|G_D^k(x,\cdot)\|_{L^2(D)}|D|^{1/2}\Vert g\Vert.%\leq C|D|^{1/2}\|g\|_{L^2(D)}.
\end{align}
%%%%%% S_3
Finally, we study $S_3$. We apply first H\"older's inequality with respect to the Lebesgue measure $dx$ , then the hypercontractivity property of Gaussian randon vectors and finally, the isometry property of the stochastic integral. This yields
\begin{align*}
\label{S_3}
S_3&=\left(\E\left(\int_D dx \left|\int_D G_D^k(x,y) W(dy)\right|^2\right)^{p/2}\right)^{1/p}\notag\\
&\leq |D|^{1/2-1/p}\left(\E\left(\int_D dx\left|\int_D G_D^k(x,y)W(dy)\right|^p\right)\right)^{1/p}\notag\\
&\leq C_p|D|^{1/2-1/p}\left(\int_D dx\left(\E\left|\int_D G_D^k(x,y)W(dy)\right|^2\right)^{p/2}\right)^{1/p}\notag\\
%&\leq C|D|^{1/2-1/p}\left(\int_D dx\left(\int_D G_d^2(x,y)dy\right)^{p/2}\right)^{1/p}\notag\\
&\leq C_p|D|^{1/2}\sup_{x\in D}\|G_D^k(x,\cdot)\|.
\end{align*}
By Lemma \ref{l-sa-1}, $K=\sup_{x\in D}\|G_D^k(x,\cdot)\|_{L^2(D)}$ is finite. Hence, from the upper bounds proved so far we infer that 
\beqn
\|u\|_{L^p(\Omega;L^2(D))}\leq C_1 + L K |D|^{\frac{1}{2}} \|u\|_{L^p(\Omega;L^2(D))},
\eeqn
with $C_1= K|D|^{\frac{1}{2}}[M+|f_2(0)||D|^{\frac{1}{2}}+\Vert g\Vert+ C_p]$. Since we are assuming $1-L K |D|^{\frac{1}{2}}>0$, this yields the Proposition.
\hfill{$\square$}
\end{proof}

\begin{obs}
\label{smallLipschiz} 
In the context of elliptic operators, the assumption on the constant $L$ in the preceding Proposition is natural. It is a restriction to preserve the positiveness property of the operator $-\Delta +F$.
\end{obs}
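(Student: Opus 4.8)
The plan is to make the remark precise by identifying the Poincaré constant $a$ of $(\textbf{P})$ with the bottom of the spectrum of the Dirichlet Laplacian on $D$, and by showing that the threshold $L=a$ is sharp for the strict monotonicity — equivalently, the positivity — of $\operatorname{A}=-\Delta+F$. First I would recall that the constant $a$ appearing in $(\textbf{P})$ and in the Poincaré inequality $\|\nabla w\|^2\ge a\|w\|^2$ used in the proof of Theorem \ref{thm31} is exactly the first Dirichlet eigenvalue $\lambda_1(D)$ of $-\Delta$ on $D$, hence optimal, being attained in the limit along the first eigenfunction. Reproducing the computation already carried out for $\operatorname{A}$ in that proof, for $u,v\in W_0^{1,2}(D;\R^d)$ and $F$ satisfying $(\bar{\textbf{M}})$ (hence $(\textbf{M})$ with constant $L$) one has
\begin{equation*}
\langle \operatorname{A}u-\operatorname{A}v, u-v\rangle=\|\nabla(u-v)\|^2+\langle F(u)-F(v), u-v\rangle\ge (a-L)\|u-v\|^2 .
\end{equation*}
So $a-L>0$ is precisely what keeps the left-hand side strictly positive for $u\ne v$, i.e. keeps $-\Delta+F$ strictly monotone and coercive, which is the meaning to be attached to its "positiveness"; this is, in turn, what makes the mild operator $\operatorname{T}$ a bijection of $\mathcal{S}$.

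Next I would argue that $L=a$ is a genuine barrier and not an artefact of the method, by examining the linear model $f(x)=-Lx$, i.e. $F(v)=-Lv$, which formally fits the decomposition of $(\bar{\textbf{M}})$ with $F_1\equiv 0$ and $F_2(v)=-Lv$. Then $\operatorname{A}=-\Delta-L\,\mathrm{Id}$, whose spectrum on $W_0^{1,2}(D;\R^d)$ is $\{\lambda_n-L\}_{n\ge 1}$ with $\lambda_1=a$: for $L=a$ the first eigenfunction lies in $\ker\operatorname{A}$, so $\operatorname{T}$ ceases to be injective and uniqueness fails; for $L>a$ the operator acquires a negative direction, so it is no longer positive, and in the integral formulation the operator $\mathrm{Id}-L\,\mathcal{G}_D^k$ — where $\mathcal{G}_D^k$ is the Green operator with kernel $G_D^k$, with eigenvalues $\lambda_n^{-1}$ — is no longer positive either. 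Hence $0<L<a$ is exactly the range in which positivity of $-\Delta+F$ persists for all admissible nonlinearities, which is the content of the remark; for an example strictly within the standing hypotheses $\textbf{C}$ and $(\bar{\textbf{M}})$ one replaces the linear map by a bounded, $L$-Lipschitz $f$ whose Jacobian at the origin has norm $L$, and invokes instability or bifurcation of the zero solution when $L>a$.

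Finally I would observe that the additional requirement $L<(K|D|^{1/2})^{-1}$ in Proposition \ref{up} is of the same flavour — a smallness constraint on the Lipschitz part $F_2$ — but of a purely technical character: it only serves to force the contraction factor $LK|D|^{1/2}$ in the a priori bound $\|u\|_{L^p(\Omega;L^2(D))}\le C_1+LK|D|^{1/2}\|u\|_{L^p(\Omega;L^2(D))}$ to be strictly less than one, and it could conceivably be relaxed, whereas the positivity threshold $L=a$ cannot. The only point requiring care in a formal write-up is to fix once and for all the precise reading of "positiveness" (positivity of the associated bilinear form, strict monotonicity of $\operatorname{A}$, or bijectivity of $\operatorname{T}$ — all equivalent here) and to check that the sharpness example is compatible with the assumptions; once that is settled, the justification reduces to the one-line spectral computation above, so I do not expect any substantial obstacle.
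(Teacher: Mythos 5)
Since this remark has no accompanying proof in the paper, your write-up is necessarily an elaboration rather than a reproduction, and the core of it is sound: identifying the optimal constant in $(\textbf{P})$ with $\lambda_1(D)$, reading ``positiveness of $-\Delta+F$'' as strict monotonicity of $\operatorname{A}$ through the inequality $\langle \operatorname{A}u-\operatorname{A}v, u-v\rangle\ge(a-L)\|u-v\|^2$ already established in the proof of Theorem \ref{thm31}, and illustrating sharpness with the linear model $F(v)=-Lv$ (which, as you rightly flag, violates the boundedness part of $(\textbf{C})$ and so is only a heuristic) are all the right moves.

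You do, however, mischaracterize the relationship between the two constraints in Proposition \ref{up}. You treat $L<a$ and $L<(K|D|^{1/2})^{-1}$ as independent, the first being ``the'' positivity condition and the second a disposable contraction artefact. But Cauchy--Schwarz gives $\|\mathcal{G}_D^k\|_{\mathrm{op}}\le K|D|^{1/2}$ for the Green operator $\mathcal{G}_D^k$, while $\|\mathcal{G}_D^k\|_{\mathrm{op}}=1/\lambda_1=1/a$ once $a$ is the optimal Poincar\'e constant as you yourself set up; hence $(K|D|^{1/2})^{-1}\le a$, and the stated assumption $0<L<a\wedge(K|D|^{1/2})^{-1}$ collapses to $0<L<(K|D|^{1/2})^{-1}$. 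The effective condition therefore automatically keeps $L$ below $a$ --- which is precisely why the authors can legitimately describe the whole assumption on $L$ in Proposition \ref{up} as a ``restriction to preserve the positiveness property of $-\Delta+F$.'' Your separation of the two roles of $L$ (positivity versus the fixed-point iteration in the a priori bound) is a useful observation, but the two thresholds are nested rather than independent, and the remark is accurate as stated once this ordering is noticed.
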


%%%%% Proposition on Holder continuity of the solution

\begin{prop}\label{thm2bl3}
The hypotheses are as in Proposition \ref{up}. Fix a ball centered at $0$ and with radius $r$, $B_r(0)$, strictly included in $D$. Then, 
for any $p\geq 2$ there exists a positive constant $C$ (depending on $r$)
such that, for any $x_1, x_2\in B_r(0)$, 
\beq
\label{p-s1-10}
\E[|u(x_1)-u(x_2)|^p]\leq C|x_1-x_2|^{p\xi},
\eeq
with 
\beqn
\xi=\begin{cases}
1, & \text {if}\  k=1,\\
1-\gamma, & \text{if}\ k=2,\\
\frac{1}{2}, & \text {if}\  k=3,
\end{cases}
\eeqn
 where $\gamma>0$ is arbitrarily small. Therefore, for almost all $\omega\in\Omega$, the sample paths of the process $u$ are H\"older continuous of degree 
 $\alpha\in(0,1)$, if $k=1,2$, and $\alpha\in(0,1/2)$, if $k=3$.
\end{prop}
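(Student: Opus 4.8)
The plan is to estimate the $L^p(\Omega)$-norm of the increment $u(x_1)-u(x_2)$ by splitting it, via the mild formulation \eqref{mildPoisseq}, into three contributions: the nonlinear term $\int_D[G_D^k(x_1,y)-G_D^k(x_2,y)]f(u(y))\,dy$, the deterministic forcing term $\int_D[G_D^k(x_1,y)-G_D^k(x_2,y)]g(y)\,dy$, and the stochastic term $\sum_j\sigma_{ij}\int_D[G_D^k(x_1,y)-G_D^k(x_2,y)]W^j(dy)$. For the first two, Cauchy--Schwarz in $L^2(D)$ bounds them by $\|G_D^k(x_1,\cdot)-G_D^k(x_2,\cdot)\|$ times $\|f(u)\|_{L^p(\Omega;L^2)}$ (finite by the boundedness of $F$ combined with Proposition \ref{up}, or more simply by condition \textbf{C}) and $\|g\|$, respectively. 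For the stochastic term, the Wiener integral is a Gaussian random vector whose covariance is controlled by $\|G_D^k(x_1,\cdot)-G_D^k(x_2,\cdot)\|^2$; hypercontractivity (equivalence of $L^p$ and $L^2$ moments for Gaussians) then gives an $L^p(\Omega)$ bound again proportional to $\|G_D^k(x_1,\cdot)-G_D^k(x_2,\cdot)\|$. Hence in all cases the whole increment is controlled by $C\,\|G_D^k(x_1,\cdot)-G_D^k(x_2,\cdot)\|$.

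The remaining point is to feed in the dimension-dependent $L^2$-estimates of increments of the Green function. By Lemmas \ref{l-sa-2}, \ref{l-sa-4}, \ref{l-sa-3} (for $k=1,3,2$ respectively), restricted to $x_1,x_2$ in the ball $B_r(0)$ strictly inside $D$, one has $\|G_D^k(x_1,\cdot)-G_D^k(x_2,\cdot)\|\le C_r|x_1-x_2|^{\xi}$ with $\xi=1$ for $k=1$, $\xi=\tfrac12$ for $k=3$, and $\xi=1-\gamma$ (any small $\gamma>0$) for $k=2$. Raising the increment bound to the $p$-th power yields \eqref{p-s1-10}. Note the restriction to a ball strictly inside $D$ is exactly what is needed to invoke the interior estimates of Section \ref{sa}; the constant $C$ depends on $r$ through the distance from $B_r(0)$ to $\partial D$.

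Finally, the Hölder continuity of sample paths follows from Kolmogorov's continuity criterion: since \eqref{p-s1-10} holds for every $p\ge2$ with exponent $p\xi$, the process admits a modification that is Hölder continuous of any order $\alpha<\xi-k/p$ on $B_r(0)$, and letting $p\to\infty$ gives every $\alpha<\xi$. Since $r<\mathrm{dist}(0,\partial D)$ is arbitrary, one obtains local Hölder continuity on $D$, hence (covering $D$ by such balls) the stated regularity: $\alpha\in(0,1)$ for $k=1,2$ and $\alpha\in(0,1/2)$ for $k=3$.

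I do not expect a serious obstacle here, since the heavy lifting is done in Section \ref{sa}: the only mild subtlety is making sure the nonlinear term $\int_D[G_D^k(x_1,y)-G_D^k(x_2,y)]f(u(y))\,dy$ is handled uniformly in $\omega$ — this is immediate from condition \textbf{C} ($F$ bounded), which bounds $|f(u(y))|$ by a constant, so that term is deterministically $\le C\|G_D^k(x_1,\cdot)-G_D^k(x_2,\cdot)\|$. If one instead only assumes $f$ of linear growth, one would invoke Proposition \ref{up} to control $\|f(u)\|_{L^p(\Omega;L^2)}$; either way the argument goes through.
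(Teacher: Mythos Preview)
Your proposal is correct and follows essentially the same route as the paper: split via \eqref{mildPoisseq} into the nonlinear, deterministic, and stochastic terms, bound each by $C\|G_D^k(x_1,\cdot)-G_D^k(x_2,\cdot)\|$ (Cauchy--Schwarz for the first two, Gaussian hypercontractivity for the third), then invoke the Green-function increment estimates of Section~\ref{sa} and Kolmogorov's criterion. The only minor difference is in the nonlinear term: the paper does not rely on \textbf{(C)} alone but instead uses the decomposition $(\bar{\textbf{M}})$ together with Proposition~\ref{up} to bound $\E\big[\|f(u)\|_{L^2(D)}^p\big]$ (your stated fallback), so your claim that \textbf{(C)} ``bounds $|f(u(y))|$ by a constant'' should be read as bounding $\|F(u)\|_{L^2(D)}$ rather than the pointwise values.
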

\begin{proof}
From \eqref{mildPoisseq}, we clearly have
\beqn
\E[|u(x_1)-u(x_2)|^p]\leq C_p(A(x_1,x_2)+B(x_1,x_2)+C(x_1,x_2)),
\eeqn
where
\begin{align*}
A(x_1,x_2)&=\E\left[\left|\int_D[G_D^k(x_1,y)-G_D^k(x_2,y)]f(u(y))dy\right|^p\right],
\\B(x_1,x_2)&=\E\left[\left|\int_D[G_D^k(x_1,y)-G_D^k(x_2,y)]g(y)dy\right|^p\right],
\\C(x_1,x_2)&=\E\left[\left|\int_D[G_D^k(x_1,y)-G_D^k(x_2,y)]W(dy)\right|^p\right].
\end{align*}
The hypothesis $(\bar{\textbf{M}})$ implies the following:
\begin{align*}
\int_D|f(u(y))|^2dy&\leq 2 M^2+2\int_D|f_2(u(y))|^2 dy
\\&\leq 2M^2+4|D||f_2(0)|^2+4L\|u\|^2.
\end{align*}
Therefore, 
\begin{align}
\label{p-s1-101}
 \E\left[\int_D|f(u(y))|^2dy\right]^{p/2}&\leq C_1(M,|D|,f_2(0), p)+C_2(L,p)\E[\|u\|^p],
\end{align}
 and the right-hand side of this expression is finite, due to Proposition \ref{up}.
 
 Using this result and after applying Cauchy-Schwarz inequality, we obtain
 \begin{align}
\label{p-s1-11}
A(x_1,x_2)&\leq \left[\int_D[G_D^k(x_1,y)-G_D^k(x_2,y)]^2dy\right]^{p/2} \E\left[\int_D|f(u(y))|^2dy\right]^{p/2}\notag\\
&\le C\left[\int_D[G_D^k(x_1,y)-G_D^k(x_2,y)]^2dy\right]^{p/2}.
\end{align}
Applying again Cauchy-Schwarz inequality and using the properties of $g$ gives
\beq
\label{p-s1-12}
B(x_1,x_2)\leq C \left[\int_D[G_D^k(x_1,y)-G_D^k(x_2,y)]^2dy\right]^{p/2}.
\eeq
Finally, the hypercontractivity property yields
\begin{align}
\label{p-s1-13}
C(x_1,x_2)&\leq C \left(E\left|\int_D [G_D^k(x_1,y)-G_D^k(x_2,y)]W(dy)\right|^2\right)^{p/2}\notag\\
&= C \left(\int_D |G_D^k(x_1,y)-G_D^k(x_2,y)|^2dy\right)^{p/2}.
\end{align}
From \eqref{p-s1-11}--\eqref{p-s1-13} we see that
\beqn
\E[|u(x_1)-u(x_2)|^p]\leq C \left(\int_D |G_D^k(x_1,y)-G_D^k(x_2,y)|^2dy\right)^{p/2}.
\eeqn
We conclude the proof of \eqref{p-s1-10} by applying lemmas \ref{l-sa-2}, \ref{l-sa-3}, \ref{l-sa-4} of Section \ref{sa}. The statement on the sample paths of $u$ follows from Kolmogorov's continuity lemma.

\hfill{$\square$}
\end{proof}

\begin{prop}
\label{moments}
The hypotheses are as in Proposition \ref{up}. Then, for any $p\ge 2$, 
\beq
\label{p-s1-14}
\sup_{x\in D} \Vert u(x)\Vert_{L^p(\Omega; \mathbb{R}^d)} < \infty.
\eeq
\end{prop}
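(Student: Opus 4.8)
The plan is to bound $\Vert u(x)\Vert_{L^p(\Omega;\R^d)}$ uniformly in $x$ by going back to the mild formulation \eqref{mildPoisseq} and estimating each of the three terms on the right-hand side, using the $L^p(\Omega;L^2(D;\R^d))$-bound from Proposition \ref{up} to control the nonlinear term. Writing $u^i(x) = -\int_D G_D^k(x,y) f^i(u(y))\,dy + \int_D G_D^k(x,y) g^i(y)\,dy + \sum_j \sigma_{ij}\int_D G_D^k(x,y)\,W^j(dy)$ and applying the triangular inequality in $L^p(\Omega;\R^d)$, it suffices to bound each piece by a constant independent of $x$.

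\medskip

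First I would treat the deterministic terms. For the drift term, Cauchy--Schwarz in the $y$-variable gives $\left|\int_D G_D^k(x,y) f^i(u(y))\,dy\right| \le \|G_D^k(x,\cdot)\| \, \|f(u)\| \le K \|f(u)\|$, where $K = \sup_{x\in D}\|G_D^k(x,\cdot)\|$ is finite by Lemma \ref{l-sa-1}. Taking $L^p(\Omega)$-norms and using the bound $\|f(u)\|^2 \le 2M^2 + 4|D||f_2(0)|^2 + 4L\|u\|^2$ coming from $(\bar{\textbf{M}})$ (exactly as in \eqref{p-s1-101} in the proof of Proposition \ref{thm2bl3}), together with $\|u\|_{L^p(\Omega;L^2(D;\R^d))} < \infty$ from Proposition \ref{up}, yields a finite bound uniform in $x$. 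The term with $g$ is handled identically, with $\|f(u)\|$ replaced by $\|g\| < \infty$.

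\medskip

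For the stochastic term, I would use that $\int_D G_D^k(x,y)\,W^j(dy)$ is, for each fixed $x$, a centered Gaussian random variable with variance $\|G_D^k(x,\cdot)\|^2 \le K^2$. By the hypercontractivity property of Gaussian random vectors (equivalence of all $L^p$-norms on a fixed Wiener chaos), $\E\left|\sum_j \sigma_{ij}\int_D G_D^k(x,y)\,W^j(dy)\right|^p \le C_{p,\sigma} \left(\sum_j \|G_D^k(x,\cdot)\|^2\right)^{p/2} \le C_{p,\sigma} (dK^2)^{p/2}$, again uniform in $x$. Collecting the three estimates gives $\sup_{x\in D}\Vert u(x)\Vert_{L^p(\Omega;\R^d)} \le C_1 + C_2 \|u\|_{L^p(\Omega;L^2(D;\R^d))} + C_3 < \infty$.

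\medskip

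I do not expect a genuine obstacle here, since all the analytic inputs (finiteness of $K$, the $(\bar{\textbf{M}})$-decomposition bound on $\|f(u)\|$, hypercontractivity, and crucially the a priori bound of Proposition \ref{up}) are already available. The only mild subtlety worth stating carefully is that Cauchy--Schwarz is applied pointwise in $x$ \emph{before} taking the $L^p(\Omega)$-norm, so that the resulting constant depends only on $K$, $|D|$, $L$, $M$, $|f_2(0)|$, $\|g\|$, $p$, $d$ and $\sigma$, but not on $x$; this is what upgrades the $L^2(D)$-averaged estimate of Proposition \ref{up} to the pointwise-in-$x$ estimate \eqref{p-s1-14}.
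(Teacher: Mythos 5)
Your proposal is correct and follows the same route as the paper's own proof: split via the mild formulation and the triangular inequality into the nonlinear, deterministic-drift, and stochastic pieces; control the first two by Cauchy--Schwarz together with $K=\sup_{x\in D}\|G_D^k(x,\cdot)\|<\infty$ (Lemma \ref{l-sa-1}), the bound \eqref{p-s1-101} from $(\bar{\textbf{M}})$, and Proposition \ref{up}; and control the Gaussian term by hypercontractivity. No gaps.
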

\begin{proof}
It is similar to that of the preceding proposition. By the triangular inequality,
\beqn
\E[|u(x)|^p]\leq C_p(A(x)+B(x)+C(x)),
\eeqn
with
\begin{align*}
A(x)&=\E\left[\left|\int_D G_D^k(x,y) f(u(y))dy\right|^p\right],
\\B(x)&=\E\left[\left|\int_D G_D^k(x,y) g(y)dy\right|^p\right],
\\C(x)&=\E\left[\left|\int_D G_D^k(x,y) W(dy)\right|^p\right].
\end{align*}
The conclusion will be obtained by proving that each of the above expressions are finite, uniformly in $x\in D$.
This relies on Lemma \ref{l-sa-1} and the following arguments.

Applying \eqref{p-s1-101} and Proposition \ref{up} yields
\beqn
\E\left[\int_D|f(u(y))|^2dy\right]^{p/2}\leq C.
\eeqn
Hence, by Cauchy-Schwarz inequality and Lemma \ref{l-sa-1},
\beqn
A(x)\leq \left[\int_D [G_D^k(x,y)]^2dy\right]^{p/2} \E\left[\int_D|f(u(y))|^2dy\right]^{p/2} \le C.
\eeqn
Similarly,
\beqn
B(x)\leq C \left[\int_D[G_D^k(x,y)]^2dy\right]^{p/2}\le C.
\eeqn
Finally, by the hypercontractivity property,
\begin{align*}
C(x)&\leq C \left(E\left|\int_D  G_D^k(x,y) W(dy)\right|^2\right)^{p/2}\notag\\
&= C \left(\int_D |G_D^k(x,y)|^2dy\right)^{p/2}\\
&\le C.
\end{align*}
In all the expressions above, $C$ denotes a finite constant. Hence \eqref{p-s1-14} holds.
\hfill{$\square$}

\end{proof}

%%%%%%%
%%%%%%% the law of the solution
%%%%%%%

\section{The law of the solution}
\label{s2-0}

This section is devoted to prove that the probability law of the solution to the system of SPDEs \eqref{mildPoisseq} is absolutely continuous with respect to a Gaussian measure defined on the Banach space $\mathcal{S}$. As a consequence, for any fixed $x\in D$, the law of $u(x)$ is absolutely continuous with respect to the Lebesgue measure on $\R^d$. To a large extent, the content of the section is an extension to the $d$--dimensional case of results proved in \cite{DM1992}.

Denote by $\mu$ the law on $\mathcal{S}$ of the Gaussian stochastic process
\beqn
\left(w(x) = \sigma\int_D G_D^k(x,y)W(dy), \ x\in D\right),
\eeqn
and by $H$ the Hilbert space $L^2(D; \R^d)$. Then, extending \cite[Proposition 3.1]{DM1992} for $d=1$, we have that $(\mathcal{S}, H, \mu)$ is an {\it abstract Wiener space}. Indeed,
$\mathcal{S}$ endowed with the supremum norm is a separable Banach space. By applying Schwarz inequality and then Lemma \ref{l-sa-1}, we obtain that the mapping 
\begin{align*}
i: H& \longrightarrow \mathcal{S}\\
h & \mapsto \int_D G_D^k(\cdot,y) h(y) dy
\end{align*} 
is continuous. Moreover, since the Dirichlet problem $\Delta v = h$ on $D$, $v|\partial D = 0$, has a unique solution, we have that the mapping $i$ is one-to-one and clearly, $i(H)$ is densely embedded in $\mathcal{S}$.
\smallskip

For its further use throughout this section, we introduce a new assumption.
\smallskip

\noindent{\bf(I)}  The function $f$ is continuously differentiable, and $\det J_f(x) \ne 0$, for any $x\in \R^d$, where $J_f$ denotes the Jacobian matrix of $f$. Moreover,
the linear operator $J_f^{-1}(\omega): H\rightarrow H$ defined by $J_f^{-1}(\omega)(h)(x)=J_f^{-1}(\omega(x))h(x)$ is {\it positive}, that is, 
\beqn
\left\langle J_f^{-1}(\omega)(h), h\right\rangle >0, \ \forall h\in H.
\eeqn
If $d=1$, the assumption is $f\in\mathcal{C}^1$, $f^\prime>0$ (see \cite[(H.1), p. 229]{DM1992}).
 
\begin{prop}
\label{p1-s2-0}
We keep the assumptions of Theorem \ref{thm31} and in addition, we suppose that {\bf(I)} holds. Then, the mapping
\begin{align*}
\bar F: \mathcal{S} & \longrightarrow i(H)\\
\omega & \mapsto i(F(\omega)) = \int_D G_D^k(\cdot,y) f(\omega(y)) dy,
\end{align*}
satisfies the following properties. 
\begin{enumerate}
\item For any $\omega\in\mathcal{S}$, there exists a Hilbert-Schmidt operator $D{\bar F}(\omega): H \longrightarrow H$ such that
\beq
\label{s2.1}
\Vert \bar F(\omega+i(h))-\bar F(\omega)- D\bar F(\omega)(h)\Vert_H = o(\Vert h\Vert_H),\ \text{as}\ \Vert h\Vert_H\to 0.
\eeq
\item For any $\omega\in\mathcal{S}$, the mapping $h\mapsto D\bar F(\omega+i(h))$ is continuous from $H$ into the space of Hilbert-Schmidt operators on $H$.
\item For any $\omega\in\mathcal{S}$, the mapping $I_H+ D\bar F(\omega)$ is invertible, where $I_H$ denotes the identity operator on $H$.
\end{enumerate}
\end{prop}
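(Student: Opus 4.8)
The plan is to verify each of the three properties directly from the explicit form of $\bar F(\omega) = i(F(\omega))$, exploiting the factorization through the continuous embedding $i: H\to\mathcal S$ and the smoothness of $f$ assumed in {\bf(I)}. For property (1), I would first identify the natural candidate for the derivative: since $F(\omega)(y) = f(\omega(y))$ acts pointwise and $f\in\mathcal C^1$, the Gateaux derivative of $F$ at $\omega$ in the direction $i(h)$ should be the multiplication operator $h\mapsto J_f(\omega(\cdot))\, (i(h))(\cdot)$, and hence $D\bar F(\omega)$ should be $h\mapsto i\bigl(J_f(\omega(\cdot))\, i(h)(\cdot)\bigr)$. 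The key point is that this is Hilbert--Schmidt on $H = L^2(D;\R^d)$: the map $h\mapsto i(h)$ from $H$ into $\mathcal C(\bar D;\R^d)$, composed with the bounded multiplication by $J_f(\omega(\cdot))$ (bounded because $\omega\in\mathcal S$ is continuous on the compact $\bar D$ and $J_f$ is continuous) and then with $i$ again, is the composition of $i$ with a bounded operator; since $i$ has an integral kernel $G_D^k(x,y)$ with $\int_D\int_D G_D^k(x,y)^2\,dx\,dy<\infty$ (Lemma \ref{l-sa-1} gives this), $i$ is Hilbert--Schmidt from $H$ to $H$, and composing a Hilbert--Schmidt operator with bounded operators keeps it Hilbert--Schmidt. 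The Fréchet remainder estimate \eqref{s2.1} then follows from the first-order Taylor expansion of $f$ with the integral form of the remainder, together with the bound $\Vert i(h)\Vert_{\infty}\le C\Vert h\Vert_H$ (Schwarz plus Lemma \ref{l-sa-1}); uniform continuity of $J_f$ on the compact range of $\omega + i(h)$ for $\Vert h\Vert_H$ small controls the $o(\Vert h\Vert_H)$ term via $\Vert\int_D G_D^k(\cdot,y)[f(\omega(y)+i(h)(y)) - f(\omega(y)) - J_f(\omega(y))i(h)(y)]\,dy\Vert_H \le \sup_{x}\Vert G_D^k(x,\cdot)\Vert \cdot |D|^{1/2}\cdot \sup_y|\cdots|$.

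For property (2), continuity of $h\mapsto D\bar F(\omega+i(h))$ into the Hilbert--Schmidt operators: writing $D\bar F(\omega+i(h)) = i\circ M_{J_f(\omega(\cdot)+i(h)(\cdot))}$ where $M$ denotes pointwise multiplication, the difference $D\bar F(\omega+i(h_1)) - D\bar F(\omega+i(h_2)) = i\circ M_{J_f(\omega+i(h_1)) - J_f(\omega+i(h_2))}$ has Hilbert--Schmidt norm bounded by $\Vert i\Vert_{HS}$ times the operator norm of the multiplication operator, i.e. by $\Vert i\Vert_{HS}\cdot\sup_{x\in\bar D}\vert J_f(\omega(x)+i(h_1)(x)) - J_f(\omega(x)+i(h_2)(x))\vert$. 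As $\Vert h_1 - h_2\Vert_H\to 0$ we have $\Vert i(h_1) - i(h_2)\Vert_\infty\to 0$, so the values $\omega(x)+i(h_j)(x)$ stay in a fixed compact set and uniform continuity of $J_f$ there forces the supremum to zero; this gives the claim.

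For property (3), invertibility of $I_H + D\bar F(\omega)$: I would use {\bf(I)}. Note $I_H + D\bar F(\omega) = I_H + i\circ M_{J_f(\omega(\cdot))}$. Since $i$ itself, on the range $i(H)$, inverts the Dirichlet Laplacian, it is natural to relate $I_H + i\,M_{J_f}$ to an operator of the form $M_{J_f}^{-1} + i$ sandwiched by $M_{J_f}$; more precisely, $(I_H + i\,M_{J_f})v = 0$ means $v = -i(J_f(\omega(\cdot))v(\cdot))$, so $v\in i(H)$ and, applying $-\Delta$, $\Delta v = J_f(\omega(\cdot))v$ with $v|_{\partial D}=0$. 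Testing against $J_f^{-1}(\omega(\cdot))v$ and integrating by parts gives $\langle \nabla v, \nabla(J_f^{-1}(\omega(\cdot))v)\rangle = -\langle J_f^{-1}(\omega(\cdot))v, v\rangle \cdot(\text{sign})$; the positivity assumption in {\bf(I)} forces $v=0$, so the operator is injective. Since $D\bar F(\omega) = i\,M_{J_f}$ is compact (Hilbert--Schmidt), $I_H + D\bar F(\omega)$ is Fredholm of index zero, hence injectivity implies surjectivity and bounded invertibility. The main obstacle I anticipate is property (3): the formal integration-by-parts / quadratic-form argument must be justified with care because $J_f^{-1}(\omega(\cdot))v$ need not lie in $W_0^{1,2}$ a priori unless one knows enough regularity of $v$ and of $J_f(\omega(\cdot))$, so one may instead have to argue purely at the level of compact perturbations of the identity plus a spectral/energy argument showing $-1$ is not an eigenvalue of the compact operator $i\,M_{J_f}$, using that this operator is similar, via $M_{J_f}^{1/2}$ or via symmetrization against the inner product twisted by $J_f^{-1}$, to a negative (or positive) operator whose spectrum avoids $-1$. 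Getting this symmetrization right — choosing the correct twisted inner product in which $i\,M_{J_f}$ becomes self-adjoint and sign-definite, so that {\bf(I)}'s positivity hypothesis directly excludes the eigenvalue $-1$ — is the delicate step; everything else is routine bookkeeping with Hilbert--Schmidt norms and the bound $\Vert i(h)\Vert_\infty\le C\Vert h\Vert_H$.
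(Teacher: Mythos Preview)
Your approach to properties (1) and (2) is correct and coincides with the paper's: the derivative is identified explicitly, its Hilbert--Schmidt character follows because the kernel $G_D^k$ is square integrable on $D\times D$ (Lemma~\ref{l-sa-1}), and the continuity in HS norm follows from the uniform continuity of $J_f$ on compacta together with the continuity of $i:H\to\mathcal S$. One point to clean up: you give the derivative as $h\mapsto i\bigl(J_f(\omega(\cdot))\,i(h)(\cdot)\bigr)$ in part (1) but then silently switch to $D\bar F(\omega)=i\circ M_{J_f}$ (dropping the second $i$) in part (3). The paper takes $D\bar F(\omega)(h)=J_f(\omega(\cdot))\,i(h)(\cdot)=M_{J_f(\omega)}\circ i$, which is the $H$-derivative in Kusuoka's sense (identifying $i(H)$ with $H$ via $i^{-1}$). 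The discrepancy does not affect the HS or continuity arguments, but you should make one consistent choice.

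For property (3) you correctly reduce, via the Fredholm alternative, to showing that $-1$ is not an eigenvalue of the compact operator $D\bar F(\omega)$. However, your plan --- passing to the differential equation $\Delta v=J_f(\omega)v$ and testing against $J_f^{-1}(\omega)v$ --- is both more laborious than needed and, as you yourself note, faces a genuine obstacle since $J_f^{-1}(\omega(\cdot))v$ need not lie in $W_0^{1,2}$. The paper bypasses this entirely by staying at the level of the integral operator: from the eigenvalue equation $h+J_f(\omega)\,i(h)=0$ one applies $J_f^{-1}(\omega(\cdot))$ pointwise to obtain
\[
J_f^{-1}(\omega)\,h+i(h)=0,
\]
and then takes the $H$-inner product with $h$. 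Property \textbf{(P)} gives $\langle i(h),h\rangle\ge a\,\lVert i(h)\rVert^2\ge 0$, while assumption \textbf{(I)} gives $\langle J_f^{-1}(\omega)h,h\rangle>0$ for $h\ne 0$; the sum cannot vanish, so $h=0$. No differential operators, no $W_0^{1,2}$ regularity, no twisted inner product or similarity transform --- just the positivity of the Green operator already recorded in \textbf{(P)}. This is exactly the ``symmetrization'' you were searching for at the end of your outline, carried out at the integral-operator level rather than at the level of $-\Delta$.
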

\begin{proof}
For any $\omega\in\mathcal{S}$, set
\beq
\label{s2-0-1}
D{\bar F}(\omega)(h) = J_f(\omega(\cdot))\int_D G_D^k(\cdot,y) h(y)dy, \ h\in H.
\eeq
The assumptions on $f$ imply that $\int_D |J_f(\omega(x))|^2 dx<\infty$. Then, by the definition of the Hilbert-Schmidt norm  (see e.g. \cite[Theorem VI.23, p. 210]{rs1}) and by using Lemma \ref{l-sa-1} we obtain,
\begin{align*}
\Vert D{\bar F}(\omega)\Vert^2_{HS}& =\int_D \int_D |J_f(\omega(x)) G_D^k(x,y)|^2 dx dy \\
& \le \sup_{x\in D}\left(\int_D (G_D^k(x,y))^2 dy\right)\left(\int_D |J_f(\omega(x))|^2 dx\right) < \infty.
\end{align*}
This yields that $D{\bar F}(\omega)$ is a Hilbert-Schmidt operator. 

From the expression \eqref{s2-0-1}, one checks that \eqref{s2.1} is satisfied. Moreover, from assertion 1. 
and the continuity of the map $J_f(\cdot)$, it is easy to verify that statement 2. holds.

For the proof of the third statement, we notice that the operator $D{\bar F}(\omega)$ is compact. Hence, by the Fredholm alternative it suffices to check that $\lambda=-1$ is not an eigenvalue. This fact is a consequence of the assumption {\bf (I)}. Indeed, if $\lambda=-1$ were an eigenvalue, there would exists a non null $h\in H$  satisfying 
\beqn
h + J_f(\omega)i(h)= 0.
\eeqn
 Equivalently,
\beqn
J_f^{-1}(\omega) h + \int_D G^k_D(\cdot,y) h(y)dy = 0. 
\eeqn
Take the inner product in $H$ with $h$ on each term of this identity. By property \textbf{(P)}, we obtain
\beqn
\left\langle J_f^{-1}(\omega) h ,h\right\rangle + a \left\Vert \int_D G_D^k(\cdot,y) h(y)dy\right\Vert^2 = 0.
\eeqn
By assumption {\bf (I)}, this implies that $h=0$.
\hfill{$\square$}
\end{proof}
\smallskip

In terms of  $\bar F$, the operator $\operatorname{T}$ defined in \eqref{defTop} is $\operatorname{T} = I_B + \bar F$. Hence, Proposition \ref{p1-s2-0} tell us that $\operatorname{T}$ satisfies the assumptions of \cite[Theorem 6.4]{K1982}. This yields the following result

\begin{prop}
\label{p2-s2-00} The hypotheses are as in Proposition 
\ref{p1-s2-0}. Denote by $\nu$ the law of $u = \operatorname{T}^{-1}(w)$.  Then, the probability $\nu$ is absolutely continuous with respect to $\mu$ (the law of $w$). Moreover, the density is given by
\beq
\label{s2-0-10}
\frac{d\nu}{d\mu}(\omega) = \left\vert {\det}_2(I_H + D \bar F(\omega))\right\vert \exp\left(-\delta(f(\omega)) - \frac{1}{2}\Vert f(\omega)\Vert_H^2\right),
\eeq
where $\det_2$ denotes the Carleman-Fredholm determinant, and $\delta$ denotes the divergence operator, also called the Skorohod integral operator (see \cite[Theorem 5.8.3]{Bo98} for a definition of this notion in this context).
\end{prop}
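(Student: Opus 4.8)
The plan is to obtain \eqref{s2-0-10} as a direct application of Kusuoka's anticipating change-of-variables theorem \cite[Theorem 6.4]{K1982} to the map $\operatorname{T} = I_{\mathcal{S}} + \bar F$ on the abstract Wiener space $(\mathcal{S}, H, \mu)$. First I would record the two structural facts that make the theorem applicable. On the one hand, by Theorem \ref{thm31} the operator $\operatorname{T}$ is a continuous bijection of $\mathcal{S}$ onto itself; since $\mathcal{S}$, endowed with the supremum norm, is a separable Banach space, hence a Polish space, the inverse $\operatorname{T}^{-1}$ is Borel measurable, so that $u = \operatorname{T}^{-1}(w)$ is a genuine $\mathcal{S}$-valued random variable and its law $\nu$ equals $(\operatorname{T}^{-1})_*\mu$. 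On the other hand, the shift $\bar F = i\circ F$ takes values in the Cameron--Martin space $i(H)$, and Proposition \ref{p1-s2-0} supplies exactly the regularity of $\bar F$ required in \cite{K1982}: it is $H$-$C^1$ (assertions 1 and 2, i.e.\ for every $\omega$ there is a Hilbert--Schmidt operator $D\bar F(\omega)$ satisfying \eqref{s2.1}, and $h\mapsto D\bar F(\omega + i(h))$ is continuous into the Hilbert--Schmidt operators), and $I_H + D\bar F(\omega)$ is invertible for every $\omega\in\mathcal{S}$ (assertion 3). Finally, assumption \textbf{(C)} makes $\bar F$ bounded from $\mathcal{S}$ into $i(H)$, which covers the remaining integrability requirements of \cite[Theorem 6.4]{K1982}.

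Granting these inputs, \cite[Theorem 6.4]{K1982} yields the identity
\begin{equation*}
\int_{\mathcal{S}} \psi(\operatorname{T}(\omega))\,\Lambda(\omega)\,\mu(d\omega) = \int_{\mathcal{S}} \psi(\omega)\,\mu(d\omega), \qquad \psi:\mathcal{S}\to\R \ \text{bounded measurable},
\end{equation*}
where
\begin{equation*}
\Lambda(\omega) = \left\vert{\det}_2\bigl(I_H + D\bar F(\omega)\bigr)\right\vert\, \exp\left(-\delta(\bar F(\omega)) - \tfrac{1}{2}\Vert \bar F(\omega)\Vert_{i(H)}^2\right),
\end{equation*}
the divergence $\delta$ being that of the $H$-valued random element corresponding to $\bar F$ under the isometry $i:H\to i(H)$. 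Here I would carry out the one genuine identification: since $\bar F(\omega) = i(f(\omega))$, that $H$-valued element is precisely $\omega\mapsto f(\omega)$, so $\Vert\bar F(\omega)\Vert_{i(H)} = \Vert f(\omega)\Vert_H$ and $\delta(\bar F(\omega)) = \delta(f(\omega))$ in the sense of the Skorohod integral (see \cite[Theorem 5.8.3]{Bo98}); consequently $\Lambda$ coincides with the right-hand side of \eqref{s2-0-10}.

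To conclude, I would fix a bounded measurable $\varphi:\mathcal{S}\to\R$ and apply the displayed identity with $\psi = \varphi\circ\operatorname{T}^{-1}$ (bounded and measurable by the first paragraph), so that $\psi\circ\operatorname{T} = \varphi$ and
\begin{equation*}
\int_{\mathcal{S}}\varphi(\omega)\,\Lambda(\omega)\,\mu(d\omega) = \int_{\mathcal{S}}\varphi\bigl(\operatorname{T}^{-1}(\omega)\bigr)\,\mu(d\omega) = \E[\varphi(u)] = \int_{\mathcal{S}}\varphi(\omega)\,\nu(d\omega).
\end{equation*}
Since $\varphi$ is arbitrary, this shows $\nu = \Lambda\cdot\mu$, i.e.\ $\nu\ll\mu$ with $d\nu/d\mu = \Lambda$, which is \eqref{s2-0-10}; in particular $\Lambda\in L^1(\mathcal{S},\mu)$ and $\delta(f(\cdot))$ is $\mu$-a.s.\ well defined, both being part of the conclusion of \cite[Theorem 6.4]{K1982}. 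The step I expect to require the most care is not the chain of identities above but the verification that the hypotheses of \cite[Theorem 6.4]{K1982} hold in the precise form in which that theorem is stated --- above all the global bijectivity of $\operatorname{T}$, which is exactly why Theorem \ref{thm31} was needed --- together with the bookkeeping of the Cameron--Martin embedding $i$ that turns the divergence term in Kusuoka's formula into the Skorohod integral $\delta(f(\omega))$ of the $L^2(D;\R^d)$-valued variable $f(\omega)$ rather than of $\bar F(\omega)$ itself.
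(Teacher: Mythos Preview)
Your proposal is correct and follows exactly the paper's approach: the paper's entire argument is the single sentence that Proposition \ref{p1-s2-0} verifies the hypotheses of \cite[Theorem 6.4]{K1982} for $\operatorname{T} = I_{\mathcal{S}} + \bar F$, whence the conclusion follows. You have simply unpacked this more carefully --- in particular invoking Theorem \ref{thm31} for bijectivity and making explicit the identification $\bar F(\omega) = i(f(\omega))$ that translates Kusuoka's density into \eqref{s2-0-10} --- which is entirely appropriate.
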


\begin{obs}
\label{r-s2-1}
For any $x\in D$, let $\pi_x: \mathcal{S}\longrightarrow \R^d$ be defined by $\pi_x(\omega)= \omega(x)$. Clearly, $\nu \ll \mu$ implies $\nu\circ \pi_x^{-1}\ll \mu\circ \pi_x^{-1}$. Since $\mu\circ \pi_x^{-1}$ is the law of the random vector $w(x)$, which is Gaussian, we infer that the law of $u(x)$ is absolutely continuous with respect to the Lebesgue measure on $\R^d$. 
\end{obs}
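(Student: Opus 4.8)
The plan is to obtain the assertion of Remark~\ref{r-s2-1} by transporting the absolute continuity $\nu\ll\mu$ of Proposition~\ref{p2-s2-00} through the evaluation map $\pi_x$, and then reducing the problem to the elementary fact that the Gaussian vector $w(x)$ has a non-degenerate law on $\R^d$.

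First I would fix $x\in D$ and consider the coordinate map $\pi_x:\mathcal{S}\to\R^d$, $\pi_x(\omega)=\omega(x)$. Since $|\pi_x(\omega)-\pi_x(\omega')|\le\|\omega-\omega'\|_{\infty}$, the map $\pi_x$ is continuous, hence Borel measurable. If $B\subset\R^d$ is a Borel set with $(\mu\circ\pi_x^{-1})(B)=0$, then $\mu(\pi_x^{-1}(B))=0$, and therefore $\nu(\pi_x^{-1}(B))=0$ because $\nu\ll\mu$; that is, $(\nu\circ\pi_x^{-1})(B)=0$. Thus $\nu\circ\pi_x^{-1}\ll\mu\circ\pi_x^{-1}$. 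Here $\nu\circ\pi_x^{-1}$ is the law of $u(x)$ and $\mu\circ\pi_x^{-1}$ is the law of $w(x)=\sigma\int_D G_D^k(x,y)\,W(dy)$, so it only remains to show that the law of $w(x)$ is absolutely continuous with respect to the Lebesgue measure on $\R^d$.

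To that end I would observe that $w(x)$ is a centered $\R^d$-valued Gaussian vector, since each component $w^i(x)=\sum_{j=1}^d\sigma_{ij}\int_D G_D^k(x,y)\,W^j(dy)$ is a Wiener integral; by the isometry of the stochastic integral and the independence of $W^1,\dots,W^d$ its covariance matrix is $\mathrm{Cov}(w(x))=\|G_D^k(x,\cdot)\|^2\,\sigma\sigma^{\top}$. Since $\sigma$ is non-singular, $\sigma\sigma^{\top}$ is positive definite, and $\|G_D^k(x,\cdot)\|>0$ for every $x\in D$: for $k=1$ this is explicit from \eqref{sa-10}, while for $k=2,3$ the function $|G_D^k(x,\cdot)|$ blows up at $y=x$ (the bounded harmonic corrector $\E_x(\Gamma^k(|B_\tau-y|))$ cannot cancel the singularity of $\Gamma^k(|x-y|)$ when $x$ lies at positive distance from $\partial D$), so it is certainly not the zero function. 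Hence $\mathrm{Cov}(w(x))$ is invertible, the law of $w(x)$ is a non-degenerate Gaussian measure on $\R^d$ and therefore has a strictly positive density with respect to Lebesgue measure; chaining, $\mathrm{law}(u(x))=\nu\circ\pi_x^{-1}\ll\mu\circ\pi_x^{-1}=\mathrm{law}(w(x))\ll\mathrm{Leb}_{\R^d}$, which gives the claim. The whole argument is just bookkeeping about push-forwards of measures; the only step deserving a word of justification---and the only conceivable obstacle---is the non-degeneracy of $\mathrm{Cov}(w(x))$, i.e.\ the strict positivity of $\|G_D^k(x,\cdot)\|$ at interior points.
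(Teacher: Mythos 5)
Your proof is correct and follows the same push-forward argument that the remark itself sketches. You go further than the paper in one useful respect: you rightly observe that ``Gaussian'' alone does not yield a Lebesgue density---one must verify non-degeneracy---and you check that $\sigma\sigma^{\top}$ is positive definite and that $\|G_D^k(x,\cdot)\|>0$ at interior points, a detail the paper leaves implicit (it is used without comment elsewhere, e.g.\ in \eqref{s3-3}).
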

%%%%%%
%%%%%%%%%%
%%%%%%%%%% hitting probabilities. Gaussian

\section{Gaussian solutions}
\label{s3}

In this section we consider the system \eqref{mildPoisseq} in the particular case $f=g=0$. Under this assumption, \eqref{mildPoisseq} gives an explicit expression of the solution, which clearly defines the $d$--dimensional Gaussian random vector:

%This section is devoted to establish bounds for the hitting probabilities of of the system \eqref{mildPoisseq} in the particular case $f=g=0$.
%Hence, we are concerned with the $d$--dimensional Gaussian random vector 
\beq
\label{s3-1}
w^i(x) = \sum_{j=1}^d \sigma_{ij} \int_D G_D^k(x,y) W^j(dy), \  x\in D, \ i = 1, \ldots, d.
\eeq
We are assuming that $\sigma = (\sigma_{ij})_{1\le i,j\le d}$ is a non-singular matrix. Therefore, without loss of generality, we can reduce the analysis of the stochastic process given in \eqref{s3-1} to
the case where $\sigma$ is the identity matrix in $\R^d$. By doing so, we are left to consider the Gaussian vector $v(x) = (v^i(x))_i$ with independent, identically distributed components
defined by
\beq
\label{s3-11}
v^i(x) = \int_D G_D^k(x,y) W^i(dy), \  x\in D, \ i = 1, \ldots, d.
\eeq
Its density is given by the formula
\beq
\label{s3-2}
p_{v(x)} (z) = (2\pi \sigma_x^2)^{-\frac{d}{2}} \exp\left(-\frac{|z|^2}{2\sigma_x^2}\right), \ z\in\R^d,
\eeq
where $\sigma_x = \Vert G_D^k(x,\cdot)\Vert$. %Here and throughout this section, we denote by $\langle\cdot,\cdot\rangle$ the inner product in $L^2(D)$ and by $\Vert\cdot\Vert$ the corresponding norm.

According to Corollary \ref{c-sa-1}, the mapping $x\in D\mapsto \sigma_x$ is continuous and therefore, $\inf_{x\in K} \sigma_x$ and $\sup_{x\in K} \sigma_x$ are both achieved on any compact subset $K\subset D$. Let $x_0, x_1\in K$ be such that 
\beq
\label{s3-3}
0<\sigma_{x_0} = \inf_{x\in K} \sigma_x \le \sup_{x\in K} \sigma_x = \sigma_{x_1} < \infty.
\eeq
Then, 
\beq
\label{s3-5}
\sup_{(z,x)\in\R^d\times K} p_{v(x)}(z) \le \left(2\pi \sigma_{x_0}^2\right)^{-\frac{d}{2}} < \infty,
\eeq
and for any compact set $\tilde K\subset \R^d$, 
\beq
\label{s3-500}
c_1\left(2\pi \sigma_{x_1}^2\right)^{-\frac{d}{2}} \le \inf_{(z,x)\in\tilde K\times K} p_{v(x)}(z),
\eeq 
where $c_1 = \inf_{z\in \tilde K}\exp\left(-\frac{|z|^2}{2\sigma_{x_0}^2}\right)$.
\medskip

%%%%%%Continuity of the trajectories of Gaussian process
\subsection{Sample paths of the process $v$}
\label{s3-1
}
From Theorem \ref{thm2bl3}, we already know that the sample paths of the Gaussian process defined by \eqref{s3-1} are H\"older continuous.
However, under the standing assumptions, more can be said. 
\smallskip

\noindent{\em Case $k=1$}
\smallskip

The trajectories of $\{v(x), x\in (0,b)\}$ are differentiable, a.s. Indeed, from the expression \eqref{GD1} and by applying the It\^o formula we have,
\beq
\label{tk1}
v^i(x) = \frac{x}{b} \int_0^b W^i(y) dy - \int_0^x W^i(y) dy, \ i=1,\ldots, d.
\eeq
(see \cite[Lemma 2.1]{BP1990}). 

\noindent{\em Case $k=2, 3$}
\smallskip

Let $D=B_1(0)$ and $D_0=B_{\rho_0}(0)$ with $\rho_0<1$. For any $x,y\in D$ and $\gamma$ arbitrarily small, define
\beqn
\tau(x,y) = 
\begin{cases}
|x-y|^{1-\gamma},  & \ {\text {if}}\ k=2,\\
|x-y|^{\frac{1}{2}}, & \ {\text {if}}\ k=3.
\end{cases}
\eeqn
According to the discussion in \cite[p. 164-167]{X2009}, and by applying the estimates \eqref{sa-32}, \eqref{sa-321} (for $k=2$) and \eqref{sa-42} (for $k=3$), we have the following results on the {\em uniform modulus of continuity} of the process $v$.
\begin{description}
\item {(1)} Extensions of the classical Garsia-Rodemich-Rumsey Lemma (see \cite[Theorems 4.1, 4.2]{X2009}) yield the existence of a random variable $A$ having moments of any order, such that, for any $x,y \in D_0$,
\beq
\label{grr}
|v(x)-v(y)| \le A \tau(x,y) \sqrt{\log(1+\tau(x,y)^{-1})}.
\eeq
\item{(2)} From results in \cite{KP2006}, one can obtain more information on the random variable $A$. Indeed, there exists a constant $c>0$ such that
\beq
\label{kr}
\E\left\{\exp\left(c\sup_{x,y\in D_0} \frac{|v(x)-v(y)|^2}{\log(1+\tau(x,y)^{-1})}\right)\right\} < \infty
\eeq
(see \cite[Corollary 4.4]{X2009}).
\item{(3)} By using entropy methods and the Gaussian isoperimetric inequality, we obtain
\beq
\label{mc1}
\limsup_{|h|\to 0}\frac{\sup_{x\in D_0, y\in B_h(0)}|v(x+y)-v(x)|}{\tau(0,y)\sqrt{\log(1+\tau(0,y)^{-1})}}\le C,
\eeq
where $C$ is a finite positive constant. Whether this estimate is sharp is an open question.
\end{description}
Clearly, the above results yield H\"older continuity of the sample paths, a.s.

Let $k=2$. Using Lemma \ref{l-sa-3} and arguing as in \cite[Chapter 5]{DSS2009}, we deduce the property:

{\sl For almost all $\omega$, the sample paths of the process $\{v(x), x\in D_0\}$ are H\"older continuous of degree $\alpha\in(0,1)$, though there are not Lipschitz continuous}.

Similarly, for $k=3$, using Lemma \ref{l-sa-4}, we have:

{\sl For almost all $\omega$, the sample paths of the process $\{v(x), x\in D_0\}$ are H\"older continuous of degree $\alpha\in(0,1/2)$, though there are not for $\alpha>1/2$}.

\subsection{Joint densities}
\label{S3-2}

For $k=1$, $D_0$ denotes a closed interval of $D= (0,b)$, and as in the previous section, for $k=2,3$, $D_0 = B_{\rho_0}(0)$, with $\rho_0\in(0,1)$. In this section we prove the following facts:
\begin{description}
\item{(a)} ${\text{Var}}\ v^i(x)>0$, for any $i=1,\ldots,d$, $x\in D_0$.
\item{(b)} ${\text{Corr}}(v^i(x_1), v^i(x_2))<1$, for any $i=1,\ldots,d$, and for each $x_1, x_2\in D_0$. 
\end{description}
We recall that, for any $i=1,\ldots,d$, 
\beqn
 {\text{Var}}\ v^i(x) = \Vert G_D^k(x,\cdot)\Vert^2:=\sigma_x^2,
 \eeqn
 and we will use the following notations:
 \begin{align*}
\sigma_{x_1,x_2}&:={\text{Cov}}(v^i(x_1), v^i(x_2)) = \left\langle G_D^k(x_1,\cdot), G_D^k(x_2,\cdot)\right\rangle,\\
\rho_{x_1,x_2}&:={\text{Corr}}(v^i(x_1), v^i(x_2)) = \frac{\sigma_{x_1,x_2}}{\sigma_{x_1}\sigma_{x_2}}.
\end{align*}
Because of the independence of the components of $v(x_1)$ and of $v(x_2)$, properties (a) and (b) imply the existence of joint density of the $2d$-dimensional vector 
\beqn
(v(x_1), v(x_2)), \ x_1, x_2\in D_0.
\eeqn

Property (a) follows trivially from \eqref{s3-3}. As for property (b), it is a consequence of property (a) and the next lemma.

\begin{lema}
\label{l-s4-1}
For any $x_1, x_2\in D_0$, $x_1\ne x_2$, we have
\beq
\label{s3-40}
\sigma_{x_1}^2\sigma_{x_2}^2 - \sigma^2_{x_1,x_2}>0.
\eeq
\end{lema}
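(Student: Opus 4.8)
The inequality \eqref{s3-40} is precisely the strict Cauchy--Schwarz inequality for the two functions $G_D^k(x_1,\cdot)$ and $G_D^k(x_2,\cdot)$ in the Hilbert space $H = L^2(D;\R^d)$: indeed $\sigma_{x_1}^2\sigma_{x_2}^2 - \sigma_{x_1,x_2}^2 = \Vert G_D^k(x_1,\cdot)\Vert^2\Vert G_D^k(x_2,\cdot)\Vert^2 - \langle G_D^k(x_1,\cdot), G_D^k(x_2,\cdot)\rangle^2$, which by the equality case of Cauchy--Schwarz is nonnegative, with equality if and only if $G_D^k(x_1,\cdot)$ and $G_D^k(x_2,\cdot)$ are linearly dependent as elements of $L^2(D)$. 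So the whole content of the lemma is to rule out proportionality: there is no constant $\lambda$ (necessarily $\lambda\neq 0$, since $\sigma_{x_i}>0$ by \eqref{s3-3}) with $G_D^k(x_1,y) = \lambda\, G_D^k(x_2,y)$ for a.e.\ $y\in D$. The plan is to derive a contradiction from such a proportionality by exploiting the defining property of the Green function, namely $-\Delta_y G_D^k(x_i,y) = \delta_{x_i}(y)$ in the sense of distributions on $D$, together with the boundary condition $G_D^k(x_i,\cdot)|_{\partial D}=0$.

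First I would treat $k=1$ by direct computation: from \eqref{GD1}, $G_D^1(x_1,y) = \lambda G_D^1(x_2,y)$ on $(0,b)$ would force, by looking at the two linear pieces $y<x_i$ and $y>x_i$ and the location of the ``kink'' (the point where $\partial_y G_D^1(x_i,\cdot)$ jumps by $-1$), that $x_1=x_2$, contradicting $x_1\neq x_2$; alternatively one reads it off directly from the explicit affine formulas. For $k=2,3$ I would argue distributionally: applying $-\Delta_y$ to the putative identity $G_D^k(x_1,\cdot)=\lambda G_D^k(x_2,\cdot)$ on $D$ gives $\delta_{x_1} = \lambda\,\delta_{x_2}$ as distributions, which is impossible when $x_1\neq x_2$ (test against a smooth bump supported near $x_1$ but away from $x_2$). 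A slightly more hands-on variant, avoiding distribution theory, uses that for $y$ away from both $x_1,x_2$ the function $G_D^k(x_i,\cdot)$ is harmonic and smooth, while near $y=x_i$ it blows up like $\Gamma^k(|x_i-y|)$ (i.e.\ like $|x_i-y|^{-1}$ for $k=3$, like $-\log|x_i-y|$ for $k=2$) whereas $G_D^k(x_{3-i},\cdot)$ stays bounded near $x_i$: comparing the behaviour as $y\to x_1$ shows the left side is unbounded and the right side bounded (since $\lambda$ is finite), a contradiction. Hence no proportionality holds and the Cauchy--Schwarz inequality is strict.

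The main obstacle is essentially bookkeeping rather than depth: one must be careful that the asymptotic singularity of $G_D^k(x_i,\cdot)$ at $y=x_i$ is genuinely captured by the $\Gamma^k$ term and is not cancelled by the harmonic corrector $\E_{x}(\Gamma^k(|B_\tau-y|))$ — but that corrector is harmonic in $y$ on $D$, hence smooth and bounded near the interior point $x_i$, so the singularity survives. (For $D=B_1(0)$ one may instead invoke the explicit formula \eqref{sa-31} quoted in Section \ref{sa} to see the singularity directly.) With $x_1, x_2\in D_0 \subset\subset D$ both interior points, these local considerations are unobstructed, and combined with property (a) from \eqref{s3-3} they yield $\rho_{x_1,x_2}^2 = \sigma_{x_1,x_2}^2/(\sigma_{x_1}^2\sigma_{x_2}^2) < 1$, which is property (b).
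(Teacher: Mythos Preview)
Your proof is correct and takes a genuinely different route from the paper's. Both proofs begin the same way, by recognising that equality in Cauchy--Schwarz forces $G_D^k(x_1,\cdot)=\lambda G_D^k(x_2,\cdot)$ for some $\lambda\ne 0$, and then seek a contradiction. You rule out proportionality by invoking the \emph{defining} property of the Green function: either the distributional identity $-\Delta_y G_D^k(x_i,\cdot)=\delta_{x_i}$ (giving $\delta_{x_1}=\lambda\delta_{x_2}$, absurd for $x_1\ne x_2$), or equivalently the unbounded singularity of $G_D^k(x_i,\cdot)$ at $y=x_i$ versus the boundedness of $G_D^k(x_{3-i},\cdot)$ there. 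The paper instead stays purely at the level of $L^2$ inner products: it first disposes of $\lambda=1$ via the lower bounds $\Vert G_D^k(x_1,\cdot)-G_D^k(x_2,\cdot)\Vert\ge c|x_1-x_2|^\xi>0$ from Lemmas~\ref{l-sa-2}, \ref{l-sa-3}, \ref{l-sa-4}, and then, for $\lambda\ne 1$, performs an algebraic manipulation (pairing $G_D^k(x_1,\cdot)-\lambda G_D^k(x_2,\cdot)$ against a suitable $\nu G_D^k(x_1,\cdot)-G_D^k(x_2,\cdot)$) to reduce again to $\Vert G_D^k(x_1,\cdot)-G_D^k(x_2,\cdot)\Vert^2\le 0$, contradicting those same lower bounds. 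Your argument is shorter and more conceptual, and has the advantage of being independent of the quantitative estimates of Section~\ref{sa}; the paper's argument, by contrast, keeps everything within the $L^2$ framework already set up and recycles the increment lower bounds that are needed elsewhere anyway. Note also that your distributional argument applies uniformly to $k=1$ as well (the kink of $y\mapsto x\wedge y - xy/b$ is precisely $-\partial_y^2 G_D^1(x,\cdot)=\delta_x$), so the separate treatment you sketch for $k=1$ is not strictly necessary.
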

\begin{proof}
We argue by contradiction. Assume that $\sigma_{x_1}^2\sigma_{x_2}^2 - \sigma^2_{x_1,x_2}=0$.
Then, $\lambda\in \R\setminus\{0\}$ (depending on $x_1$, $x_2$) would exist satisfying $v(x_1) = \lambda v(x_2)$.
This implies $\Vert G_D^k(x_1,\cdot)- \lambda G_D^k(x_2,\cdot)\Vert = 0$ or equivalently, $G_D^k(x_1,y)- \lambda G_D^k(x_2,y)=0$, for almost every $y$ (with respect to the Lebesgue measure). 
\smallskip

\noindent{\em Case $\lambda=1$}. The condition $\Vert G_D^k(x_1,\cdot)- G_D^k(x_2,\cdot)\Vert = 0$ 
yields a contradiction with the lower bounds given in \eqref{sa-21},
\eqref{sa-321}, \eqref{sa-42}, for $k=1$, $k=2$, $k=3$, respectively.
\smallskip

\noindent {\em Case $\lambda\neq1$}. The condition $G_D^k(x_1,y)- \lambda G_D^k(x_2,y)=0$, for almost every $y$ implies that 
for any $f\in L^2(D)$,
\beq
\label{s3-41}
\langle G_D^k(x_1,y)- \lambda G_D^k(x_2,y), f\rangle=0.
\eeq
Moreover,
\beq
\label{s3-42}
\Vert G_D^k(x_1,\cdot)\Vert = \lambda \Vert G_D^k(x_2,\cdot)\Vert.
\eeq
With this identity, and by developing the square of $\Vert G_D^k(x_1,\cdot)- \lambda G_D^k(x_2,\cdot)\Vert$, we obtain
 \beq
\label{s3-43}
\lambda \Vert G_D^k(x_2, \cdot)\Vert^2 = \langle G_D^k(x_1,\cdot), G_D^k(x_2,\cdot)\rangle.
\eeq
Choose $f = \nu G_D^k(x_1,\cdot)-G_D^k(x_2, \cdot)$, with $\nu\in \R$ to be determined later. The identity \eqref{s3-41} implies,
\begin{align}
\label{s3-44}
0 & = \langle G_D^k(x_1,\cdot)- \lambda G_D^k(x_2,\cdot), \nu G_D^k(x_1,\cdot)-G_D^k(x_2, \cdot)\rangle\notag\\
& = \Vert G_D^k(x_1,\cdot)- G_D^k(x_2,\cdot)\Vert^2 + (1-\lambda) \langle G_D^k(x_1,\cdot), G_D^k(x_2,\cdot)\rangle\notag\\
& \quad -(1-\lambda) \Vert G_D^k(x_2, \cdot)\Vert^2 + (\nu-1)\Vert G_D^k(x_1, \cdot)\Vert^2\notag\\
& \quad -(\nu-1)\langle G_D^k(x_1,\cdot), G_D^k(x_2,\cdot)\rangle.
\end{align}
By  applying \eqref{s3-42}, \eqref{s3-43} to \eqref{s3-44}
we obtain
\begin{align}
\label{s3-45}
0 & = \langle G_D^k(x_1,\cdot)- \lambda G_D^k(x_2,\cdot), \nu G_D^k(x_1,\cdot)-G_D^k(x_2, \cdot)\rangle\notag\\
&  =\Vert G_D^k(x_1,\cdot)- G_D^k(x_2,\cdot)\Vert^2 \notag\\
& \quad + (\lambda-1)[\lambda(\nu-2)+1] \Vert G_D^k(x_2,\cdot)\Vert^2.
\end{align}
Assume first that $\lambda>1$. By choosing $\nu>2-\frac{1}{\lambda}$, the factor $(\lambda-1)[\lambda(\nu-2)+1]$ in \eqref{s3-45} is positive. Hence, we obtain
\begin{align*}
0 & = \langle G_D^k(x_1,\cdot)- \lambda G_D^k(x_2,\cdot), \nu G_D^k(x_1,\cdot)-G_D^k(x_2, \cdot)\rangle\notag\\
& \ge \Vert G_D^k(x_1,\cdot)- G_D^k(x_2,\cdot)\Vert^2,
\end{align*}
which, arguing as for the case $\lambda=1$, yields a contradiction.

If $\lambda < 1$, we choose $\nu<2-\frac{1}{\lambda}$ to obtain that $(\lambda-1)[\lambda(\nu-2)+1]>0$. Similarly as above,  we arrive at a contradiction.

The proof of \eqref{s3-40} is complete.
 \hfill{$\square$}
\end{proof}

\begin{lema}
 \label{l-s4-2}
Let $m_{x_1,x_2}=\frac{\sigma_{x_1,x_2}}{\sigma_{x_1}^2}$ be the conditional mean of $v^i(x_2)$ given $v^i(x_1)$, $i=1,\ldots, d$.
 Then, there exists a constant $C>0$ such that for all $x_1, x_2\in D_0$,
 \beq
 \label{s3-46}
 \left\vert 1-m_{x_1,x_2}\right\vert \le C \Vert v(x_1)-v(x_2)\Vert_{L^2(\Omega; \R^d)}.
 \eeq
  \end{lema}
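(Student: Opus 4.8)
The plan is to reduce \eqref{s3-46} to an $L^2(D)$-estimate for the increment of the Green function, and then to close the argument with Cauchy--Schwarz and the strict positivity of $\sigma_x$ on the compact set $D_0$.

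First I would record the elementary identity coming from the independence and identical distribution of the components $v^i(x)=\int_D G_D^k(x,y)\,W^i(dy)$, namely
\beqn
\Vert v(x_1)-v(x_2)\Vert_{L^2(\Omega;\R^d)}^2 = d\,\big\Vert G_D^k(x_1,\cdot)-G_D^k(x_2,\cdot)\big\Vert^2 ,
\eeqn
so that, up to the factor $\sqrt d$, the right-hand side of \eqref{s3-46} is exactly the $L^2(D)$-norm of the increment of $G_D^k$.

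Next I would rewrite the left-hand side as
\beqn
1-m_{x_1,x_2} = \frac{\sigma_{x_1}^2-\sigma_{x_1,x_2}}{\sigma_{x_1}^2}
 = \frac{1}{\sigma_{x_1}^2}\,\big\langle G_D^k(x_1,\cdot),\, G_D^k(x_1,\cdot)-G_D^k(x_2,\cdot)\big\rangle ,
\eeqn
using $\sigma_{x_1}^2=\langle G_D^k(x_1,\cdot),G_D^k(x_1,\cdot)\rangle$ and $\sigma_{x_1,x_2}=\langle G_D^k(x_1,\cdot),G_D^k(x_2,\cdot)\rangle$. Cauchy--Schwarz in $L^2(D)$ then gives $|\sigma_{x_1}^2-\sigma_{x_1,x_2}|\le \sigma_{x_1}\Vert G_D^k(x_1,\cdot)-G_D^k(x_2,\cdot)\Vert$, hence
\beqn
\big\vert 1-m_{x_1,x_2}\big\vert \le \frac{\Vert G_D^k(x_1,\cdot)-G_D^k(x_2,\cdot)\Vert}{\sigma_{x_1}} .
\eeqn

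Finally I would bound $\sigma_{x_1}$ from below uniformly on $D_0$: by Corollary \ref{c-sa-1} the map $x\mapsto\sigma_x$ is continuous and $D_0$ is compact, so $\sigma_{x_0}:=\inf_{x\in D_0}\sigma_x$ is attained and, by \eqref{s3-3} (equivalently, by property (a), i.e.\ the non-degeneracy of $v^i(x)$), it is strictly positive. Combining the three displays yields
\beqn
\big\vert 1-m_{x_1,x_2}\big\vert \le \frac{1}{\sigma_{x_0}}\big\Vert G_D^k(x_1,\cdot)-G_D^k(x_2,\cdot)\big\Vert = \frac{1}{\sigma_{x_0}\sqrt d}\,\Vert v(x_1)-v(x_2)\Vert_{L^2(\Omega;\R^d)} ,
\eeqn
which is \eqref{s3-46} with $C=(\sigma_{x_0}\sqrt d)^{-1}$. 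There is no serious obstacle in this argument; the only ingredient that is not purely formal is the strict positivity of $\inf_{x\in D_0}\sigma_x$, which is already guaranteed by \eqref{s3-3}.
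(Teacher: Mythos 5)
Your proof is correct and is exactly the sort of elementary argument the paper alludes to when it says the lemma ``follows easily from the definition.'' The reduction $1-m_{x_1,x_2}=\sigma_{x_1}^{-2}\langle G_D^k(x_1,\cdot),\,G_D^k(x_1,\cdot)-G_D^k(x_2,\cdot)\rangle$, followed by Cauchy--Schwarz and the uniform positivity $\inf_{\bar D_0}\sigma_x>0$ from \eqref{s3-3}, is clean and self-contained, and the scaling identity $\Vert v(x_1)-v(x_2)\Vert_{L^2(\Omega;\R^d)}=\sqrt{d}\,\Vert G_D^k(x_1,\cdot)-G_D^k(x_2,\cdot)\Vert$ converts it to the stated form. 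The route in the cited reference \cite{DSS2010}, and echoed in the paper's own proof of Lemma~\ref{l-s4-3}, is instead to split $\sigma_{x_1}^2-\sigma_{x_1,x_2}$ as $\sigma_{x_1}(\sigma_{x_1}-\sigma_{x_2})+(\sigma_{x_1}\sigma_{x_2}-\sigma_{x_1,x_2})$ and control the two pieces separately, via the reverse triangle inequality $|\sigma_{x_1}-\sigma_{x_2}|\le\Vert G_D^k(x_1,\cdot)-G_D^k(x_2,\cdot)\Vert$ and the bound $0\le\sigma_{x_1}\sigma_{x_2}-\sigma_{x_1,x_2}\le\tfrac12\Vert G_D^k(x_1,\cdot)-G_D^k(x_2,\cdot)\Vert^2$ (the latter absorbed into a linear bound since $\delta$ is bounded on $D_0$). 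Your single Cauchy--Schwarz inequality collapses that two-term decomposition into one step and is slightly more economical; both arguments rest on the same two inputs, namely non-degeneracy of $\sigma_x$ on $D_0$ and the $L^2$-isometry between increments of $v$ and increments of $G_D^k$, and lead to the same result.
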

  The proof of this lemma follows easily from the definition of $m_{x_1,x_2}$. We refer the reader to \cite[p. 1359]{DSS2010} for details.
  %\begin{proof}
 % By definition  $\left\vert 1-m_{x_1,x_2}\right\vert = \frac{\left\vert \sigma_{x_1}^2-\sigma_{x_1,x_2}\right\vert}{\sigma_{x_1}^2}$. Owing to \eqref{s3-3}, and by applying Schwarz' inequality, we have
 % \begin{align*}
 % \left\vert 1-m_{x_1,x_2}\right\vert& \le C \left\vert \sigma_{x_1}^2-\sigma_{x_1,x_2}\right\vert\\
 % & = C  \left\vert \Vert v(x_1)-v(x_2)\Vert_{L^2(\Omega; \R^d)}^2 + E\left((v_{x_1}-v_{x_2}) v_{x_2}\right)\right\vert\\
 %& \le C \left(\Vert v(x_1)-v(x_2)\Vert_{L^2(\Omega; \R^d)}^2 + \Vert v(x_1)-v(x_2)\Vert_{L^2(\Omega; \R^d)}\right)\\
 % & \le C \Vert v(x_1)-v(x_2)\Vert_{L^2(\Omega; \R^d)}. 
 % \end{align*}
 % This proves \eqref{s3-46}.
  %\hfill{$\square$}

 % \end{proof}

 \begin{lema}
 \label{l-s4-3}
Let $\tau^2_{x_1,x_2}= \sigma_{x_2}^2(1-\rho_{x_1,x_2}^2)$ be the conditional variance of $v^i(x_2)$ given $v^i(x_1)$, $i=1,\ldots,d$. 
 Then, there exists a constant $C_2>0$ such that for all $x_1, x_2\in D_0$,
 \beq
 \label{s3-47}
 \tau_{x_1,x_2} \le C_2 \Vert v(x_1)-v(x_2)\Vert_{L^2(\Omega; \R^d)}.
 \eeq
 \end{lema}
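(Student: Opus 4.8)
The plan is to use the variational characterization of the conditional variance of a bivariate Gaussian vector. Fix $i\in\{1,\dots,d\}$ and $x_1,x_2\in D_0$. Since the components $v^1,\dots,v^d$ are i.i.d.,
\beqn
\Vert v(x_1)-v(x_2)\Vert_{L^2(\Omega;\R^d)}^2 = d\,\E\big[(v^i(x_1)-v^i(x_2))^2\big],
\eeqn
so it suffices to bound $\tau_{x_1,x_2}^2$ by $\E[(v^i(x_1)-v^i(x_2))^2]$.

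First I would recall that, because $v^i(x_1)$ and $v^i(x_2)$ are jointly Gaussian and $\sigma_{x_1}^2=\Vert G_D^k(x_1,\cdot)\Vert^2>0$ on $D_0$ by \eqref{s3-3}, the conditional expectation $\E[v^i(x_2)\mid v^i(x_1)]=m_{x_1,x_2}v^i(x_1)$ is linear, and hence the conditional variance coincides with the minimal mean-square error of a linear predictor:
\beqn
\tau_{x_1,x_2}^2 = \E\big[(v^i(x_2)-m_{x_1,x_2}v^i(x_1))^2\big] = \min_{\lambda\in\R}\E\big[(v^i(x_2)-\lambda v^i(x_1))^2\big].
\eeqn
The last identity follows from the elementary minimization of $\lambda\mapsto \sigma_{x_2}^2-2\lambda\sigma_{x_1,x_2}+\lambda^2\sigma_{x_1}^2$, whose minimizer is $\lambda=m_{x_1,x_2}=\sigma_{x_1,x_2}/\sigma_{x_1}^2$ and whose minimum value is precisely $\sigma_{x_2}^2(1-\rho_{x_1,x_2}^2)$.

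Then I would simply specialize to $\lambda=1$ in the displayed minimum, which gives
\beqn
\tau_{x_1,x_2}^2 \le \E\big[(v^i(x_2)-v^i(x_1))^2\big] = \frac1d\,\Vert v(x_1)-v(x_2)\Vert_{L^2(\Omega;\R^d)}^2,
\eeqn
i.e. \eqref{s3-47} holds with $C_2=d^{-1/2}$. There is no genuine obstacle here: the content is merely that the residual (conditional) variance is dominated by the variance of any fixed centered linear combination, and the combination $v^i(x_2)-v^i(x_1)$ is exactly the increment whose $L^2(\Omega)$-norm appears on the right-hand side of \eqref{s3-47}. The only minor point requiring care — already available to us — is that $\sigma_{x_1}>0$ uniformly on $D_0$, so that the conditioning is well defined and $m_{x_1,x_2}$ is finite; this is guaranteed by \eqref{s3-3}.
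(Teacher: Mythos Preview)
Your proof is correct and takes a genuinely different route from the paper's. The paper proceeds by the algebraic factorization
\beqn
\sigma_{x_1}^2\sigma_{x_2}^2-\sigma_{x_1,x_2}^2=\tfrac14\big[\delta(x_1,x_2)^2-(\sigma_{x_1}-\sigma_{x_2})^2\big]\big[(\sigma_{x_1}+\sigma_{x_2})^2-\delta(x_1,x_2)^2\big],
\eeqn
bounds $|\sigma_{x_1}-\sigma_{x_2}|\le\delta(x_1,x_2)$ via the triangle inequality, and then uses the two-sided control $0<\inf_{D_0}\sigma_x\le\sup_{D_0}\sigma_x<\infty$ from \eqref{s3-3} to absorb the remaining factors into a constant. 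Your argument instead invokes the variational characterization $\tau_{x_1,x_2}^2=\min_{\lambda}\E[(v^i(x_2)-\lambda v^i(x_1))^2]$ and specializes to $\lambda=1$. This is more elementary, avoids the algebra entirely, and yields the explicit constant $C_2=d^{-1/2}$; moreover it does not need the uniform upper and lower bounds on $\sigma_x$ (only $\sigma_{x_1}>0$ to make the conditioning meaningful, which is immediate). The paper's approach, on the other hand, makes transparent how the constant depends on the range of $\sigma_x$ over $D_0$, which is occasionally useful when tracking constants through later estimates.
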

 \begin{proof}
For any $x_1, x_2\in D$, let
\beq
\label{metric}
\delta(x_1,x_2) := \Vert v(x_1)-v(x_2)\Vert_{L^2(\Omega; \R^d)} = \Vert G(x_1,\cdot) - G(x_2,\cdot)\Vert,
\eeq
be the canonical pseudo-metric associated with the Gaussian process $v$. 

With simple computations, we obtain
\begin{align}\label{sigmas}
\sigma^2_{x_2}\sigma^2_{x_1}-\sigma_{x_1,x_2}^2
& = \frac{1}{4}\left[\delta(x_1,x_2)^2-(\sigma_{x_2}-\sigma_{x_1})^2\right]\left[(\sigma_{x_2}+\sigma_{x_1})^2-\delta(x_1,x_2)^2\right].
\end{align}
By the triangular inequality,
\begin{align*}
(\sigma_{x_1}-\sigma_{x_2})^2 &= \left\vert  \Vert G(x_1,\cdot)\Vert - \Vert G(x_2,\cdot)\Vert\right\vert^2\\
& \le \Vert G(x_1,\cdot)- G(x_2,\cdot)\Vert^2
= \delta(x_1,x_2)^2.
\end{align*}
Hence, the first factor on the right-hand side of \eqref{sigmas} is nonnegative. Moreover, 
we have proved in Lemma \ref{l-s4-1} that $1-\rho_{x_1,x_2}^2>0$. Hence, using \eqref{s3-3}, we have the following upper bounds:
\begin{align}
\label{s3-48}
1-\rho_{x_1,x_2}^2& \le C\left[\delta(x_1,x_2)^2-(\sigma_{x_2}-\sigma_{x_1})^2\right](\sigma_{x_2}+\sigma_{x_1})^2\notag\\
& \le C\left\{\delta(x_1,x_2)^2\left(\sigma^2_{x_2}+\sigma^2_{x_1}\right) +(\sigma^2_{x_2}-\sigma^2_{x_1})^2\right\}\notag\\
& \le C \left[\delta(x_1,x_2)^2 + (\sigma_{x_1}-\sigma_{x_2})^2\right]\notag\\
& \le C\delta(x_1,x_2)^2
\end{align}
The inequality \eqref{s3-47} is a consequence of \eqref{s3-3} and \eqref{s3-48}.
\hfill{$\square$}

\end{proof}

\subsection{Upper and lower bounds of the canonical metric}
\label{sa}

In this section, we prove upper and lower bounds for the canonical pseudo-metric relative to the Gaussian process $v$ given in \eqref{metric}. This is equivalent to establish bounds from above and from below for $\Vert G_D^k(x_1,\cdot) - G_D^k(x_2,\cdot)\Vert$. 
%%%%%%k=1

\begin{lema}\label{l-sa-2}
Let $k=1$ and $D=(0,b)$, $b>0$. For any $x_1,\, x_2\in D$, we have
\beq
\label{sa-21}
\left(\frac{b}{3}\right)^{\frac{1}{2}}|x_1-x_2|\le \|G_D^1(x_1,\cdot)-G_D^1(x_2,\cdot)\|\leq \left(\frac{7b}{3}\right)^{\frac{1}{2}}|x_1-x_2|.
\eeq
\end{lema}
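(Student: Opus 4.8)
The plan is to work directly with the explicit Green function \eqref{GD1}, $G_D^1(x,y) = x\wedge y - \frac{xy}{b}$, and compute the $L^2$-norm of the increment $G_D^1(x_1,\cdot) - G_D^1(x_2,\cdot)$ as explicitly as possible. Without loss of generality assume $x_1 < x_2$. The difference $h(y) := G_D^1(x_1,y) - G_D^1(x_2,y)$ splits naturally into the contribution from the $x\wedge y$ part and the contribution from the linear part $-\frac{xy}{b}$. The linear part contributes $-\frac{(x_1-x_2)y}{b} = \frac{(x_2-x_1)y}{b}$, a clean function of $y$. The $x\wedge y$ part, $x_1\wedge y - x_2\wedge y$, is identically $0$ for $y \le x_1$, equals $y - x_1$ for $x_1 \le y \le x_2$ (wait: for $x_1 \le y \le x_2$ we get $y - x_2 < 0$, so it is $-(x_2-y)$), and equals $x_1 - x_2 = -(x_2-x_1)$ for $y \ge x_2$. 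So $h$ is an explicit piecewise-linear function of $y$ on the three intervals $(0,x_1)$, $(x_1,x_2)$, $(x_2,b)$, and one simply computes $\int_0^b h(y)^2\,dy$ by integrating a quadratic on each piece.

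The key steps, in order: (i) write $h(y)$ explicitly on each of the three subintervals; (ii) integrate $h(y)^2$ over each piece, obtaining a polynomial expression in $x_1, x_2$ and $b$; (iii) after simplification, extract the factor $(x_2-x_1)^2$ and bound the remaining factor, which will be a polynomial in $x_1, x_2, b$ with $0 \le x_1 \le x_2 \le b$, from above and below by constants times $b$. For the lower bound, it should suffice to keep only one nonnegative piece of the integral — for instance the contribution of the interval $(x_1,x_2)$ where $h(y) = y - x_2 - \frac{(x_2-x_1)y}{b}$, or even simpler, to note that one can discard terms and retain enough to produce the constant $b/3$; alternatively one can use the $\sigma_{x_i}$ identities, but the direct route is cleaner. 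For the upper bound, one bounds each of the (at most three) quadratic integrals by $(x_2-x_1)^2$ times something at most $\frac{7b}{3}$; the crude bounds $0 \le x_i \le b$ and $|x_1\wedge y - x_2\wedge y| \le x_2 - x_1$ make every piece controllable. I would double-check the numerical constant $7b/3$ by doing the integration carefully; a convenient shortcut for the upper bound is to use $\|h\| \le \|h_1\| + \|h_2\|$ where $h_1$ is the $x\wedge y$ difference and $h_2(y) = \frac{(x_2-x_1)y}{b}$, compute $\|h_2\|^2 = \frac{(x_2-x_1)^2}{b^2}\int_0^b y^2\,dy = \frac{(x_2-x_1)^2 b}{3}$, and $\|h_1\|^2 \le (x_2-x_1)^2\big[(x_2-x_1) + (b-x_2)\big] \le (x_2-x_1)^2 b$, then combine — though this gives $(1+\sqrt{1/3})^2 b \approx 2.15 b$, slightly better than $7b/3$, so the stated constant is safely attainable and the triangle-inequality route may even be the intended one.

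The main obstacle is essentially bookkeeping: keeping the three cases straight and not making a sign error in the piecewise description of $x_1\wedge y - x_2\wedge y$, together with pinning down the explicit constants so that they match $(b/3)^{1/2}$ and $(7b/3)^{1/2}$. There is no conceptual difficulty — it is a finite, elementary computation with the explicit kernel — but the lower bound requires a little care to ensure one does not throw away so much that the remaining contribution fails to scale like $b\,(x_2-x_1)^2$; retaining the $h_2$ (linear) part alone already gives $\|h\|^2 \ge$ something, but $h_2$ and $h_1$ have opposite-signed overlaps, so for the lower bound it is safest to compute the full integral exactly and then simplify, rather than to discard pieces. I would therefore carry out the exact integration for the lower bound and use the triangle-inequality split (or exact integration) for the upper bound.
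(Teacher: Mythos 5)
Your plan is the paper's plan (explicit piecewise integration of the increment), and the caution you flag about the ``opposite‑signed overlap'' between the $x\wedge y$ part and the linear part is exactly the right instinct — but push it to its conclusion and you will find that the statement as written is false, and that the paper's own intermediate computation is wrong. Assuming $x_1<x_2$, the three piecewise integrals combine, using $x_1^3+(b-x_2)^3=\big(x_1+(b-x_2)\big)\big(x_1^2-x_1(b-x_2)+(b-x_2)^2\big)$ together with $x_1+(x_2-x_1)+(b-x_2)=b$, into the clean identity
\begin{equation*}
\|G_D^1(x_1,\cdot)-G_D^1(x_2,\cdot)\|^2 \;=\; |x_1-x_2|^2\,\frac{x_1^2-x_1(b-x_2)+(b-x_2)^2}{3b},
\end{equation*}
and not the expression $\frac{b}{3}+\frac{x_1^2+x_1x_2+x_2^2}{3b}+x_1$ asserted in the paper. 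Since $x_1^2-x_1(b-x_2)+(b-x_2)^2\le\big(x_1+(b-x_2)\big)^2\le b^2$, the upper bound of \eqref{sa-21} in fact holds with the sharper constant $(b/3)^{1/2}$. But the same factor tends to $0$ as $x_1\to0$, $x_2\to b$, so the lower bound $(b/3)^{1/2}|x_1-x_2|$ \emph{fails} on $(0,b)$. Concretely, for $b=1$, $x_1=0.2$, $x_2=0.6$, the left side of \eqref{sa-21} equals $\sqrt{0.0064}\approx0.08$, whereas $(1/3)^{1/2}\cdot0.4\approx0.23$. One can also see this without any computation: $\|G_D^1(x_1,\cdot)-G_D^1(x_2,\cdot)\|\le\sigma_{x_1}+\sigma_{x_2}$, and by \eqref{sa-10}, $\sigma_x=x(b-x)/\sqrt{3b}$ vanishes at the endpoints, which is incompatible with a lower bound of order $\sqrt{b/3}\,|x_1-x_2|$ for $|x_1-x_2|$ of order $b$.

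So the step you were worried about (``not throwing away so much that the remaining contribution fails to scale like $b(x_2-x_1)^2$'') is not a matter of care — it genuinely cannot be done uniformly over $D$, because the cancellation you noticed is real and complete near the boundary. The correct, and for the rest of the paper sufficient, statement is the restricted one: for $x_1,x_2$ in a compact $I=[\alpha,\beta]\subset(0,b)$ one has $x_1^2-x_1(b-x_2)+(b-x_2)^2\ge\frac{3}{4}\max\big(\alpha,\,b-\beta\big)^2>0$, hence a Lipschitz lower bound $c_I|x_1-x_2|$ with $c_I$ depending on $d(I,\partial D)$. (That is what is actually used in \eqref{delta}, where $x_1,x_2\in I$, and Lemma~\ref{l-s4-1} needs only the strict positivity that the exact formula already gives for $x_1,x_2\in(0,b)$.) A small arithmetic correction to your triangle-inequality shortcut for the upper bound: $(1+1/\sqrt3)^2\approx2.49>7/3$, so that route gives a slightly \emph{worse} constant than the paper's, not a better one; the sharp constant $b/3$ falls out of the exact integral in any case.
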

\begin{proof}
Using the expression \eqref{GD1}, we clearly have
\beqn
\|G_D^1(x_1,\cdot)-G_D^1(x_2,\cdot)\|^2 = |x_1-x_2|^2 \int_0^b\left(\frac{x_2\wedge y-x_1\wedge y}{x_2-x_1}-\frac{y}{b}\right)^2dy.
\eeqn
The integral on the right-hand side of this equality is $\frac{b}{3}+\frac{x_2^2+x_2 x_1+x_1^2}{3b}+x_1$. On $(0,b)$ this expression is bounded from above by $\frac{7b}{3}$, and from below by $\frac{b}{3}$. This yields \eqref{sa-21}.
\hfill{$\square$}
\end{proof}
\medskip

%%%%%%%k=2,3
For $k=2, 3$, $D= B_1(0)$, we will use the following formulas for the Green function (see for instance \cite[[pg. 19]{GT2001}):
\begin{align}
G_D^k (x,y)& = \Gamma^k((|x-y|) - \Gamma^k\left[|y|\left\vert x-\frac{y}{|y|^2}\right\vert\right], & y\ne 0,\label {sa-31}\\
G_D^k (x,y)& = \Gamma^k(|x|) - \Gamma^k(1), &  y=0. \notag
\end{align}
with $\Gamma^k$ defined in \eqref{sa-310}.
%\beqn
%\\Gamma^k(z)= \begin{cases}
%\\log |z|, & k=2,\\
%\|z|^{-1}, & k=3.
%\\end{cases}
%\\eeqn
%G_D^2(x,y)& = -\frac{1}{2\pi}\left(\log |x-y| - \log\left[|y|\left\vert x-\frac{y}{|y|^2}\right\vert\right]\right),\label{sa-31}\\
%G_D^3(x,y)& = 4\pi\left(|x-y|^{-1} - \left[|y|\left\vert x-\frac{y}{|y|^2}\right\vert\right]^{-1}\right). \label{sa-41}

For every $x,y\in D$, define
\beq\label{sa-311}
L^k_x(y) = \Gamma^k(|x-y|), \ \  S^k_x(y)= \Gamma^k\left[|y|\left\vert x-\frac{y}{|y|^2}\right\vert\right],
\eeq
so that for $y\ne 0$,
\beq
\label{sa-3111}
G_D^k (x,y) = L^k_x(y) - S^k_x(y).
\eeq

Notice that for any $y\in D$, $x\rightarrow S^k_x(y)$ is a harmonic function, and $S^k_x(y)=L^k_x(y)$ for $y\in \p D$. 

Clearly, for any $x_1, x_2\in D$,
\beqn
\|G_D^k(x_1,\cdot)-G_D^k(x_2,\cdot)\|\leq \|L^k_{x_1}-L^k_{x_2}\|+\|S^k_{x_1}-S^k_{x_2}\|.
\eeqn
\smallskip

%%%%%%%k=2

\begin{lema}\label{l-sa-3}
Let $k=2$ and $D=B_1(0)$. Fix $\rho_0<1$. 
\begin{enumerate}
\item There exists a positive constant $C$ such that
\beq
\label{sa-32}
\|G_D^2(x_1,\cdot)-G_D^2(x_2,\cdot)\|\leq C|x_1-x_2|\left\vert\log^2(|x_1-x_2|) - \log (|x_1-x_2|) + 1\right\vert^{\frac{1}{2}},
\eeq
for any $x_1,\, x_2\in\bar{B}_{\rho_0}(0)$. The constant $C$ above is of the form 
$\frac{c}{(1-\rho_0)^2}$ where $c>0$ is a multiple of $\pi^{-\frac{1}{2}}$.

Therefore,
\begin{align}
\label{sa-320}
&\|G_D^2(x_1,\cdot)-G_D^2(x_2,\cdot)\|\notag\\
&\qquad \leq C\left[|x_1-x_2||\log(|x_1-x_2|)|1_{\{|x_1-x_2|\le e^{-1}\}} + |x_1-x_2|1_{\{|x_1-x_2|> e^{-1}\}}\right], 
\end{align}
for any $x_1,\, x_2\in\bar{B}_{\rho_0}(0)$, where $C$ is a constant of the same type as in \eqref{sa-32}.

\item There exists a positive constant $\bar C$ such that
\beq
\label{sa-321}
\|G_D^2(x_1,\cdot)-G_D^2(x_2,\cdot)\|\ge \bar C|x_1-x_2|,
\eeq
for any $x_1,\, x_2\in\bar{B}_{\rho_0}(0)$. The constant $\bar C$ above is a multiple of $\pi^{-\frac{1}{2}}$.
\end{enumerate}

\end{lema}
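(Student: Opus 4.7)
The plan is to work with the decomposition~\eqref{sa-3111}, $G_D^2(x,\cdot) = L_x^2 - S_x^2$, treating the singular piece $L_x^2(y) = C_2\log|x-y|$ and the smooth harmonic correction $S_x^2(y) = \frac{C_2}{2}\log(|y|^2|x|^2 - 2x\cdot y + 1)$ separately. The key structural fact about $S$ is that the argument of its logarithm satisfies $|y|^2|x|^2 - 2x\cdot y + 1 \ge (1-|x||y|)^2 \ge (1-\rho_0)^2 > 0$ for $x\in\bar B_{\rho_0}(0)$ and $y\in B_1(0)$, so that $S$ is smooth in $x$ uniformly in $y$.

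For the upper bound~\eqref{sa-32}, I first control $\|S_{x_1}^2 - S_{x_2}^2\|$: computing $\nabla_x S_x^2(y)$ and using the uniform lower bound on its denominator produces $|\nabla_x S_x^2(y)| \le c(1-\rho_0)^{-2}$ uniformly in $y$, whence the mean-value theorem gives the pointwise bound $|S_{x_1}^2(y) - S_{x_2}^2(y)| \le c(1-\rho_0)^{-2}|x_1-x_2|$, and integration over $D$ yields an $L^2$-bound that is already dominated by the right-hand side of~\eqref{sa-32}. For $L^2$, with $r=|x_1-x_2|$, I split $\int_D (\log|y-x_1| - \log|y-x_2|)^2\, dy$ into the union of the two near-diagonal discs $\{|y-x_j|\le 2r\}$, $j=1,2$, and the far complement. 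On the near piece, the inequality $(a-b)^2\le 2(a^2+b^2)$ together with the polar identity $\int_0^a 2s\log^2 s\,ds = a^2\log^2 a - a^2\log a + a^2/2$ evaluated at $a=2r$ delivers a contribution of order $r^2(\log^2 r - \log r + 1)$. On the far piece, the mean-value theorem along the segment $[x_1,x_2]$ yields $|\log|y-x_1|-\log|y-x_2|| \le r/\min(|y-x_1|,|y-x_2|)$, and integrating $r^2/s^2$ in polar coordinates over an annulus of radii in $[2r,\mathrm{diam}\,D]$ produces an additional $O(r^2|\log r|)$ term. Summing proves~\eqref{sa-32}, and~\eqref{sa-320} is extracted by checking which of $|\log r|$ and $1$ dominates according as $r\le e^{-1}$ or $r>e^{-1}$.

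For the lower bound~\eqref{sa-321}, I exploit the logarithmic singularity of $L^2$ near $x_1$ directly on $G_D^2$, rather than on $L$ alone. Choose $\alpha\in(0,1/2)$ small enough that $B_{\alpha r}(x_1)\subset B_1(0)$ for every $x_1\in\bar B_{\rho_0}(0)$ and every $r\le 2\rho_0$ (achievable since $r$ is bounded by $2\rho_0$). For $y\in B_{\alpha r}(x_1)$, the bounds $|y-x_1|\le \alpha r$ and $|y-x_2|\ge (1-\alpha)r$ give $|L_{x_1}^2(y) - L_{x_2}^2(y)|\ge C_2\log((1-\alpha)/\alpha) =: \kappa > 0$, a constant independent of $r$, whereas the pointwise mean-value bound for $S$ established above gives $|S_{x_1}^2(y) - S_{x_2}^2(y)| \le c(1-\rho_0)^{-2}\,r$. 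Hence, for $r$ below a threshold $r_0$ depending only on $\rho_0$ and $\alpha$, the singular part dominates pointwise on $B_{\alpha r}(x_1)$, and $|G_D^2(x_1,y) - G_D^2(x_2,y)| \ge \kappa/2$ there. Integrating over this disc of area $\pi\alpha^2 r^2$ produces $\|G_D^2(x_1,\cdot) - G_D^2(x_2,\cdot)\| \ge (\kappa/2)\alpha \sqrt\pi\,r$, a multiple of $\pi^{-1/2}\cdot r$. For $r\in[r_0, 2\rho_0]$, the ratio $\|G_D^2(x_1,\cdot) - G_D^2(x_2,\cdot)\|/|x_1-x_2|$ is continuous on the compact set $\{(x_1,x_2)\in(\bar B_{\rho_0}(0))^2 : |x_1-x_2|\ge r_0\}$ and strictly positive there by Lemma~\ref{l-s4-1}, hence bounded below by a positive constant; taking the minimum of the two constants yields \eqref{sa-321}.

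The main obstacle is the lower bound: a naive reverse triangle inequality on $L^2$-norms, of the form $\|G^2_{x_1}-G^2_{x_2}\|\ge \|L^2_{x_1}-L^2_{x_2}\|-\|S^2_{x_1}-S^2_{x_2}\|$, is insufficient, because both terms on the right are of order $r$ with constants that need not separate cleanly (the $S$-constant deteriorates as $\rho_0\uparrow 1$). The remedy, built into the plan above, is to work pointwise on a small disc centered at $x_1$ on which the logarithmic singularity of $L^2$ produces an $r$-independent gap that dwarfs the smooth correction $S^2$, and only afterwards to pass to $L^2$-norms by integrating over that disc. Dispatching large separations via Lemma~\ref{l-s4-1} and compactness then closes the argument.
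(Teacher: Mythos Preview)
Your upper-bound argument follows the paper's approach essentially line by line: the same near/far decomposition for $\|L_{x_1}^2-L_{x_2}^2\|^2$, the same polar identity $\int_0^a 2s\log^2 s\,ds=a^2(\log^2 a-\log a+\tfrac12)$, the same mean-value estimate on the far region, and the same gradient bound for $S^2$ (the paper phrases the latter via the complex identification $|y|\,|x-y/|y|^2|=|1-\bar x y|$, but this is cosmetic).

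For the lower bound your route is genuinely different from the paper's. The paper restricts to a small annular \emph{sector} $\bar D_\eta\subset\{\tfrac{\eta}{2}r<|y-x_1|<\eta r,\ |\mathrm{angle}(y-x_1,x_2-x_1)|<\pi/4\}$, controls the sign of $\cos\alpha_\lambda$ in the mean-value representation of $\log|y-x_1|-\log|y-x_2|$, and extracts $\|L_{x_1}^2-L_{x_2}^2\|_\eta^2\ge c\,r^2$; since $\|S_{x_1}^2-S_{x_2}^2\|_\eta^2\le c'\eta^2 r^4$, a single choice of small $\eta$ (depending only on $\rho_0$) gives \eqref{sa-321} for \emph{all} $r\le 2\rho_0$ at once. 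Your pointwise observation that on $B_{\alpha r}(x_1)$ the log-gap $\kappa=C_2\log((1-\alpha)/\alpha)$ is independent of $r$ while the $S$-correction is $O(r)$ is cleaner at the core, but as written needs a separate large-$r$ argument.

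That large-$r$ step has a real (though easily repaired) flaw: your appeal to Lemma~\ref{l-s4-1} is circular, since the paper's proof of that lemma invokes precisely the bound~\eqref{sa-321} you are establishing (its ``case $\lambda=1$'' reduces to $\|G_D^2(x_1,\cdot)-G_D^2(x_2,\cdot)\|>0$ and cites \eqref{sa-321}). Moreover, a compactness argument yields no explicit constant, so the assertion that $\bar C$ is a multiple of $\pi^{-1/2}$ would be lost. Both issues disappear if you make your disc argument uniform in $r$: since $\kappa\to\infty$ as $\alpha\downarrow 0$ while $|S_{x_1}^2(y)-S_{x_2}^2(y)|\le c(1-\rho_0)^{-2}r\le 2c\rho_0(1-\rho_0)^{-2}$ for all admissible $r$, choosing $\alpha\in\bigl(0,\tfrac{1-\rho_0}{2\rho_0}\bigr)$ small enough that $\kappa>4c\rho_0(1-\rho_0)^{-2}$ makes the pointwise bound $|G_D^2(x_1,y)-G_D^2(x_2,y)|\ge\kappa/2$ on $B_{\alpha r}(x_1)$ valid for every $r\le 2\rho_0$, and the resulting constant $\bar C=\tfrac{\kappa}{2}\,\alpha\sqrt\pi=\tfrac{\alpha}{4\sqrt\pi}\log\!\bigl(\tfrac{1-\alpha}{\alpha}\bigr)$ is then explicit and of the claimed form.
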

\begin{proof}
First, we will prove an upper bound for $\|L^2_{x_1}-L^2_{x_2}\|_2$. 
Let $r_{x_1,x_2}= 2|x_1-x_2|$. Assume  $|x_1-x_2|>1$. Then $|y-x_1|\le r_{x_1,x_2}$, for any $|y|\le 1$, and
\beqn
\|L^2_{x_1}-L^2_{x_2}\|^2 \le (2\pi^2)^{-1}[J_1(x_1) + J_2(x_2)],
\eeqn
with 
\begin{align*}
J_1(x_1) &= \int_{\{|y|\leq 1\}\cap\{|y-x_1|\leq r_{x_1,x_2}\}}\log^2 |x_1-y| dy,\\
J_2(x_2) &= \int_{\{|y|\leq 1\}\cap\{|y-x_2|\leq \frac{3r_{x_1,x_2}}{2}\}} \log^2 |x_2-y|dy.
\end{align*}
Using polar coordinates $(r,\theta)$ and a change of variables $\rho=r^2$, we have
\begin{align*}
J_1(x_1)  \le \frac{\pi}{4} \int_0^{r_{x_1,x_2}^2} (\log^2 \rho)\  d\rho
=\pi r_{x_1,x_2}^2\left(\log^2\left[\frac{1}{r_{x_1,x_2}}\right]+\log\left[\frac{1}{r_{x_1,x_2}}\right]+\frac{1}{2}\right),
\end{align*}
where the integral is computed using integration by parts.

Similarly,
\beqn
J_2(x_1) \le c r_{x_1,x_2}^2\left(\log^2\left[\frac{1}{r_{x_1,x_2}}\right]+\log\left[\frac{1}{r_{x_1,x_2}}\right]+\frac{1}{2}\right),
\eeqn
with a constant c which is a multiple of $\pi$ and, consequently
\beq
\label{sa-33}
\|L^2_{x_1}-L^2_{x_2}\|^2 \le C |x_1-x_2|^2\left(\log^2 \left[\frac{1}{|x_1-x_2|}\right] + \log\left[\frac{1}{|x_1-x_2|}\right]+1\right),
%\pi d^2\left(\log^2\frac1d+\log\frac1d+\frac12\right),
\eeq
for some positive constant $C$ which is a multiple of $\pi^{-1}$.

Next, we assume that $|x_1-x_2|\le 1$. We have
\beq
\label{sa-34}
\|L^2_{x_1}-L^2_{x_2}\|^2 \leq \pi^{-2}[J_1(x_1)+J_2(x_2))] + (2\pi^2)^{-1}J_3(x_1,x_2),
\eeq
with
\beqn
 J_3(x_1,x_2) = \int_{\{|y|\leq 1\}\cap \{|y-x_1|> r_{x_1,x_2}\}\}}\left(\log |x_1-y|-\log|x_2-y|\right)^2 dy.
 \eeqn
 Let $\varphi(\lambda)=\log(|\lambda(x_2-y)+(1-\lambda)(x_1-y)|)$, $\lambda\in (0,1)$. Then, 
\beqn
\log|x_2-y| - \log|x_1-y|=\varphi(1)-\varphi(0)=\int_0^1 \varphi'(\lambda)d\lambda.
\eeqn
Denote by $\alpha_\lambda$ the angle between the vectors $x_1-x_2$ and $\lambda(x_2-y)+(1-\lambda)(x_1-y)$. Direct computations show that 
\beqn
\varphi'(\lambda)=\frac{|x_1-x_2|\cos(\alpha_\lambda)}{|\lambda(x_2-y)+(1-\lambda)(x_1-y)|}, 
\eeqn
Hence,
\begin{align*}
& J_3(x_1,x_2)\le |x_1-x_2|^2\\
&\quad \times \int_{\{|y|\leq 1\}\cap \{|y-x_1|> r_{x_1,x_2}\}\}} dy\left(\int_0^1\frac{1}{|\lambda(x_2-y)+(1-\lambda)(x_1-y)|} d\lambda\right)^2.
\end{align*}
On $\{|y-x_1|> r_{x_1,x_2}\}$,
\beqn
|\lambda(x_2-y)+(1-\lambda)(x_1-y)|\geq |y-x_1|-\lambda|x_2-x_1|\ge \frac{|y-x_1|}{2}.
\eeqn
Therefore,
\begin{align}
\label{sa-35}
 J_3(x_1,x_2) &\le 4 |x_1-x_2|^2  \int_{\{|y|\leq 1\}\cap \{|y-x_1|> r_{x_1,x_2}\}} |y-x_1|^{-2} dy \notag\\
& \le 8\pi |x_1-x_2|^2 \log\left[\frac{1}{|x_1-x_2|}\right].
 \end{align}
 From \eqref{sa-33}--\eqref{sa-35}, we have
\beq
\label{sa-36}
\|L^2_{x_1}-L^2_{x_2}\|^2 \leq  C |x_1-x_2|^2\left(\log^2 \frac{1}{|x_1-x_2|} + \log\frac{1}{|x_1-x_2|}+1\right),
\eeq
with a positive constant which is a multiple of $\pi^{-1}$.

For the study of the contribution of $\|S^2_{x_1}-S^2_{x_2}\|_2$ it is useful to identify $\R^2$ with $\mathbb{C}$ (the set of complex numbers) and to consider the following identity:
\beqn
|y|\left\vert x-\frac{y}{|y|^2}\right\vert = \left\vert 1-\bar xy\right\vert,
\eeqn
where $\bar x$ denotes the conjugate of the complex number $x$. By doing so, it is easy to check that
\beq
\label{sa-361}
2\pi\vert \nabla_x S^2_{\cdot}(y)\vert = \vert \nabla_x \log(|1-\bar x y|)\vert \le \frac{|y|}{\sqrt {2}(1-|x|)^2}.
\eeq
By the mean value theorem, this implies,
\begin{align}
\label{sa-362}
\|S^2_{x_1}-S^2_{x_2}\|^2& = (2\pi)^{-2}\int_{|y|\leq 1}|\log|1-\bar{x}_1 y|-\log|1-\bar{x}_2 y||^2 dy\notag
\\& \le (8\pi^2)^{-1} |x_1-x_2|^2\int_{|y|\leq 1}\left(\frac{1}{(1-|x^*|)^2}\right)^2dy, 
\end{align}
with $x^*=\lambda x_1+(1-\lambda)x_2$.
We are assuming $x_1, x_2 \in \bar B_{\rho_0}(0)$ with $\rho_0<1$. Hence, $1-|x^*|\ge 1-\rho_0$ and therefore,
\beq
\label{sa-37}
\|S^2_{x_1}-S^2_{x_2}\|^2 \le \frac{C}{(1-\rho_0)^4}|x_1-x_2|^2,
\eeq
with a constant C which is a multiple of $\pi^{-1}$.

With \eqref{sa-36}, \eqref{sa-37}, we have proved \eqref{sa-32}.

If $|x_1-x_2| \leq e^{-1}$, then 
$\left\vert\log^2(|x_1-x_2|) - \log (|x_1-x_2|) + 1\right\vert \le 3 \log^2(|x_1-x_2|)$,
while if $|x_1-x_2| > e^{-1}$, 
\beqn
\sup_{e^{-1}<|x_1-x_2|\le 2}\left[\left\vert\log^2(|x_1-x_2|) - \log (|x_1-x_2|) + 1\right\vert\right] \le C.
\eeqn
Therefore \eqref{sa-32} clearly implies \eqref{sa-320}.
\hfill{$\square$}

%%%%%%Lower bound

Next, we prove \eqref{sa-321}. Let 
$\eta\in\left(0,\frac{1-\rho_0}{2\rho_0}\right)$. Since $|x_1-x_2|\le 2\rho_0$,  we have $\eta |x_1-x_2|<1-\rho_0$. Let $D_{\eta} = \{y\in D: |y-x_1|<\eta |x_1-x_2|\}$. The choice of $\eta$ implies $D_\eta\subset D$, and then,
\begin{align*}
\|G_D^2(x_1,\cdot)-G_D^2(x_2,\cdot)\|^2 &\ge \|G_D^2(x_1,\cdot)-G_D^2(x_2,\cdot)\|_{\eta}^2\\
&\ge \frac{1}{2}\
\Vert L_{x_1}^2-L_{x_2}^2\Vert_\eta^2
-\Vert S_{x_1}^2-S_{x_2}^2\Vert_\eta^2,
\end{align*}
where $\Vert\cdot\Vert_\eta$ denotes the $L^2$-norm on $D_\eta$.
Similarly as in \eqref{sa-362}, using \eqref{sa-361}, se have
\begin{align}
\label{sa-371}
\Vert S_{x_1}^2-S_{x_2}^2\Vert_\eta^2 & = \int_{D_\eta}(S^2_{x_1}(y)-S^2_{x_2}(y))^2 dy\notag\\
&=|x_1-x_2|^2 \int_{D_\eta}|\nabla_{x^*} S^2_{\cdot}(y)|^2 dy\notag\\
&\le (8\pi^2)^{-1} |x_1-x_2|^2 (1-\rho_0)^{-4}\int_{D_\eta} |y|^2 dy\notag\\
&\le c\pi^{-1}(1-\rho_0)^{-4}\eta^2|x_1-x_2|^4.
\end{align}
We continue the proof by establishing a lower bound for $\Vert L_{x_1}^2-L_{x_2}^2\Vert_\eta^2$. For this, we take a new domain of integration $\bar D_\eta\subset D_\eta$ defined as the intersection of the set 
\beqn
C_\eta = \{y\in D: \frac{\eta}{2}|x_1-x_2|<|y-x_1|<\eta|x_1-x_2|\}
\eeqn
with the points $y\in D_\eta$ such that the angle between the lines joining $x_1$ with $y$ and $x_1$ with $x_2$ lies in the interval $(-\pi/4,\pi/4)$. Then, similarly as in the study of the term $J_3(x_1,x_2)$ above, we obtain
\begin{align*}
\Vert L_{x_1}^2-L_{x_2}^2\Vert_\eta^2 &\ge
 (2\pi)^{-2} \int_{\bar D_{\eta}} \left(\log |x_1-y|-\log|x_2-y|\right)^2 dy\\
 & = (2\pi)^{-2} |x_1-x_2|^2\\
 &\quad \times \int_{\bar D_{\eta}}\left\vert \int_0^1 \frac{cos (\alpha_\lambda)}{|\lambda(x_2-y)+ (1-\lambda)(x_1-y)|} d\lambda\right\vert^2 dy.
\end{align*}
Remember that $\alpha_\lambda$ stands for the angle between the vectors $x_1-x_2$ and $\lambda(x_2-y)+ (1-\lambda)(x_1-y) = x_1-y+\lambda(x_2-x_1)$.  Also observe that, on $\bar D_{\eta}$, we have
$1/\sqrt{2} \le cos (\alpha_\lambda)\le 1$, and  $|y-[x_1+\lambda(x_2-x_1)]|\le |y-x_2|$. Hence, from the above inequalities, we have
\beqn
\Vert L_{x_1}^2-L_{x_2}^2\Vert_\eta^2 
\ge (8\pi^2)^{-1}  |x_1-x_2|^2\int_{\bar D_{\eta}}\frac{dy}{|y-x_2|^2}.
\eeqn
After the change of variables defined by $y\mapsto \frac{1}{2}(y-x_2)$ and then by using polar coordinates, we have
\beqn
\int_{\bar D_{\eta}}\frac{dy}{|y-x_2|^2} = C \pi \int_{\frac{\eta}{4}|x_1-x_2|}^{\frac{\eta}{2}|x_1-x_2|}\frac{dr}{r} = C \log 2.
\eeqn
Thus,
\beq
\label{sa-372}
\Vert L_{x_1}^2-L_{x_2}^2\Vert_\eta^2 \ge C\pi^{-1}  |x_1-x_2|^2.
\eeq
Along with \eqref{sa-371} this yields
\beqn
\|G_D^2(x_1,\cdot)-G_D^2(x_2,\cdot)\|^2  \ge C\pi^{-1} |x_1-x_2|^2 \left(1- (1-\rho_0)^{-4}4\eta^2\right).
\eeqn
Finally, by choosing $\eta\in\left(0,\frac{(1-\rho_0)^2}{2\sqrt 2}\wedge \frac{1-\rho_0}{2\rho_0}\right)$, we see that 
\beqn
\|G_D^2(x_1,\cdot)-G_D^2(x_2,\cdot)\|^2  \ge C\pi^{-1} |x_1-x_2|^2,
\eeqn
proving \eqref{sa-321}.
\hfill{$\square$}
\end{proof}
\smallskip

\begin{obs}
\label{sharpk2}
There is a gap between the upper and lower bounds in \eqref{sa-320}, \eqref{sa-321}, respectively, which means that at least the lower bound is not sharp. The consequences of this fact in the study of the hitting probabilities in Section \ref{ss3-3} have been discussed in the introduction.
\end{obs}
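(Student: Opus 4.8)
Strictly speaking an Observation does not call for a formal proof; still, the content of Remark \ref{sharpk2} can be made precise, and the plan below shows how. The assertion has two parts: that the bounds \eqref{sa-320} and \eqref{sa-321} are not of the same order, and that it is (at least) the lower bound that misses the true order. The first part is immediate: on $\{|x_1-x_2|\le e^{-1}\}$ the right-hand side of \eqref{sa-320} equals a constant times $|x_1-x_2|\,|\log|x_1-x_2||$, and since $\lim_{r\to 0^+} r^{-1}\bigl(r|\log r|\bigr)=+\infty$, no inequality $c\,|x_1-x_2|\,|\log|x_1-x_2||\le\bar C|x_1-x_2|$ can hold for all small $|x_1-x_2|$. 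Hence the exact order of $\delta(x_1,x_2)=\|G_D^2(x_1,\cdot)-G_D^2(x_2,\cdot)\|$ as $|x_1-x_2|\to0$ cannot agree with both, so at least one of the two bounds is not sharp.

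To single out the lower bound, I would prove the sharpened estimate $\delta(x_1,x_2)\ge c\,|x_1-x_2|\,(\log(1/|x_1-x_2|))^{1/2}$ for $|x_1-x_2|$ small, which already exceeds any constant multiple of $|x_1-x_2|$. Writing $G_D^2=L_x^2-S_x^2$ as in \eqref{sa-3111} and using \eqref{sa-37}, one has $\|S_{x_1}^2-S_{x_2}^2\|\le C_0(1-\rho_0)^{-2}|x_1-x_2|$, so by the triangle inequality it suffices to show $\|L_{x_1}^2-L_{x_2}^2\|^2\ge c\,|x_1-x_2|^2\log(1/|x_1-x_2|)$. I would obtain this by enlarging the region of integration used for \eqref{sa-372}: instead of the single dyadic shell $C_\eta$, integrate over $\{y\in D:\ \eta|x_1-x_2|\le|y-x_1|\le\rho_1\}$ intersected with an angular sector of fixed aperture $\pi/2$ about the direction from $x_2$ through $x_1$ (so that $y$ stays closer to $x_1$ than to $x_2$), with $\rho_1<1-\rho_0$ chosen so that the set lies in $D=B_1(0)$ whenever $x_1\in\bar B_{\rho_0}(0)$. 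On this set, exactly as in the computation leading to \eqref{sa-372}, $|\log|x_1-y|-\log|x_2-y||\ge c\,|x_1-x_2|/|y-x_1|$ with a sign that is constant over the sector, and polar coordinates give
\beqn
\|L_{x_1}^2-L_{x_2}^2\|^2 \ge c\,|x_1-x_2|^2\int_{\eta|x_1-x_2|}^{\rho_1}\frac{dt}{t} = c\,|x_1-x_2|^2\log\frac{\rho_1}{\eta|x_1-x_2|}.
\eeqn
Combining with the bound on the harmonic part yields the claimed lower bound on $\delta$, so \eqref{sa-321} is not sharp.

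Finally I would record that this is in fact the exact order. The upper estimate \eqref{sa-34} can be tightened: on the near region $\{|y-x_1|\le r_{x_1,x_2}\}$ the elementary bound $(\log a-\log b)^2\le 2\log^2 a+2\log^2 b$ — responsible for the terms $J_1,J_2$ and hence for the spurious factor $\log^2$ — is wasteful, and the change of variable $y=x_1+|x_1-x_2|z$ shows that this region contributes only $O(|x_1-x_2|^2)$; together with $J_3\lesssim|x_1-x_2|^2\log(1/|x_1-x_2|)$ from \eqref{sa-35} this gives $\|L_{x_1}^2-L_{x_2}^2\|^2\lesssim|x_1-x_2|^2\log(1/|x_1-x_2|)$, whence $\delta(x_1,x_2)\asymp|x_1-x_2|\,(\log(1/|x_1-x_2|))^{1/2}$ — precisely the geometric mean of the lower and upper bounds \eqref{sa-321}, \eqref{sa-320}. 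Thus both bounds are off by the factor $(\log(1/|x_1-x_2|))^{1/2}$, in accordance with the cautious phrasing of the Remark, and the consequences for two-point local non-determinism and for the hitting-probability estimates of Section \ref{ss3-3} are as discussed in the Introduction. The main difficulty is the sharp lower bound $\|L_{x_1}^2-L_{x_2}^2\|^2\gtrsim|x_1-x_2|^2\log(1/|x_1-x_2|)$: one must keep the sign of $\log|x_1-y|-\log|x_2-y|$ constant over a range of scales of logarithmic length — not a single dyadic annulus as in the proof of \eqref{sa-372} — while staying inside $D$.
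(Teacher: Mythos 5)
The paper offers no proof of this Remark: it is stated as a bare observation, and the introduction explicitly says the authors ``do not have a proof'' of the closely related failure of two-point local non-determinism for $k=2$. Your first observation (that the two bounds cannot both be of the correct order as $|x_1-x_2|\to 0$, since $r|\log r|/r\to\infty$) is the whole of what the paper's phrasing actually establishes; the extra assertion that it is the \emph{lower} bound that is off is, in the paper, a conjecture. Your proposal supplies the missing justification and considerably more: it identifies the sharp order $\|G_D^2(x_1,\cdot)-G_D^2(x_2,\cdot)\|\asymp |x_1-x_2|\,(\log(1/|x_1-x_2|))^{1/2}$, showing that \emph{both} bounds miss by the same $(\log)^{1/2}$ factor. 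The two key points are correct and well identified: (a) the paper's upper bound wastes a logarithm on the near region $\{|y-x_1|\lesssim |x_1-x_2|\}$ because it bounds $(\log|x_1-y|-\log|x_2-y|)^2$ by $J_1+J_2$ rather than exploiting the cancellation --- the scaling $y=x_1+|x_1-x_2|z$ shows this region contributes only $O(|x_1-x_2|^2)$, so $J_3\lesssim |x_1-x_2|^2\log(1/|x_1-x_2|)$ dominates; and (b) the paper's lower bound integrates over a single dyadic shell of scale $\eta|x_1-x_2|$, discarding the contribution from all scales between $|x_1-x_2|$ and a fixed $\rho_1$, which is precisely where the logarithm accumulates.

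One small inaccuracy worth flagging in part (b): the pointwise bound $|\log|x_1-y|-\log|x_2-y||\ge c\,|x_1-x_2|/|y-x_1|$ you invoke, with a uniform constant $c$, is only valid for $|y-x_1|\gtrsim |x_1-x_2|$ (near the inner radius $\eta|x_1-x_2|$ with $\eta$ small the left side grows only logarithmically in $1/\eta$ while the right side grows like $1/\eta$). This is harmless: starting the radial integral at $t=|x_1-x_2|$ rather than $\eta|x_1-x_2|$ still yields $\int_{|x_1-x_2|}^{\rho_1}dt/t=\log(\rho_1/|x_1-x_2|)$, and the inner shells contribute only $O(|x_1-x_2|^2)$, so the claimed lower bound $\|L^2_{x_1}-L^2_{x_2}\|^2\gtrsim |x_1-x_2|^2\log(1/|x_1-x_2|)$ survives. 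Subtracting the Lipschitz estimate $\|S^2_{x_1}-S^2_{x_2}\|\le C(1-\rho_0)^{-2}|x_1-x_2|$ from \eqref{sa-37} then gives the sharpened lower bound on $\delta$ for $|x_1-x_2|$ small, and with it a genuine proof of the Remark. Since the paper only conjectures what you prove, this is a legitimate improvement on the paper, not merely a different route to the same result.
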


%%%%%%%Increments k=3
%%%%%%
%In dimension $k=3$ and a similar setting as in Lemma \ref{l-sa-3}, we have the following result.
\begin{lema}\label{l-sa-4}
Let $k=3$ and $D=B_1(0)$. Fix $\rho_0<1$. Then, there exist two positive constants 
\begin{align*}
\tilde C &= \tilde c \left(\frac{1-\rho_0}{2\rho_0}\wedge \frac{1}{19}\wedge (1-\rho_0)^4\right)^{\frac{1}{2}},\\
C &= c\frac{1}{(1-\rho_0)^{2}},
\end{align*}
with $\tilde c$ and $c$ some multiple of $\pi^{-1/2}$, such that 
for any $x_1,\, x_2\in{B}_{\rho_0}(0)$,
\beq
\label{sa-42}
\tilde C |x_1-x_2|^{\frac{1}{2}}\le \|G_D^3(x_1,\cdot)-G_D^3(x_2,\cdot)\|\leq C|x_1-x_2|^{\frac{1}{2}}.
\eeq
\end{lema}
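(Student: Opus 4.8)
The plan is to follow the scheme of Lemma \ref{l-sa-3}, exploiting the decomposition $G_D^3(x,y)=L^3_x(y)-S^3_x(y)$ from \eqref{sa-3111}, where $L^3_x(y)=\frac{1}{4\pi}|x-y|^{-1}$ is the singular part and $S^3_x(y)=\frac{1}{4\pi}\bigl(|y|\,|x-\tfrac{y}{|y|^2}|\bigr)^{-1}$ the image term, which is smooth (indeed harmonic) in $x$ on $D$. For the upper bound I would estimate $\|L^3_{x_1}-L^3_{x_2}\|$ and $\|S^3_{x_1}-S^3_{x_2}\|$ separately and add them. For the singular part, write $r=|x_1-x_2|$ and split $\{|y|\le 1\}$ into the near zone $\{|y-x_1|\le 2r\}$ and the far zone $\{|y-x_1|>2r\}$. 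On the near zone, use $\bigl(\frac{1}{|x_1-y|}-\frac{1}{|x_2-y|}\bigr)^2\le \frac{2}{|x_1-y|^2}+\frac{2}{|x_2-y|^2}$ together with the $\R^3$ identity $\int_{|z|\le\rho}|z|^{-2}\,dz=4\pi\rho$ (the second integrand handled over $\{|y-x_2|\le 3r\}$), which contributes $O(r)$. On the far zone $|x_2-y|\ge\frac12|x_1-y|$, so $\bigl(\frac{1}{|x_1-y|}-\frac{1}{|x_2-y|}\bigr)^2\le \frac{4r^2}{|x_1-y|^4}$, and $\int_{2r<|z|<2}|z|^{-4}\,dz\le \frac{2\pi}{r}$, again contributing $O(r)$. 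Hence $\|L^3_{x_1}-L^3_{x_2}\|\le c\,|x_1-x_2|^{1/2}$ with $c$ a universal multiple of $\pi^{-1/2}$.

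For the image part I would differentiate explicitly. Writing the denominator of $S^3_x(y)$ as $\Phi(x,y)^{1/2}$ with $\Phi(x,y)=1-2x\cdot y+|x|^2|y|^2$ (which equals $|x-y|^2$ when $|y|=1$), one gets $\nabla_x S^3_x(y)=-\frac{1}{4\pi}(|y|^2x-y)\Phi(x,y)^{-3/2}$, and the identity $||y|^2x-y|^2=|y|^2\Phi(x,y)$ collapses this to $|\nabla_x S^3_x(y)|=\frac{|y|}{4\pi}\Phi(x,y)^{-1}\le \frac{|y|}{4\pi(1-|x|\,|y|)^2}\le \frac{1}{4\pi(1-\rho_0)^2}$ for $x\in B_{\rho_0}(0)$, $|y|\le1$. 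By convexity of $B_{\rho_0}(0)$ and the mean value theorem, $|S^3_{x_1}(y)-S^3_{x_2}(y)|\le \frac{|x_1-x_2|}{4\pi(1-\rho_0)^2}$, whence $\|S^3_{x_1}-S^3_{x_2}\|\le c\,(1-\rho_0)^{-2}|x_1-x_2|$ with $c$ a multiple of $\pi^{-1/2}$. Adding the two estimates and using $|x_1-x_2|\le 2\rho_0<2$ to write $|x_1-x_2|\le \sqrt2\,|x_1-x_2|^{1/2}$ gives the upper bound in \eqref{sa-42} with $C$ a multiple of $(1-\rho_0)^{-2}$.

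For the lower bound I would localize around $x_1$: fix $\eta\in\bigl(0,\tfrac{1-\rho_0}{2\rho_0}\wedge\tfrac1{19}\bigr)$ and set $D_\eta=\{y\in D:|y-x_1|<\eta|x_1-x_2|\}$; since $|x_1-x_2|\le 2\rho_0$, the first constraint forces $D_\eta\subset D$. Restricting the norm to $D_\eta$ and using $(a-b)^2\ge\frac12 a^2-b^2$,
\[ \|G_D^3(x_1,\cdot)-G_D^3(x_2,\cdot)\|^2 \ge \tfrac12\,\|L^3_{x_1}-L^3_{x_2}\|_\eta^2-\|S^3_{x_1}-S^3_{x_2}\|_\eta^2, \]
where $\|\cdot\|_\eta$ is the $L^2(D_\eta)$-norm. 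For $\eta<\tfrac12$ and $y\in D_\eta$ one has $|x_1-y|<\eta|x_1-x_2|$ and $|x_2-y|>(1-\eta)|x_1-x_2|$, hence $\frac{1}{|x_1-y|}-\frac{1}{|x_2-y|}\ge\frac{1}{|x_1-x_2|}\bigl(\frac1\eta-\frac1{1-\eta}\bigr)>0$; squaring and integrating over $D_\eta$ (of volume $\tfrac{4\pi}{3}\eta^3|x_1-x_2|^3$) yields $\|L^3_{x_1}-L^3_{x_2}\|_\eta^2\ge c\,\eta\,|x_1-x_2|$ with $c$ a multiple of $\pi^{-1}$. The gradient bound above gives $\|S^3_{x_1}-S^3_{x_2}\|_\eta^2\le \frac{|x_1-x_2|^2}{16\pi^2(1-\rho_0)^4}\,|D_\eta|=O\bigl(\eta^3(1-\rho_0)^{-4}|x_1-x_2|^5\bigr)$, which (using $|x_1-x_2|\le2$) is at most half of $\|L^3_{x_1}-L^3_{x_2}\|_\eta^2$ provided $\eta$ is also of order $(1-\rho_0)^4$. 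Choosing $\eta$ a suitable multiple of $\tfrac{1-\rho_0}{2\rho_0}\wedge\tfrac1{19}\wedge(1-\rho_0)^4$ then leaves $\|G_D^3(x_1,\cdot)-G_D^3(x_2,\cdot)\|^2\ge c\,\eta\,|x_1-x_2|$, i.e.\ \eqref{sa-42} with $\tilde C$ a multiple of $\eta^{1/2}$. The only delicate point is precisely this calibration of the localization radius: it must be small enough that the smooth image term contributes negligibly on $D_\eta$, yet comparable to $|x_1-x_2|$ so that $L^3$ produces a term of the exact order $|x_1-x_2|$; this is what forces $\eta$ to shrink like $(1-\rho_0)^4$ as $\rho_0\to1$ and shapes the constant $\tilde C$. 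Everything else reduces to the explicit formula \eqref{sa-31}, the $\R^3$ volume element in polar coordinates, and the mean value theorem.
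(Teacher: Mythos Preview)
Your proof is correct and follows essentially the same strategy as the paper: the decomposition $G_D^3=L^3-S^3$, the near/far-zone split for $\|L^3_{x_1}-L^3_{x_2}\|$ in the upper bound, the gradient estimate $|\nabla_x S^3_x(y)|\le(4\pi)^{-1}(1-\rho_0)^{-2}|y|$ for the image term, and the localization to $D_\eta$ combined with $(a-b)^2\ge\tfrac12 a^2-b^2$ for the lower bound all match the paper exactly, including the calibration $\eta\sim\tfrac{1-\rho_0}{2\rho_0}\wedge\tfrac1{19}\wedge(1-\rho_0)^4$. The one place you diverge is in bounding $\|L^3_{x_1}-L^3_{x_2}\|_\eta^2$ from below: the paper expands the square as $J_1-2J_2+J_3$ and estimates each integral separately (computing $J_1=4\pi\eta|x_1-x_2|$ exactly, bounding $J_2<4\pi\eta^2|x_1-x_2|$ via $|y-x_2|>\tfrac12|x_1-x_2|$, and $J_3>\tfrac{16\pi}{27}\eta^3|x_1-x_2|$), whereas you use the simpler pointwise inequality $\tfrac{1}{|x_1-y|}-\tfrac{1}{|x_2-y|}>\tfrac{1}{|x_1-x_2|}\bigl(\tfrac{1}{\eta}-\tfrac{1}{1-\eta}\bigr)$ on $D_\eta$ and then multiply by $|D_\eta|$; both yield a lower bound of order $\eta|x_1-x_2|$, and your route is arguably cleaner.
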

\begin{proof}
We fix $x_1, x_2\in \bar{B}_{\rho_0}(0)$, $x_1\ne x_2$, and start by proving the upper bound. For this, we first find a bound from above for $\|L_{x_1}^3-L_{x_2}^3\|_2$, using a similar approach as for $k=2$. 
Let $x_1, x_2$ be distinct points in $B_{\rho_0}(0)$ and set
$r_{x_1x_2}= 2|x_1-x_2|$. Assume  $|x_1-x_2|>1$. Then $|y-x_1|\le r_{x_1x_2}$, for any $|y|\le 1$, and
\beqn
\|L^3_{x_1}-L^3_{x_2}\|^2 \le (8\pi^2)^{-1}[I_1(x_1) + I_2(x_2)],
\eeqn
with 
\begin{align*}
I_1(x_1) &= \int_{\{|y|\leq 1\}\cap\{|y-x_1|\leq r_{x_1x_2}\}}|x_1-y|^{-2}dy,\\
I_2(x_2) &= \int_{\{|y|\leq 1\}\cap\{|y-x_2|\leq \frac{3r_{x_1x_2}}{2}\}}|x_2-y|^{-2}dy.
\end{align*}
Applying the change of variables given by the spherical coordinates yields
\beq
\label{sa-421}
I_1(x_1) + I_2(x_2) \leq 20\pi |x_1-x_2|.
\eeq
Next, we assume that $|x_1-x_2|\le 1$. We have
\beqn
\|L^3_{x_1}-L^3_{x_2}\|^2 \leq (4\pi^2)^{-1}[I_1(x_1)+I_2(x_2))] + (8\pi^2)^{-1}I_3(x_1,x_2),
\eeqn
with
\beqn
 I_3(x_1,x_2) = \int_{\{|y|\leq 1\}\cap \{|y-x_1|> r_{x_1x_2}\}\}}\left(\frac{1}{|x_1-y|}-\frac{1}{|x_2-y|}\right)^2 dy.
 \eeqn
 A direct computation shows that $\left\vert\nabla_x\left(|\cdot-y|^{-1}\right)\right\vert = |x-y|^{-2}$. Using this fact, along with the 
mean value theorem, we obtain
\begin{align*}
I_3(x_1,x_2)& \le |x_1-x_2|^2  \int_{\{|y|\leq 1\}\cap \{|y-x_1|> r_{x_1x_2}\}\}} |x^*-y|^{-4} dy,
\end{align*}
with $x^*= x_1+\lambda(x_2-x_1)$ for some $\lambda\in(0,1)$.

On the set $\{|y-x_1| > r_{x_1x_2}\}$, 
\begin{align}
\label{sa-4210}
|x^*-y|&=|x_1-y+\lambda(x_2-x_1)|\geq |x_1-y|-\lambda|x_2-x_1|\notag\\
&> r_{x_1x_2}\left(1-\frac{\lambda}{2}\right)\geq \frac{r_{x_1x_2}}{2}.
\end{align}
Thus
\begin{align*}
I_3 (x_1,x_2)&\leq |x_1-x_2|^2  \int_{\{\frac{r_{x_1x_2}}{2}\le |x^*-y|\le 2 \}} |x^*-y|^{-4} dy
\\&\leq 4\pi |x_1-x_2|^2 \int_{\frac{r_{x_1x_2}}{2}}^2 r^{-2} dr\\
& \le 4\pi |x_1-x_2|.
\end{align*}
%where in the last inequality we have used that $\frac{r_{x_1x_2}}{2}>\frac{1}{2}$.
Thus, we have proved
\beq
\label{l}
\|L^3_{x_1}-L^3_{x_2}\|^2 \leq \frac{C}{\pi} |x_1-x_2|.
\eeq
By computing $\nabla_x S^3_{\cdot}(y)$, we see that 
\beq
\label{II}
\vert \nabla_x S^3_{\cdot}(y)\vert = 4\pi\vert S^3_x(y)\vert^2 |y| \le (4\pi)^{-1}(1-\rho_0)^{-2} |y|, \forall x\in {B}_{\rho_0}(0).
\eeq
Fix $x_1, x_2 \in \bar{B}_{\rho_0}(0)$. The preceding inequality, along with the mean value theorem yields
\begin{align}
\label{sa-43}
\|S^3_{x_1}-S^3_{x_2}\|^2&=\int_{|y|\leq 1}(S^3_{x_1}(y)-S^3_{x_2}(y))^2 dy\notag
\\&=\int_{|y|\leq 1}|\nabla_{x^*} S^3_{\cdot}(y)|^2|x_1-x_2|^2 dy\notag
\\&\leq (12\pi)^{-1}(1-\rho_0)^{-4}|x_1-x_2|^2,
\end{align}
where $x^*$ is a point lying on the interval determined by $x_1$ and $x_2$.
Together with \eqref{l}, this yields the upper bound in \eqref{sa-42}.
%%%%%%%
%%%%%%% Lower bound

Let $\eta\in\left(0,\frac{1-\rho_0}{2\rho_0}\wedge\frac{1}{2}\right)$. Since $|x_1-x_2|\le 2\rho_0$,  we have $\eta |x_1-x_2|<1-\rho_0$. Let $D_{\eta} = \{y\in D: |y-x_1|<\eta |x_1-x_2|\}$. The choice of $\eta$ implies $D_\eta\subset D$, and then,
\begin{align*}
\|G_D^3(x_1,\cdot)-G_D^3(x_2,\cdot)\|^2 &\ge \|G_D^3(x_1,\cdot)-G_D^3(x_2,\cdot)\|_{\eta}^2\\
&\ge \frac{1}{2}\
\Vert L_{x_1}^3-L_{x_2}^3\Vert_\eta^2
-\Vert S_{x_1}^3-S_{x_2}^3\Vert_\eta^2,
\end{align*}
where $\Vert\cdot\Vert_{\eta}$ denotes the $L^2$-norm on $D_\eta$.

With similar computations as in \eqref{sa-43}, we see that
\begin{align}
\Vert S_{x_1}^3-S_{x_2}^3\Vert_\eta^2 & = \int_{D_\eta}(S^3_{x_1}(y)-S^3_{x_2}(y))^2 dy\notag\\
&=\int_{D_\eta}|\nabla_{x^*} S^3_{\cdot}(y)|^2|x_1-x_2|^2 dy\notag\\
&\le (4\pi)^{-2}(1-\rho_0)^{-4}|x_1-x_2|^2\int_{D_\eta} |y|^2 dy\notag\\
&\le (12\pi)^{-1} (1-\rho_0)^{-4} \eta^3 |x_1-x_2|^5. \label{sa-44}
\end{align}

Next, we prove a lower estimate for $\Vert L_{x_1}^3-L_{x_2}^3\Vert_\eta^2$. Expanding the
square of this norm yields,
\beqn
\Vert L_{x_1}^3-L_{x_2}^3\Vert_\eta^2 = (4\pi)^{-2}\left[J_1-2J_2+J_3\right],
\eeqn
with
\begin{align*}
J_1 = \int_{D_\eta} \frac{dy}{|y-x_1|^2},\ 
J_2 = \int_{D_\eta} \frac{dy}{|y-x_1|Ê|y-x_2|},\ 
J_3 = \int_{D_\eta} \frac{dy}{|y-x_2|^2}.
\end{align*}
With a change of variables to spherical coordinates, we have 
\beqn
 J_1 = 4\pi \int_0^{\eta |x_1-x_2|} dr = 4\pi\eta |x_1-x_2|.
\eeqn
To study $J_2$, we notice that since $\eta<\frac{1}{2}$, we have $|y-x_2|>\frac{|x_1-x_2|}{2}$ for any  $y\in D_\eta$. Indeed, assume that $|y-x_2|\le \frac{|x_1-x_2|}{2}$, for some $y\in D_\eta$, then by the triangular inequality,
\beqn
 |x_1-x_2| \le |x_1-y| + |y-x_2| \le \left(\eta + \frac{1}{2}\right) |x_1-x_2|,
 \eeqn
 which is a contradiction. Hence, by applying spherical coordinates, we have
 \begin{align*}
 J_2 &< \frac{2}{|x_1-x_2|} \int_{D_\eta} \frac{dy}{|y-x_1|} = \frac{8\pi}{|x_1-x_2|} \int_0^{\eta |x_1-x_2|} r dr\\
 & = 4\pi \eta^2 |x_1-x_2|.
\end{align*}
Because $\eta<\frac{1}{2}$, we see that on the set $D_\eta$, $|y-x_2| < (\eta+1)|x_1-x_2| < \frac{3}{2}|x_1-x_2|$. Thus,
\begin{align*}
J_3 > \frac{4}{9}|x_1-x_2|^{-2}\int_{D_\eta} dy = \frac{16\pi}{27}\eta^3|x_1-x_2|.
\end{align*}
The estimates on the terms $J_1, J_2, J_3$ obtained above imply,
\beq
\label{sa-441}
\Vert L_{x_1}^3-L_{x_2}^3\Vert_\eta^2 > (4\pi)^{-1} |x_1-x_2|\eta\ (1-2\eta+\frac{4}{27}\eta^2).
\eeq
Along with \eqref{sa-44}, and since $|x_1-x_2|<2$, we obtain
\begin{align}
&\|G_D^3(x_1,\cdot)-G_D^3(x_2,\cdot)\|^2\notag\\
&\qquad \ge (8\pi)^{-1} |x_1-x_2|   
[\eta\ (1-2\eta+\frac{4}{27}\eta^2)-|x_1-x_2|^4 \eta^3 (1-\rho_0)^{-4}]\notag\\
&\qquad\ge (8\pi)^{-1} |x_1-x_2| [\eta\ (1-2\eta -2^4 \eta^2 (1-\rho_0)^{-4})], \label{sa-442}
\end{align}
for any $\eta\in\left(0, \frac{1-\rho_0}{2\rho_0}\wedge\frac{1}{2}\right)$.

Let $c_1= \frac{1-\rho_0}{2\rho_0}\wedge \frac{1}{19}\wedge (1-\rho_0)^4$. The above computations show that, for any $\eta\in (c_1/2, c_1)$, 
\beqn
\|G_D^3(x_1,\cdot)-G_D^3(x_2,\cdot)\|^2 \ge(8\pi)^{-1} \frac{c_1}{38} |x_1-x_2|.
\eeqn 
This completes the proof of the lower bound in \eqref{sa-42} and of the lemma.
\hfill{$\square$}
\end{proof}

%Choose $\eta\in \left(\frac{(1-\rho_0)^4}{2},(1-\rho_0)^4\right)$. Then $\eta^3 (1-\rho)^{-4}<\eta^2$, and the last expression in \eqref{sa-442} is bounded below by
%\beqn
%(8\pi)^{-1} |x_1-x_2|\eta(1 - 18 \eta).
%\eeqn

%Assume that $\rho_0$ is such that $(1-\rho_0) < \left(\frac{1}{18}\right)^{\frac{1}{4}}$. Then
%$1-18 \eta>1-18(1-\rho_0)^4>0$. Therefore, we have proved that
%beqn
%\|G_D^3(x_1,\cdot)-G_D^3(x_2,\cdot)\|_2^2 \ge (8\pi)^{-1} |x_1-x_2|\frac{(1-\rho_0)^4}{2}(1-18(1-%\rho_0)^4).
%\eeqn
%This ends the proof of the lower bound in \eqref{sa-42}. %\beq
%\label{sa-44}
%\|G^3_D(x_1,\cdot)-G^3_D(x_2,\cdot)\|_2^2\leq (44 \pi + 2 (1-\rho_0)^{-4}) |x_1-x_2|.
%\eeq

\begin{cor} 
\label{c-sa-1}
For $k=1,2,3$, we consider the setting of Lemmas \ref{l-sa-2}, \ref{l-sa-3}, \ref{l-sa-4}, respectively. Then the mapping 
$x\mapsto \sigma_x=\Vert G_D^k(x,\cdot)\Vert$, is H\"older continuous. More precisely, there exists a constant $C>0$ such
that, for any $x_1, x_2\in D$,
\beq
\label{sa-45}
\vert\sigma_{x_1}-\sigma_{x_2}\vert \le C\begin{cases}
\vert x_1-x_2\vert, & k=1,\\
\vert x_1-x_2\vert^{1-\gamma}, & k=2,\\
\vert x_1-x_2\vert^{\frac{1}{2}}, & k=3,
\end{cases}
\eeq
where $\gamma>0$ is arbitrarily small.
\end{cor}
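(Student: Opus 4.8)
The plan is to derive \eqref{sa-45} directly from the upper bounds on the canonical pseudo-metric $\delta$ proved in Lemmas \ref{l-sa-2}, \ref{l-sa-3} and \ref{l-sa-4}. The one general ingredient is the reverse triangle inequality in $L^2(D;\R^d)$: for the relevant points $x_1, x_2$,
\[
\vert\sigma_{x_1}-\sigma_{x_2}\vert = \big\vert\,\Vert G_D^k(x_1,\cdot)\Vert - \Vert G_D^k(x_2,\cdot)\Vert\,\big\vert \le \Vert G_D^k(x_1,\cdot)-G_D^k(x_2,\cdot)\Vert = \delta(x_1,x_2),
\]
so it suffices to bound $\delta(x_1,x_2)$ from above by the right-hand side of \eqref{sa-45} in each dimension.

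For $k=1$ this is immediate: the right inequality in \eqref{sa-21} gives $\delta(x_1,x_2)\le (7b/3)^{1/2}\vert x_1-x_2\vert$. For $k=3$ it is equally immediate from the upper bound in \eqref{sa-42}, which yields $\delta(x_1,x_2)\le C\vert x_1-x_2\vert^{1/2}$ with $C$ the constant of Lemma \ref{l-sa-4}.

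For $k=2$ I would start from the estimate \eqref{sa-320} (equivalently \eqref{sa-32}) on $\bar B_{\rho_0}(0)$ and distinguish the two regimes appearing there. When $\vert x_1-x_2\vert> e^{-1}$ the bound is already linear in $\vert x_1-x_2\vert$, hence controlled by $\vert x_1-x_2\vert^{1-\gamma}$ up to a constant depending on $\rho_0$ (and on $\gamma$). When $\vert x_1-x_2\vert\le e^{-1}$ one absorbs the logarithm using the elementary inequality that, for every $\gamma\in(0,1)$, there is $C_\gamma>0$ with $r\vert\log r\vert\le C_\gamma\, r^{1-\gamma}$ for all $r\in(0,e^{-1}]$, a consequence of $r^{\gamma}\vert\log r\vert\to 0$ as $r\to 0^+$. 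Combining the two cases gives $\delta(x_1,x_2)\le C_\gamma\vert x_1-x_2\vert^{1-\gamma}$, which is \eqref{sa-45} for $k=2$.

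This essentially finishes the argument; the only mild subtleties are the logarithmic absorption just described and the fact that Lemmas \ref{l-sa-3} and \ref{l-sa-4} are stated on $\bar B_{\rho_0}(0)$ rather than on all of $D=B_1(0)$. The latter is harmless for the conclusion used later in the paper, since any compact $K\subset B_1(0)$ is contained in some $\bar B_{\rho_0}(0)$, so the local Hölder bound yields continuity of $x\mapsto\sigma_x$ throughout $D$, and hence the attainment of $\inf_{x\in K}\sigma_x$ and $\sup_{x\in K}\sigma_x$. I do not anticipate any genuine obstacle.
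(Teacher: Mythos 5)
Your proposal is correct and follows essentially the same route as the paper, which also derives the corollary from the reverse triangle inequality combined with the upper bounds in \eqref{sa-21}, \eqref{sa-32}, \eqref{sa-42}. The only difference is that you explicitly carry out the logarithmic absorption $r|\log r|\le C_\gamma r^{1-\gamma}$ for $k=2$ and note the passage from $\bar B_{\rho_0}(0)$ to compacts in $D$, both of which the paper leaves implicit.
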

\begin{proof} This is a consequence of the triangular inequality along with the upper bounds \eqref{sa-21}, \eqref{sa-32}, \eqref{sa-42}.
\hfill{$\square$}
\end{proof}
\begin{obs}
\label{sa-r2}
In connection with numerical approximations of the SPDE \eqref{mildPoisseq} with $d=1$ and $D= (0,1)^k$, $k=1,2,3$,
we find in \cite{GM2006} the following results.
\begin{enumerate}
\item $\sup_{x\in D}\Vert G_D^k(x,\cdot)\Vert < \ + \infty$ (\cite[Lemma 3.3 ]{GM2006}).
\item
For any $\varepsilon>0$ there exists a constant $C=C(k,\varepsilon)$ (depending on $k$ and $\varepsilon$), such that, for any $x_1, x_2\in D$,
 \begin{align*}
 \|G_D^1(x_1,\cdot)-G_D^2(x_2,\cdot)\| &\le C |x_1-x_2|^1,\\
 \|G_D^2(x_1,\cdot)-G_D^2(x_2,\cdot)\| &\le C |x_1-x_2|^{1-\varepsilon},\\
 \|G_D^3(x_1,\cdot)-G_D^3(x_2,\cdot)\| &\le C |x_1-x_2|^{\frac{1}{2}-\varepsilon}.
 \end{align*}
(see \cite[Lemma 3.4 ]{GM2006}). 
\end{enumerate}
The proof uses the development of the Green function with respect to an othonormal basis in $L^2(D)$.
\end{obs}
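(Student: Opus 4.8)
The plan is to exploit the eigenfunction (spectral) expansion of the Green function on the cube $D=(0,1)^k$, which is the strategy alluded to at the end of the Remark. Let $\{e_{\mathbf n}:\mathbf n\in\N^k\}$ be the $L^2(D)$-orthonormal basis of Dirichlet eigenfunctions of $-\Delta$ on $D$, namely $e_{\mathbf n}(x)=2^{k/2}\prod_{i=1}^k\sin(n_i\pi x_i)$, with eigenvalues $\lambda_{\mathbf n}=\pi^2|\mathbf n|^2$ and $|\mathbf n|^2=\sum_{i=1}^k n_i^2$. The Green function has the representation $G_D^k(x,y)=\sum_{\mathbf n\in\N^k}\lambda_{\mathbf n}^{-1}e_{\mathbf n}(x)e_{\mathbf n}(y)$. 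Using the uniform bound $\|e_{\mathbf n}\|_\infty\le 2^{k/2}$ together with the convergence of $\sum_{\mathbf n\in\N^k}\lambda_{\mathbf n}^{-2}=\pi^{-4}\sum_{\mathbf n\in\N^k}|\mathbf n|^{-4}$ --- which holds precisely because $k\le 3$ --- one checks that for every fixed $x\in D$ the series $\sum_{\mathbf n}\lambda_{\mathbf n}^{-1}e_{\mathbf n}(x)e_{\mathbf n}(\cdot)$ converges in $L^2(D)$ with sum $G_D^k(x,\cdot)$ (the Fourier coefficients $\langle G_D^k(x,\cdot),e_{\mathbf n}\rangle=\lambda_{\mathbf n}^{-1}e_{\mathbf n}(x)$ holding for every $x$ by elliptic regularity). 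Parseval's identity then gives
\[
\|G_D^k(x,\cdot)\|^2=\sum_{\mathbf n\in\N^k}\lambda_{\mathbf n}^{-2}\,e_{\mathbf n}(x)^2 .
\]

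The first assertion is then immediate: bounding $e_{\mathbf n}(x)^2\le 2^k$ uniformly in $x$ yields $\sup_{x\in D}\|G_D^k(x,\cdot)\|^2\le 2^k\pi^{-4}\sum_{\mathbf n\in\N^k}|\mathbf n|^{-4}<\infty$. For the increments, the key step is an interpolated bound for the increments of the eigenfunctions. A telescoping decomposition of the product of sines, combined with $|\sin(n_j\pi s)-\sin(n_j\pi t)|\le\pi n_j|s-t|$ and the trivial bound by $2$, gives, for all $x_1,x_2\in D$ and all $\theta\in[0,1]$,
\[
|e_{\mathbf n}(x_1)-e_{\mathbf n}(x_2)|\le C_k\,\min\bigl(1,\,|\mathbf n|\,|x_1-x_2|\bigr)\le C_k\,\bigl(|\mathbf n|\,|x_1-x_2|\bigr)^{\theta},
\]
the last inequality using $s\le s^{\theta}$ for $s\in[0,1]$. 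Substituting this into the Parseval expansion of $\|G_D^k(x_1,\cdot)-G_D^k(x_2,\cdot)\|^2$ produces
\[
\|G_D^k(x_1,\cdot)-G_D^k(x_2,\cdot)\|^2\le C_k^2\,\pi^{-4}\,|x_1-x_2|^{2\theta}\sum_{\mathbf n\in\N^k}|\mathbf n|^{2\theta-4},
\]
and a comparison of $\sum_{\mathbf n\in\N^k}|\mathbf n|^{2\theta-4}$ with $\int_1^\infty r^{k-1}r^{2\theta-4}\,dr$ shows that this series is finite exactly when $\theta<2-\tfrac{k}{2}$. One then concludes by taking $\theta=1$ for $k=1$, $\theta=1-\varepsilon$ for $k=2$, and $\theta=\tfrac12-\varepsilon$ for $k=3$; the constant $C=C(k,\varepsilon)$ absorbs the value of the series, which blows up as $\theta\uparrow 2-\tfrac{k}{2}$.

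The only genuinely delicate point is the one packaged in the first paragraph: justifying that the eigenfunction series represents $G_D^k(x,\cdot)$ \emph{in $L^2(D)$ for every} $x\in D$ (not merely for a.e.\ $x$), so that Parseval's identity may be applied pointwise in $x$ to both $\|G_D^k(x,\cdot)\|^2$ and the increment norm; this is exactly where $k\le 3$ enters, through square-summability of $(\lambda_{\mathbf n}^{-1})$. Once this is in place, the remaining ingredients --- the telescoping estimate on products of sines and the integral comparison for $\sum_{\mathbf n}|\mathbf n|^{-s}$ --- are elementary. For $k=1$ one could instead invoke Lemma \ref{l-sa-2} directly with $b=1$, but the spectral argument handles all three dimensions uniformly and delivers the uniformity in $x$ needed for the first assertion.
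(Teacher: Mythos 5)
Your argument is correct and is precisely the strategy the Remark attributes to \cite{GM2006}: expand the Green function in the Dirichlet eigenbasis of $-\Delta$ on the cube, apply Parseval, and exploit the interpolated increment bound for the eigenfunctions together with the convergence threshold of $\sum_{\mathbf n}|\mathbf n|^{2\theta-4}$. Since the paper only cites the result and names the technique without carrying it out, your proposal is a faithful and accurate reconstruction of that cited proof.
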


\medskip

\subsection{Hitting probabilities}
\label{ss3-3}

Throughout this section, we consider the following setting:
\begin{itemize}
\item Case $k=1$. $D= (0,b)$, $b>0$,  $I$ is a closed interval of $D$ satisfying $d(I,\partial D)=b_0>0$.
\item Case $k=2, 3$. $D= B_1(0)$, $I$ is a compact subset of $D$ satisfying $d(I,\partial D)=d_0>0$.
\end{itemize}

%%%%%%%
%%%%%%%Upper bounds in any dimension
%%%%%%%
\subsubsection{Upper bounds}
\label{sss3-3-1}

In this section, $A$ denotes a non empty Borel set of $\R^d$ and we establish upper bounds of the probability
$\P\{v(I)\cap A\ne \emptyset\}$ in terms of the Hausdorff dimension of $A$. 

\begin{tma}
\label{t-s3-1}
The sets $D\subset \R^k$, $I\subset D$ and $A$ are as above. Then, there exists a constant $C$, depending on 
$D, k, d$, such that
\beq
\label{s3-3333}
\P\{v(I)\cap A\ne \emptyset\} \le C \mathcal{H}_{d-\frac{k}{\xi}}(A),
\eeq
with 
\beq
\label{parameters}
\xi= \begin{cases}
1, & k=1,\\
1-\gamma, & k=2, \\
\frac{1}{2}, & k=3,
\end{cases}
\eeq
where $\gamma>0$ is arbitrarily small.
\end{tma}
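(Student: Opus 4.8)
The plan is to follow the general strategy for upper bounds on hitting probabilities of Gaussian (or more generally, nicely behaved) random fields, as in \cite{DSS2010} and \cite{DKN2007}. First I would reduce to a bounded target set: since $\mathcal{H}_{d-k/\xi}(A)=\mathcal{H}_{d-k/\xi}(A\cap B_N(0))$ up to taking $N\to\infty$ when $d-k/\xi>0$ (and the statement is trivial otherwise, the capacity/measure being infinite or the bound being achieved by a covering argument), it suffices to prove \eqref{s3-3333} for $A$ contained in a fixed compact set $\tilde K\subset\R^d$. On such a set, the density bounds \eqref{s3-5} and \eqref{s3-500} apply uniformly, and by the a priori moment estimate \eqref{p-s1-10} (with $f=g=0$) together with Corollary \ref{c-sa-1}, the process $v$ restricted to $I$ satisfies the Hölder-type increment bound $\E[|v(x_1)-v(x_2)|^p]\le C|x_1-x_2|^{p\xi}$ for all $p\ge 2$, with $\xi$ as in \eqref{parameters}.

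Next I would set up the covering argument. Fix $\epsilon>0$ and let $\{B(z_\ell,r_\ell)\}_\ell$ be a cover of $A$ by balls with $r_\ell\le\epsilon$ and $\sum_\ell r_\ell^{\,d-k/\xi}\le \mathcal{H}_{d-k/\xi}(A)+1$. For each $\ell$, the event that $v(I)$ meets $B(z_\ell,r_\ell)$ implies that there is a point $x\in I$ with $v(x)\in B(z_\ell,r_\ell)$; covering the compact parameter set $I\subset\R^k$ by roughly $r_\ell^{-k/\xi}$ balls of radius $r_\ell^{1/\xi}$ centered at points $x_j$, and using that on each such parameter ball the Hölder continuity of $v$ (a consequence of \eqref{p-s1-10} and Kolmogorov's criterion, with a tail bound on the Hölder constant) forces $v$ to stay within a bounded multiple of $r_\ell$ of $v(x_j)$, one gets
\[
\P\{v(I)\cap B(z_\ell,r_\ell)\ne\emptyset\}\le \sum_{j}\P\{|v(x_j)-z_\ell|\le C r_\ell\}+(\text{small remainder}),
\]
where the remainder comes from the oscillation estimate and can be made negligible by the moment bound for the Hölder constant. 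By \eqref{s3-5}, each term $\P\{|v(x_j)-z_\ell|\le C r_\ell\}=\int_{|z-z_\ell|\le Cr_\ell}p_{v(x_j)}(z)\,dz\le C'\,r_\ell^{\,d}$, uniformly in $j$ and $\ell$. Since the number of $j$'s is $\lesssim r_\ell^{-k/\xi}$, this gives $\P\{v(I)\cap B(z_\ell,r_\ell)\ne\emptyset\}\le C''\,r_\ell^{\,d-k/\xi}$.

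Summing over $\ell$ and using subadditivity of probability,
\[
\P\{v(I)\cap A\ne\emptyset\}\le\sum_\ell \P\{v(I)\cap B(z_\ell,r_\ell)\ne\emptyset\}\le C''\sum_\ell r_\ell^{\,d-k/\xi}\le C''\bigl(\mathcal{H}_{d-k/\xi}(A)+1\bigr),
\]
and letting the cover be chosen so the sum approaches the Hausdorff measure (and handling the trivial ``$+1$'' by noting the left side is at most $1$, or by a standard normalization) yields \eqref{s3-3333}. The main obstacle I expect is the careful control of the oscillation remainder term: one must ensure that the probability that $v$ oscillates by more than $Cr_\ell$ over a parameter ball of radius $r_\ell^{1/\xi}$ is summably small against the covering count $r_\ell^{-k/\xi}$; this requires the Gaussian tail estimate on the modulus of continuity — available here from \eqref{grr}--\eqref{mc1} for $k=2,3$ and from explicit computation for $k=1$ — rather than just the polynomial moment bound \eqref{p-s1-10}. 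A secondary technical point is the case $k=2$, where $\xi=1-\gamma$ introduces the arbitrarily small loss $\gamma$; here one simply absorbs the logarithmic factor in \eqref{sa-320} into the exponent, at the cost of $\gamma$, and the rest of the argument goes through verbatim.
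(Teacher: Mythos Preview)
Your proposal is correct and follows the same underlying covering strategy as the paper, but the paper executes it by invoking a ready-made criterion, namely \cite[Theorem~2.6]{DSS2010}, rather than redoing the covering argument by hand. That criterion requires two inputs: the variance lower bound $\inf_{x\in I}\sigma_x>0$ (already established in \eqref{s3-3}), and an exponential-moment condition on normalized increments over small cubes,
\[
\E\!\left(\int_{R_j^\epsilon}\!\int_{R_j^\epsilon}\exp\!\Bigl\{\tfrac{|v(x)-v(y)|}{|x-y|^\xi}\Bigr\}\,dx\,dy\right)\le C\,\epsilon^{2k/\xi},
\]
which the paper verifies in one line by observing that $\Lambda_{x,y}^{-1}[v(x)-v(y)]$ is standard Gaussian thanks to \eqref{s3-38}. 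Your route replaces this exponential-moment check by appealing to the GRR/entropy modulus-of-continuity bounds \eqref{grr}--\eqref{mc1} to control the oscillation remainder; this is an equivalent way to get Gaussian-type tails on the local oscillation, and indeed it is exactly the kind of input that \cite[Theorem~2.6]{DSS2010} packages. The advantage of the paper's shortcut is brevity and that it avoids tracking the remainder term explicitly; the advantage of your direct argument is that it is self-contained and makes the role of the Gaussian tail transparent. Two minor remarks: the reduction to bounded $A$ is unnecessary here since the density bound \eqref{s3-5} is already uniform in $z\in\R^d$; and the ``$+1$'' at the end is handled in the standard way by taking covers with $\sum_\ell r_\ell^{\,d-k/\xi}\le\mathcal{H}_{d-k/\xi}(A)+\delta$ and letting $\delta\to 0$ (the case $\mathcal{H}_{d-k/\xi}(A)=\infty$ being trivial, as you and the paper both note).
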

\begin{proof}
If $d\le \frac{k}{\xi}$, we have $\mathcal{H}_{d-\frac{k}{\xi}}(A)=\infty$, and \eqref{s3-3333} holds trivially. 

Let $d > \frac{k}{\xi}$. We will apply \cite[Theorem 2.6]{DSS2010} to the process $\{v(x), x\in D\}$, which relies on the following assumptions:
\begin{description}
\item{(i)} $\inf_{x\in K}\sigma_x>0$, for any compact subset $K\subset D$.
\item{(ii)} For any $\epsilon$ small enough, 
\beq
\label{s3-33}
\E\left(\int_{R_j^\epsilon} dx \int_{R_j^\epsilon} dy \left[\exp\left\{\frac{|v(x)-v(y)|}{|x-y|^\xi}\right\}\right]\right) \le C \epsilon^{\frac{2k}{\xi}},
\eeq
where $R_j^\epsilon = \Pi_{l=1}^k\left[j_l \epsilon^{\frac{1}{\xi}}, (j_l+1)\epsilon^{\frac{1}{\xi}}\right)$, $j=(j_1,\ldots,j_k)$, $j_1, \ldots,j_k\in \mathbb{Z}$, and $R_j^\epsilon \cap I\neq \emptyset$.
\end{description}
Property (i) has already been proved. Hence, we put our efforts in proving (ii). 

By the isometry property of the stochastic integral and Lemmas \ref{l-sa-2}, \ref{l-sa-3}, \ref{l-sa-4} (see the upper bounds in \eqref{sa-21}, \eqref{sa-320}, \eqref{sa-42}, respectively), we have
\begin{align}
\label{s3-38}
 \E(\vert v(x)-v(y)\vert^2) & = \Vert G^k_D(x,\cdot) - G^k_D(y,\cdot)\Vert^2\notag\\
 & \le C |x-y|^{2\xi},
 \end{align}
 with $\delta$ given in \eqref{parameters}.
 
This implies 
 \beqn
 \frac{|v^i(x)-v^i(y)|}{|x-y|^\xi} \le C \frac{|v^i(x)-v^i(y)|}{[E(|v^i(x)-v^i(y)|^2)]^{\frac{1}{2}}},
 \eeqn
 $i=1,\ldots,d$. 
 
 Let $\Lambda_{x,y}$ be the covariance matrix of the Gaussian random vector $v(x)-v(y)$, that is, 
 \beqn
 \Lambda_{x,y}=\left(\left[E(\vert v^i(x)-v^i(y)\vert^2)\right]^{\frac{1}{2}}\delta_i^j\right)_{1\le i,j\le d},
 \eeqn
 $\delta_i^j$ being the Kronecker symbol.
 
 The law of the random vector $Z:= \Lambda_{x,y}^{-1} [v(x)-v(y)]$ is $N_d(0, {\text{Id}})$. Consequently,
 \begin{align*}
\E\left(\int_{R_j^\epsilon} dx \int_{R_j^\epsilon} dy \left[\exp\left\{\frac{|v(x)-v(y)|}{|x-y|^\xi}\right\}\right]\right)
& \le C \int_{R_j^\epsilon} dx \int_{R_j^\epsilon} dy \ \E \left[\exp |Z|\right]\\
& \le C \epsilon^{\frac{2k}{\xi}}.
\end{align*}
Hence, \eqref{s3-33} holds. \hfill{$\square$}
\end{proof}

%%%%%%% lower bounds,Gaussian
\subsubsection{Lower bounds}
\label{sss3-3-2}

In this section, we consider the dimensions $k=1, 3$. We refer to the introductory section for remarks relative to the dimension $k=2$.
We have the following result.

\begin{tma}
\label{t-s3-2}
Let $k=1, 3$. Fix $N>0$ and a Borel set $A\subset [-N,N]$. There exists a positive constant $c$ depending on the set $D$ and the parameters $d, N$, such that
\beq
\label{s3-39}
\P\{v(I)\cap A \ne \emptyset\} \ge c\ {\text{Cap}}_{d-\frac{k}{\xi}}(A),
\eeq
with $\xi$ given in \eqref{parameters}.
\end{tma}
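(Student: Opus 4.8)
The approach is to deduce \eqref{s3-39} from the lower bound criterion for hitting probabilities of \cite{DSS2010}, applied to the Gaussian field $\{v(x),\,x\in D\}$; the task then reduces to verifying its hypotheses. Specialized to Gaussian fields, these amount to: (i) a uniform lower bound for the one-point densities $p_{v(x)}$, for $x$ in the compact $I$ and for target values in a compact subset of $\R^d$; (ii) the two-sided commensuration of the canonical pseudo-metric $\delta$ with $|x-y|^\xi$; and (iii) an upper bound for the two-point joint densities of the form
\beq
\label{planbound}
p_{(v(x),v(y))}(z,w) \le \frac{C}{\delta(x,y)^{d}}\exp\!\left(-\frac{|z-w|^2}{C\,\delta(x,y)^2}\right),
\eeq
for $x\ne y$ in $I$ and $z,w$ in a compact subset of $\R^d$. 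Input (i) is \eqref{s3-500}, taking $\tilde K$ to be a closed ball containing $A$; input (ii), for $k=1$ and $k=3$, is precisely Lemmas \ref{l-sa-2} and \ref{l-sa-4}. The core of the argument is \eqref{planbound}.

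To prove \eqref{planbound} I would use that the coordinates $v^1(x),\dots,v^d(x)$ are i.i.d., so that the joint density of $(v(x),v(y))$ factorizes over the $d$ coordinates, each factor being the bivariate Gaussian density of $(v^i(x),v^i(y))$, whose covariance matrix has determinant $\sigma_{x}^2\sigma_{y}^2-\sigma_{x,y}^2=\sigma_x^2\tau_{x,y}^2$. Conditioning $v^i(y)$ on $v^i(x)$, this scalar density equals $p_{v^i(x)}(a)$ times a Gaussian in $b$ with mean $m_{x,y}a$ and variance $\tau_{x,y}^2$. Bounding $p_{v^i(x)}(a)\le C$ by \eqref{s3-5}, writing $b-m_{x,y}a=(b-a)+(1-m_{x,y})a$ and using $(b-m_{x,y}a)^2\ge \tfrac12(b-a)^2-(1-m_{x,y})^2a^2$, one obtains
\beqn
p_{(v^i(x),v^i(y))}(a,b)\le \frac{C}{\tau_{x,y}}\exp\!\left(\frac{(1-m_{x,y})^2a^2}{2\tau_{x,y}^2}\right)\exp\!\left(-\frac{(b-a)^2}{4\tau_{x,y}^2}\right).
\eeqn
Now Lemma \ref{l-s4-2} gives $|1-m_{x,y}|\le C\delta(x,y)$, Lemma \ref{l-s4-3} gives $\tau_{x,y}\le C_2\delta(x,y)$, and the two-point local nondeterminism of $v$ (valid for $k=1,3$), together with Lemma \ref{l-s4-1}, gives $\tau_{x,y}\ge c\,\delta(x,y)>0$ for $x\ne y$; since $a$ ranges over a compact set, the middle exponential is bounded by a constant, the prefactor is $\le C\delta(x,y)^{-1}$, and the last exponential is $\le \exp\!\left(-c(b-a)^2/\delta(x,y)^2\right)$. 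Multiplying the $d$ coordinate estimates yields \eqref{planbound}.

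Granting (i)--(iii), the conclusion follows from the second-moment machinery behind the criterion of \cite{DSS2010}. Given a probability measure $\mu$ on $A$ with finite Bessel--Riesz energy $E_{d-k/\xi}(\mu)$, set $J_\varepsilon=\int_I(2\pi\varepsilon^2)^{-d/2}\int_{\R^d}e^{-|v(x)-z|^2/(2\varepsilon^2)}\,\mu(dz)\,dx$. Input (i) gives $\liminf_{\varepsilon\to0}\E[J_\varepsilon]\ge c_0>0$, while \eqref{planbound} together with the commensuration $\delta(x,y)\asymp|x-y|^\xi$ and the substitutions $\rho=|x-y|$, $s=\rho^\xi$, $t=|z-w|/s$ give $\sup_{\varepsilon}\E[J_\varepsilon^2]\le C\,E_{d-k/\xi}(\mu)$. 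By Paley--Zygmund, $\P\{J_{\varepsilon_n}>0\}\ge c_0^2/(C\,E_{d-k/\xi}(\mu))$ along a sequence $\varepsilon_n\to0$; the a.s. continuity of the paths of $v$ (Proposition \ref{thm2bl3}) together with the compactness of $I$ and of $\supp\mu$ then lets one pass to $\P\{v(I)\cap A\ne\emptyset\}\ge c_0^2/(C\,E_{d-k/\xi}(\mu))$, and optimizing over $\mu$ gives \eqref{s3-39}.

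The main obstacle is input (iii), and within it the lower bound $\tau_{x,y}\ge c\,\delta(x,y)$ (two-point local nondeterminism): this is the estimate that genuinely separates $k=1,3$ from $k=2$, and it is where the two-sided metric bounds of Lemmas \ref{l-sa-2} and \ref{l-sa-4} — as opposed to the gap in Lemma \ref{l-sa-3} — are indispensable. I would establish it by revisiting the lower-bound computations in the proofs of those lemmas: decompose $G_D^k=L^k-S^k$, localize the $L^2$-norm of the increment $G_D^k(x,\cdot)-G_D^k(y,\cdot)$ to a small ball around $x$ on which the singular part $L^k$ dominates, and verify that after removing the component along $G_D^k(x,\cdot)$ the residual variance is still of order $|x-y|^{2\xi}$.
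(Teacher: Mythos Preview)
Your overall framework coincides with the paper's: both invoke the lower-bound criterion of \cite{DSS2010} (there, Theorem 2.1), verify the one-point density positivity via \eqref{s3-500}, and reduce the two-point density estimate \eqref{planbound} to the scalar case by conditioning $v^i(y)$ on $v^i(x)$, arriving at exactly the inequality you wrote (which is \eqref{k1.1} in the paper). You also correctly isolate the crux: the lower bound $\tau_{x,y}\ge c\,\delta(x,y)$.

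Where you diverge is in how that lower bound is obtained. For $k=1$ the paper does not localize: it computes $\sigma_x\sigma_y-\sigma_{xy}$ explicitly from \eqref{GD1} and finds the closed form $\tfrac{(x\wedge y)(b-x\vee y)(x-y)^2}{6b}$, which together with \eqref{sa-10} gives $1-\rho_{x,y}^2\asymp|x-y|^2$ directly. For $k=3$ the paper takes an indirect route that avoids two-point local nondeterminism altogether: it proves the auxiliary Lemma \ref{l-s4-4}, namely $|\sigma_{x_1}^2-\sigma_{x_2}^2|\le C|x_1-x_2|^{1-\zeta}$ for any $\zeta>0$, which under the commensuration \eqref{delta} becomes $|\sigma_{x_1}^2-\sigma_{x_2}^2|\le C\,\delta(x_1,x_2)^{1+\eta}$; this extra smoothness of $x\mapsto\sigma_x^2$ is precisely the hypothesis of \cite[Proposition 3.1]{DSS2010}, which then delivers \eqref{planbound} without ever estimating $\tau_{x,y}$ from below by hand. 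The mechanism behind that proposition is the identity \eqref{sigmas}: once $(\sigma_x-\sigma_y)^2=o(\delta(x,y)^2)$, the factor $\delta^2-(\sigma_x-\sigma_y)^2$ is $\ge c\,\delta^2$, forcing $\tau_{x,y}\ge c\,\delta(x,y)$.

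Your direct localization would also work. On a ball $B_\eta=\{|z-y|<\eta|x-y|\}$ one has $\int_{B_\eta}|G_D^3(y,z)|^2\,dz\asymp\eta|x-y|$ from the singular part $L_y^3$, whereas $|G_D^3(x,z)|\le C|x-y|^{-1}$ on $B_\eta$ (the $S^3$ term being uniformly bounded on $\bar B_{\rho_0}(0)$), so $\int_{B_\eta}|G_D^3(x,z)|^2\,dz\le C\eta^3|x-y|$; since the optimal regression coefficient $\lambda^\ast=\sigma_{x,y}/\sigma_x^2$ is bounded on $I$ by \eqref{s3-3}, choosing $\eta$ small yields $\tau_{x,y}^2\ge\int_{B_\eta}|G_D^3(y,z)-\lambda^\ast G_D^3(x,z)|^2\,dz\ge c\,|x-y|$. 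This is a legitimate alternative; its advantage is that it is conceptually the same computation as the lower bounds in Lemmas \ref{l-sa-2} and \ref{l-sa-4}, while the paper's route via Lemma \ref{l-s4-4} has the virtue of plugging directly into an off-the-shelf proposition and of making transparent, through \eqref{sigmas}, exactly which regularity of $\sigma_x$ is needed.
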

\smallskip

Before giving the proof of this theorem, we observe that from \eqref{sa-21}, \eqref{sa-42} and the definition of the pseudometric $\delta$ given in \eqref{metric}, we have
 \beq
\label{delta}
c|x_1-x_2|^\xi\le\delta(x_1,x_2)\le C |x_1-x_2|^\xi,
\eeq
for some positive constants $c, C$, and for any $x_1,x_2\in I$,
where 
\beqn
\xi=\begin{cases}
1, & k=1\\
\frac{1}{2}, & k=3.
\end{cases}
\eeqn  
\medskip

\noindent{\em Proof of Theorem \ref{t-s3-2}.}  We apply \cite[Theorem 2.1]{DSS2010} to the stochastic process $v$ defined in \eqref{s3-11}. This accounts to check the following statements.
\begin{enumerate}
\item For any $x\in I$, the density function $z\mapsto p_{v(x)}(z)$ is continuous and bounded. Moreover, $p_{v(x)}(z)>0$ for any $z$ on a compact set of $\R^d$.
\item  For any $x_1, x_2 \in I$, $x_1\ne x_2$, the joint density of $(v(x_1), v(x_2))$, $p_{x_1,x_2}$, exists and satisfies this property:

Fix $M>0$. There exists $\gamma, \alpha>0$ such that $\frac{2}{\alpha}(\gamma-k)= d-\frac{k}{\xi}$ ($\xi$ defined in \eqref{parameters}), and 
\beq
\label{s3-53}
p_{x_1,x_2}(z_1,z_2)%& \le \frac{C}{(\delta(x_1,x_2))^d} \exp\left(-\frac{c|z_1-z_2|^2}{(\delta(x_1,x_2))^2}\right)\notag\\
\le \frac{C}{|x_1-x_2|^{\gamma}} \exp\left(-\frac{c|z_1-z_2|^2}{|x_1-x_2|^\alpha}\right),
\eeq
for any $z_1,z_2\in [-M,M]^d$, where $C, c$ are positive constants independent of $x_1, x_2$.
\end{enumerate}

Property 1 follows from \eqref{s3-2}-\eqref{s3-500}. 
Along with Lemma \ref{l-s4-1}, we infer the existence of the joint density $p_{x_1,x_2}$.
\smallskip

\noindent{\em Case k=1}
\smallskip

We fix $i\in\{1,\ldots,d\}$, and denote by
 $p_{x_1,x_2}^i(z_1,z_2)$,  $p_{x_2|x_1}^i(z_2|z_1)$, $p_{x_1}^i(z_1)$ the joint density of $(v^i_{x_1}, v^i_{x_2})$ at $(z_1,z_2)$, the conditional density of $v^i_{x_2}$ at point $z_2$ given $v^i_{x_1}= z_1$, and the marginal density of $v^i_{x_1}$ at $z_1$, respectively. Then, by linear regression,  
 \begin{align*}
p_{x_1,x_2}^i(z_1,z_2) & = p_{x_2|x_1}^i(z_2|z_1)p_{x_1}^i(z_1)\\
&= \frac{1}{\sqrt{2\pi}\tau_{x_1,x_2}} \exp\left(-\frac{|z_2-m_{x_1,x_2}z_1|^2}{2\tau_{x_1,x_2}^2}\right)\notag\\
& \quad \times \frac{1}{\sqrt{2\pi}\sigma_{x_1}} \exp\left(-\frac{|z_1^2|}{2\sigma_{x_1}^2}\right)
\end{align*}
where $m_{x_1,x_2}$, $\tau_{x_1,x_2}^2$ denote the conditional mean and variance, respectively (the definitions are recalled in Lemmas \ref{l-s4-2} and \ref{l-s4-3}).

As in the proof of \cite[Proposition 3.1]{DSS2010}, by simple algebraic manipulations, we obtain
\begin{align}
\label{k1.1}
p_{x_1,x_2}^i(z_1,z_2)& 
\le \frac{1}{2\pi\sigma_{x_1}\tau_{x_1,x_2}}
\exp\left(-\frac{|z_1-z_2|^2}{4\tau_{x_1,x_2}^2}\right)\notag\\
& \quad \times \exp\left(\frac{|z_1|^2|1-m_{x_1,x_2}|^2}{2\tau_{x_1,x_2}^2}
  \right)\exp\left( -\frac{|z_1|^2}{2\sigma_{x_1}^2}\right).
\end{align}
In order to get \eqref{s3-53} (with $\frac{2}{\alpha}(\gamma-1)= d-1$) from \eqref{k1.1}, we will use \eqref{s3-3}, \eqref{s3-46}, and prove that
\beq
\label{s3-23}
c_1|x-y|^2 \le 1-\rho^2_{xy}\le  c_2 |x-y|^2, 
\eeq
for any $x,y\in I$, where $c_1$, $c_2$ are positive constants. 

The upper bound in \eqref{s3-23} follows from \eqref{s3-3} and \eqref{s3-47}, and is valid in any dimension $k$. A complete proof of \eqref{s3-23} in dimension $k=1$ can be done as follows. 

By definition, 
\beqn
1-\rho^2_{xy}= \frac{(\sigma_x\sigma_y-\sigma_{xy})(\sigma_x\sigma_y+\sigma_{xy})}{\sigma_x^2\sigma_y^2}.
\eeqn
Based on the expression \eqref{GD1}, with direct computations we obtain
\beqn
\sigma_{xy} = \frac{xy}{6b}(2b^2-3bx-3by+x^2+y^2) + \frac{xy(x\wedge y)}{2} - \frac{(x\wedge y)^3}{6},
\eeqn
which yields
\beqn
\sigma_x\sigma_y-\sigma_{xy} = \frac{(x\wedge b)(b-(x\vee y))(x-y)^2}{6b}.
\eeqn
From the three equations above and \eqref{s3-3}, we deduce \eqref{s3-23}.

Going back to \eqref{k1.1} and because of the independence of the components $v^i$, the estimates \eqref{s3-23} imply the inequality \eqref{s3-53} with $\gamma=d$, $\alpha=2$.
This proves the lower bound \eqref{s3-39} when $k=1$.
%\newpage

\noindent{\em Case $k=3$}
\smallskip

By Lemma \ref{l-s4-4}, proved later on in this section, and $\eqref{delta}$, we obtain
\beq
\label{s3-52}
\left\vert\sigma_{x_1}^2 - \sigma_{x_2}^2\right\vert \le C \delta(x_1,x_2)^{1+\eta},
\eeq
with some $\eta>0$. This fact, together with (a) and (b) in Section \ref{S3-2} yields that 
the Gaussian stochastic process $\{v(x), x\in I\}$ satisfies the hypotheses of \cite[Proposition 3.1]{DSS2010}. Thus, according to that Proposition, if we fix $M>0$,  for any $x_1, x_2 \in I$, the joint density of $(v(x_1), v(x_2))$ satisfies 
\beqn
%\label{s3-530}
p_{x_1,x_2}(z_1,z_2) \le \frac{C}{(\delta(x_1,x_2))^d} \exp\left(-\frac{c|z_1-z_2|^2}{(\delta(x_1,x_2))^2}\right),
%&\le \frac{C}{|x_1-x_2|^{\frac{d}{2}}} \exp\left(-\frac{c|z_1-z_2|^2}{(|x_1-x_2|}\right),
\eeqn
where $C,c$ are positive constants independent of $x_1,x_2$ and $z_1,z_2\in[-M,M]^d$.
Because of \eqref{delta}, the right-hand side of the above inequality is bounded by 
\beq
\label{s3-530}
%p_{x_1,x_2}(z_1,z_2)%& \le \frac{C}{(\delta(x_1,x_2))^d} \exp\left(-\frac{c|z_1-z_2|^2}{(\delta(x_1,x_2))^2}\right),
\frac{C}{|x_1-x_2|^{\frac{d}{2}}} \exp\left(-\frac{c|z_1-z_2|^2}{|x_1-x_2|}\right).
\eeq
Hence, Property 2. above holds with $\gamma:= \frac{d}{2}$ and $\alpha:= 1$, which according to the conclusion of \cite[Theorem 2.1]{DSS2010} yields  \eqref{s3-39} for $k=3$.
\hfill{$\square$}

\begin{obs}
\label{s3-01}
Theorems \ref{t-s3-1} and \ref{t-s3-2} have the following consequences.

Let $k=1,3$. By the definition of the Hausdorff dimension ${\text{dim}}_{\text H}$ (see e.g. \cite {Ka1993} or \cite{K2002}) and Frostman's theorem, we have
\begin{align*}
{\text{dim}}_{\text H}(A) &< d-\frac{k}{\xi} \Longrightarrow \{v(I) \cap A = \emptyset\}\  a.s.\\
{\text{dim}}_{\text H}(A) &> d- \frac{k}{\xi} \Longrightarrow \P\{v(I) \cap A \neq \emptyset\} >0,
\end{align*}
with $\xi$ defined in \eqref{parameters}.

Moreover, we see that if $d > \frac{k}{\xi}$, points ($A= \{y_0\}$) are polar for $v$, while there are non polar if $d <\frac{k}{\xi}$.

If $d=\frac{k}{\xi}$ ({\em critical dimension}), the results obtained so far for the hitting probabilities are not informative. For example, if $A= \{y_0\}$, it says
\beqn
0\le \P\{\exists x\in I:  v(I)= y_0\}  \le 1.
\eeqn
We refer to \cite{DMX2015} for a method to characterise polarity of points for Gaussian random fields at critical dimensions with applications to the heat and wave stochastic equations.
\end{obs}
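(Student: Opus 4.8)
The plan is to deduce the assertions of the remark directly from Theorems \ref{t-s3-1} and \ref{t-s3-2}, using only the standard relations that link the Hausdorff dimension of a Borel set to its Hausdorff measure (for the first implication) and to its Bessel--Riesz capacity, via Frostman's theorem (for the second). No new probabilistic input is needed.

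First I would establish the top implication. If ${\text{dim}}_{\text H}(A)<d-\frac{k}{\xi}$, then $d-\frac{k}{\xi}>0$ and, by the definition of the Hausdorff dimension, $\mathcal{H}_{s}(A)=0$ for every $s>{\text{dim}}_{\text H}(A)$; in particular $\mathcal{H}_{d-\frac{k}{\xi}}(A)=0$. Plugging this into \eqref{s3-3333} gives $\P\{v(I)\cap A\ne\emptyset\}\le C\,\mathcal{H}_{d-\frac{k}{\xi}}(A)=0$, hence $v(I)\cap A=\emptyset$ almost surely.

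Next I would establish the bottom implication. Assume ${\text{dim}}_{\text H}(A)>d-\frac{k}{\xi}$. Since the Hausdorff dimension is stable under countable unions, from $A=\bigcup_{N\ge 1}\bigl(A\cap[-N,N]^{d}\bigr)$ we obtain ${\text{dim}}_{\text H}\bigl(A\cap[-N_{0},N_{0}]^{d}\bigr)>d-\frac{k}{\xi}$ for some $N_{0}\ge 1$. By Frostman's theorem, ${\text{dim}}_{\text H}(B)=\sup\{\beta\ge 0:\ {\text{Cap}}_{\beta}(B)>0\}$ for any Borel set $B$, so ${\text{Cap}}_{d-\frac{k}{\xi}}\bigl(A\cap[-N_{0},N_{0}]^{d}\bigr)>0$; when $d-\frac{k}{\xi}<0$ this is automatic for every non-empty set, the capacity being then associated with a bounded kernel. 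Using the monotonicity of $A\mapsto\P\{v(I)\cap A\ne\emptyset\}$ together with Theorem \ref{t-s3-2} applied with $N=N_{0}$,
\begin{align*}
\P\{v(I)\cap A\ne\emptyset\}
&\ge\P\bigl\{v(I)\cap\bigl(A\cap[-N_{0},N_{0}]^{d}\bigr)\ne\emptyset\bigr\}\\
&\ge c_{N_{0}}\,{\text{Cap}}_{d-\frac{k}{\xi}}\bigl(A\cap[-N_{0},N_{0}]^{d}\bigr)>0 .
\end{align*}

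Finally, the statements on points follow by taking $A=\{y_{0}\}$, for which ${\text{dim}}_{\text H}(A)=0$: the first implication applies when $d>\frac{k}{\xi}$ (so $0<d-\frac{k}{\xi}$) and yields polarity, while the second applies when $d<\frac{k}{\xi}$ (so $0>d-\frac{k}{\xi}$) and yields non-polarity. At the critical dimension $d=\frac{k}{\xi}$ one has $d-\frac{k}{\xi}=0$: the right-hand side of \eqref{s3-3333} equals $C\,\mathcal{H}_{0}(A)$, which is infinite under the convention adopted in the proof of Theorem \ref{t-s3-1}, so the upper bound reduces to $\P\le 1$; and the lower bound of Theorem \ref{t-s3-2} is $c\,{\text{Cap}}_{0}(\{y_{0}\})=0$, since a single point has vanishing $0$-capacity. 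Thus only the trivial bounds $0\le\P\{\exists x\in I:\ v(x)=y_{0}\}\le 1$ survive. The argument has no substantive obstacle; the only care needed is the bookkeeping at and below the threshold $d=\frac{k}{\xi}$, and the localisation of $A$ to a bounded box, forced by the hypothesis $A\subset[-N,N]^{d}$ in Theorem \ref{t-s3-2}.
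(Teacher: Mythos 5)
Your argument is correct and is exactly the standard unpacking of what the paper leaves implicit: the remark is stated without proof beyond the invocation of the definition of Hausdorff dimension and Frostman's theorem, and your derivation (annihilating the Hausdorff measure above the dimension, using Frostman and a bounded-box localisation below it, and specialising to singletons) is precisely that route. The only care you rightly flag — the localisation to $[-N,N]^d$ needed to invoke Theorem~\ref{t-s3-2}, and the conventions for $\mathcal{H}_0$ and $\operatorname{Cap}_0$ at the critical dimension — is handled consistently with the conventions in the proof of Theorem~\ref{t-s3-1} and in \cite{DSS2010}.
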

\medskip

%\red{Pending: find the harmonizable representation of $v$ and apply the method of \cite{DMX2015} to characterise polar points for $v$}.

In the particular case $k=2$, Theorem \ref{t-s3-1} implies that if $d>2$, points are polar for $v$.
%%%%%%%

We close this section with an auxiliary result used in the proof of Theorem \ref{t-s3-2}.

\begin{lema}
\label{l-s4-4}
Let $k=3$. Fix $\rho_0\in(0,1)$. There exists $C>0$ such that for all $x_1, x_2\in B_{\rho_0}(0)$,
\beq
\label{s3-49}
\left\vert\sigma_{x_1}^2 - \sigma_{x_2}^2\right\vert \le C \left\vert x_1 - x_2\right\vert^{1-\zeta},
\eeq
with $\zeta>0$ arbitrarily small.
\end{lema}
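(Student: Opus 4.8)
\noindent The strategy is to control
\beqn
\sigma_{x_1}^2-\sigma_{x_2}^2=\langle G_D^3(x_1,\cdot)-G_D^3(x_2,\cdot),\, G_D^3(x_1,\cdot)+G_D^3(x_2,\cdot)\rangle,
\eeqn
but \emph{not} through a bare Cauchy--Schwarz estimate: combining Lemma \ref{l-sa-4} with Lemma \ref{l-sa-1} in this identity would only give the bound $C|x_1-x_2|^{1/2}$, which is weaker than \eqref{s3-49}. Instead I would insert the decomposition $G_D^3(x,\cdot)=L^3_x-S^3_x$ from \eqref{sa-3111} and bound the four resulting pieces, distinguishing the singular kernel $L^3_x(y)=C_3|x-y|^{-1}$ from the harmonic part $S^3_x$.

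The pieces involving $S^3$ are Lipschitz and hence harmless. Expanding $\bigl(|y|\,|x-y/|y|^2|\bigr)^2=|x|^2|y|^2-2x\cdot y+1\ge(1-|x||y|)^2\ge(1-\rho_0)^2$ for $x\in\bar B_{\rho_0}(0)$, $y\in D$, yields the uniform bound $\|S^3_x\|_\infty\le C_3(1-\rho_0)^{-1}$. Together with $\|S^3_{x_1}-S^3_{x_2}\|\le C|x_1-x_2|$, already obtained in \eqref{sa-43}, and with $\sup_{x\in D}\|G_D^3(x,\cdot)\|<\infty$ from Lemma \ref{l-sa-1}, the terms $\langle L^3_{x_1}-L^3_{x_2},\,S^3_{x_1}+S^3_{x_2}\rangle$ and $\langle S^3_{x_1}-S^3_{x_2},\,G_D^3(x_1,\cdot)+G_D^3(x_2,\cdot)\rangle$ are $O(|x_1-x_2|)$, once one knows $\int_D|L^3_{x_1}(y)-L^3_{x_2}(y)|\,dy=O(|x_1-x_2|)$.

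It thus remains to estimate $\|L^3_{x_1}\|^2-\|L^3_{x_2}\|^2=C_3^2\bigl(\phi(x_1)-\phi(x_2)\bigr)$, where $\phi(x)=\int_D|x-y|^{-2}\,dy$, together with the $L^1$ integral above. Writing $h=|x_1-x_2|$, I would split $D$ into the ``near'' set $N=\{|y-x_1|<2h\}\cup\{|y-x_2|<2h\}$ and $D\setminus N$. On $N$, polar coordinates give $\int_N|x_i-y|^{-2}dy\le Ch$ and $\int_N|x_i-y|^{-1}dy\le Ch^2$. On $D\setminus N$ one has $|y-x^*|\ge\tfrac12|y-x_1|$ for every point $x^*$ on the segment $[x_1,x_2]$, so the mean value theorem bounds $\bigl||x_1-y|^{-2}-|x_2-y|^{-2}\bigr|$ by $Ch|x_1-y|^{-3}$ and $\bigl||x_1-y|^{-1}-|x_2-y|^{-1}\bigr|$ by $Ch|x_1-y|^{-2}$; integrating over $\{2h<|y-x_1|<2\}$ yields $Ch\log(1/h)$ and $Ch$, respectively. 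Hence $|\phi(x_1)-\phi(x_2)|\le Ch\log(1/h)$ and $\int_D|L^3_{x_1}-L^3_{x_2}|\,dy\le Ch$ (with the bound being trivial when $h$ is not small, since $\phi$ is bounded on $\bar B_{\rho_0}(0)$). Assembling all the estimates gives $|\sigma_{x_1}^2-\sigma_{x_2}^2|\le Ch\log(1/h)\le C_\zeta h^{1-\zeta}$ for every $\zeta>0$, which is \eqref{s3-49}. The only genuine difficulty is the far-region logarithm in $\phi$: it is not an artifact of the method, since $x\mapsto\int_D|x-y|^{-2}dy$ is honestly not Lipschitz, and this is exactly why the exponent in \eqref{s3-49} must be $1-\zeta$ rather than $1$.
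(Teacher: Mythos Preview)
Your proof is correct and in fact slightly sharper than the paper's, but it proceeds along a different line.

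The paper does not perform the algebraic decomposition $G=L-S$ at the level of $\sigma_x^2$. Instead it splits $D$ directly into a near region $D_1=\{|y-x_1|\le 2|x_1-x_2|\}$ and a far region $D_2$. On $D_1$ it uses $0\le G_D^3\le L^3$ and polar coordinates to get $O(|x_1-x_2|)$. On $D_2$ it applies the mean value theorem to the function $x\mapsto (G_D^3(x,y))^2$, obtaining the pointwise factor $G_D^3(x^*,y)\,\nabla_xG_D^3(x^*,y)$, and then controls $\int_{D_2}|G||\nabla G|$ by H\"older's inequality with a free exponent $\gamma\in(0,3)$. Optimising in $\gamma$ (equivalently in $\bar\gamma=\gamma/(\gamma-1)>3/2$) is what produces the exponent $1-\zeta$ with $\zeta>0$ arbitrarily small; the $\zeta$ is an artefact of the H\"older step.

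You instead isolate the only genuinely singular piece, $\phi(x)=\int_D|x-y|^{-2}\,dy$, by first expanding $G=L-S$ and showing that every term containing an $S$-factor is $O(|x_1-x_2|)$ (via the uniform bound $\|S^3_x\|_\infty\le C_3(1-\rho_0)^{-1}$, the Lipschitz estimate \eqref{sa-43}, and your $L^1$ bound on $L^3_{x_1}-L^3_{x_2}$). You then handle $\phi(x_1)-\phi(x_2)$ with the same near/far split but apply the mean value theorem to $|x-y|^{-2}$ directly, so the far-region integral is $\int_{2h}^2 r^{-1}\,dr\sim\log(1/h)$. This avoids H\"older altogether and yields the explicit bound $C|x_1-x_2|\log(1/|x_1-x_2|)$, which implies \eqref{s3-49} and pinpoints the source of the loss. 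The trade-off is that the paper's argument works uniformly on $G$ without needing to separate $L$ and $S$, whereas your argument is more elementary but requires verifying each cross term.
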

\begin{proof}
Let $r_{x_1,x_2}= 2\left\vert x_1 - x_2\right\vert$ and let
\begin{align*}
D_1 & = D\cap \{|y-x_1|\le r_{x_1,x_2}\},\\
D_2 & = D\cap \{|y-x_1| > r_{x_1,x_2}\}.
\end{align*}
By definition,
\begin{align*}
\sigma_{x_1}^2 - \sigma_{x_2}^2 & = \int_{D_1} \left[|G_D^3(x_1,y)|^2 - |G_D^3(x_2,y)|^2 \right] dy\\
& + \int_{D_2} \left[|G_D^3(x_1,y)|^2 - |G_D^3(x_2,y)|^2 \right] dy.
\end{align*}
Consider the expression of $G_D^3$ given in \eqref{sa-31}. As observed in \cite[p. 19]{GT2001},  
for all $x,y\in \bar D$, $G_D^3(x,y)\ge 0$ (notice that in the notation of that reference, $G_D^3 (x,y) = - G(x,y)$). Hence, with the notation  
\eqref{sa-311}, we have $S_x^3(y) \le L_x^3(y)$, for any $x,y\in \bar D$. Therefore, 
\begin{align}
\label{sa-50}
 &\left\vert\int_{D_1} \left[|G_D^3(x_1,y)|^2 - |G_D^3(x_2,y)|^2 \right] dy\right\vert  
 \le 2 \int_{D_1} \left[(L_{x_1}^3(y))^2 + L_{x_2}^3(y))^2\right] dy\notag \\
 &\quad\quad\le C \left(\int_{D_1} |x_1-y|^{-2} dy + \int_{D_1} |x_2-y|^{-2} dy\right)\notag \\
 &\quad\quad\le C \left\vert x_1 - x_2\right\vert,
 \end{align} 
 where the last inequality follows from \eqref{sa-421}.
 
 Our next aim is to find an upper bound for 
 \beqn
 \left\vert\int_{D_2} \left[|G_D^3(x_1,y)|^2 - |G_D^3(x_2,y)|^2 \right] dy\right\vert.
 \eeqn
  For this, we apply the mean value theorem to the function
$x\mapsto (G_D^3(x,y))^2$ and obtain
\beqn
(G_D^3(x_1,y))^2 - (G_D^3(x_2,y))^2 = 2 G_D^3(x^*,y) \nabla_x G_D^3(x^*,y)(x_1-x_2),
\eeqn
where $x^*= \lambda x_1 + (1-\lambda) x_2$, for some $\lambda\in (0,1)$. This yields
\begin{align*}
&\left\vert\int_{D_2} \left[|G_D^3(x_1,y)|^2 - |G_D^3(x_2,y)|^2 \right] dy\right\vert
\le C \left\vert x_1 - x_2\right\vert\\
& \quad\times \int_{D_2} \vert G_D^3(x^*,y)\vert \vert  \nabla_x G_D^3(x^*,y)\vert dy.
\end{align*}
For all $x\in D$ and $\gamma\in(0, 3)$, the integral $\int_D |G_D^3(x,y)|^\gamma dy$ is finite.
Apply H\"older's inequality with $\gamma\in(0, 3)$, $\bar\gamma= \frac{\gamma}{\gamma-1}$ (
observe that $\bar\gamma>\frac{3}{2}$). We obtain
\begin{align*}
Z(x^*)&: = \int_{D_2} \vert G_D^3(x^*,y)\vert \vert  \nabla_x G_D^3(x^*,y)\vert dy\\
&\le \left(\int_{D_2} |G_D^3(x^*,y)|^\gamma dy\right)^{\frac{1}{\gamma}}
\left(\int_{D_2} \vert  \nabla_x G_D^3(x^*,y)\vert^{\bar\gamma} dy\right)^{\frac{1}{\bar\gamma}}\\
&\le C \left(\int_{D_2} \vert  \nabla_x G_D^3(x^*,y)\vert^{\bar\gamma} dy\right)^{\frac{1}{\bar\gamma}}.
\end{align*}
We pursue the proof with the study of 
\beqn
Y(x^*):=\int_{D_2} \vert  \nabla_x G_D^3(x^*,y)\vert^{\bar\gamma} dy.
\eeqn
Using the expression \eqref{sa-3111}, we see that
\beqn
Y(x^*) \le C \left( \int_{D_2} \vert  \nabla_x L_{x^*}^3(y)\vert^{\bar\gamma} dy
+ \int_{D_2} \vert  \nabla_x S_{x^*}^3(y)\vert^{\bar\gamma} dy\right).
\eeqn
Since 
\beqn
 \vert \nabla_x L_{x}^3(y)\vert = \vert \nabla_x (|x-y|^{-1})\vert = |x-y|^{-2},
 \eeqn
 and on the set $D_2$, we have $|x^* -y|\ge |x_1-x_2|$ (see \eqref{sa-4210}), we obtain
 \begin{align*}
 \int_{D_2} \vert\nabla_x L_{x^*}^3(y)\vert^{\bar\gamma} dy
 &=  \int_{D_2} |x^*-y|^{-2\bar\gamma} dy\\
 &\le C \int_{|x_1-x_2|}^3 r^{2-2\bar\gamma} dr\\ 
 &= \frac{C}{2\bar\gamma-3}\left(\frac{1}{|x_1-x_2|^{2\bar\gamma-3}} - \frac{1}{3^{2\bar\gamma-3}}\right)\\
&\le C |x_1-x_2|^{3-2\bar\gamma},
 \end{align*}
 since $3-2\bar\gamma<0$.
 By using \eqref{II}, we have
 \begin{align*}
 \int_{D_2} \vert  \nabla_x S_{x^*}^3(y)\vert^{\bar\gamma} dy \le C \int_{B_2(0)} |y|^{\bar\gamma} dy < \infty.
 \end{align*}
 Consequenly, we have proved that
 \begin{align}
 \label{sa-51}
 \left\vert\int_{D_2} \left[|G_D^3(x_1,y)|^2 - |G_D^3(x_2,y)|^2 \right] dy\right\vert
 &\le C \left(|x_1-x_2| + |x_1-x_2|^{1+\frac{3-2\bar\gamma}{\bar\gamma}}\right)\notag\\
 &\le C |x_1-x_2|^{1+\frac{3-2\bar\gamma}{\bar\gamma}},
 \end{align}
 because $3-2\bar\gamma<0$.
 
 The upper bound \eqref{sa-51}, along with \eqref{sa-50} implies
 \beqn
 \left\vert\sigma_{x_1}^2 - \sigma_{x_2}^2\right\vert \le C \left\vert x_1 - x_2\right\vert^{1+\frac{3-2\bar\gamma}{\bar\gamma}}.
 \eeqn
 By choosing $\gamma\in(0,3)$ arbitrarily close to $3$, we have $\bar\gamma>\frac{3}{2}$ and arbitrarily close to $\frac{3}{2}$.  Thus, the exponent $1+\frac{3-2\bar\gamma}{\bar\gamma}= \frac{3}{\bar\gamma}-1$ will be less than, but arbitrarily close to $1$. Hence, there exists $\eta>0$ such that \eqref{s3-49} holds. 
\hfill{$\square$}
\end{proof}

%%%%%%
\medskip

\noindent{\bf Acknowledgements}. 
The authors express their gratitude to Joan Verdera, for useful guidance concerning the results of Section \ref{sa}.

%%%%%%%%%%%%references

\end{document}